\newtheorem{theorem}{Theorem}
\newtheorem{proposition}{Proposition}
\newtheorem{lemma}{Lemma}
\newtheorem{corollary}{Corollary}
\DeclarePairedDelimiter\ceil{\lceil}{\rceil}
\newcommand{\norm}[1]{\left \lVert #1 \right\rVert}
\def\<#1,#2>{\langle #1,#2\rangle}
\newcommand{\eqdef}{\stackrel{\text{def}}{=}}
\newcommand{\R}{\mathbb{R}}
\newcommand{\bN}{\mathbb{N}}
\newcommand{\cX}{\mathcal{X}}
\newcommand{\dom}{\operatorname{dom}}
\newcommand{\dist}{\operatorname{dist}}
\newcommand{\prox}{\operatorname{prox}}
\newcommand{\tr}{\operatorname{trace}}
\newtheorem{ass}{Assumption}
\newtheorem{remark}{Remark}
\author{Olivier Fercoq\thanks{
	    T\'el\'ecom ParisTech, Universit\'e Paris-Saclay, Paris,  France,
              (\texttt{olivier.fercoq@telecom-paristech.fr})} \and Zheng Qu\thanks{Department of Mathematics, The University of Hong Kong,
Hong Kong, China,
              (\texttt{zhengqu@hku.hk})}
}
\newcommand{\TheTitle}{Adaptive restart of accelerated gradient methods under local quadratic growth condition}
\title{{\TheTitle}}
\begin{document}

\maketitle

\begin{abstract}
By analyzing accelerated proximal gradient methods under a local quadratic growth condition, we show that restarting these algorithms at any
frequency gives a globally linearly convergent algorithm. This result was previously
 known only for long enough frequencies. 

Then, as the rate of convergence depends on the match between the frequency and the quadratic error bound, we design
a scheme to automatically adapt the frequency of restart from
the observed decrease of the norm of the gradient mapping.
Our algorithm has a better theoretical bound than previously proposed
methods for the adaptation to the quadratic error bound of the objective.

We illustrate the efficiency of the algorithm on a Lasso problem and on a regularized logistic regression problem.
\end{abstract}

\section{Introduction}


\subsection{Motivation}

The proximal gradient method aims at minimizing composite
convex functions of the form $$F(x) = f(x) + \psi(x),\enspace x\in \R^n$$
where $f$ is differentiable with Lipschitz gradient 
and $\psi$ may be nonsmooth but has an easily computable proximal operator.  For a mild additional computational cost, accelerated gradient methods transform 
the proximal gradient method, for which the optimality gap 
$F(x_k) - F^\star$ decreases as $O(1/k)$, into an algorithm with 
``optimal'' $O(1/k^2)$ complexity~\cite{nesterov1983method}.  
Accelerated variants include the
dual accelerated proximal gradient~\cite{Nesterov05:smooth, nesterov2013gradient}, the accelerated proximal gradient method (APG)~\cite{tseng2008accelerated}
and FISTA~\cite{beck2009fista}. 
Gradient-type methods, 
also called first-order methods, 
are often used to solve large-scale problems  because of their good scalability and easiness
of implementation that facilitates parallel and distributed computations.


When solving a convex problem whose objective function satisfies a
local quadratic error bound (this is a generalization of strong convexity), classical
(non-accelerated) gradient and coordinate descent methods
automatically have a linear rate of convergence,
i.e. $F(x_k) - F^\star \in O((1-\mu)^k)$
for a problem dependent $0<\mu<1$~\cite{Necoara:Coupled,DrusvyatskLewis}, whereas one needs to know explicitly the strong convexity parameter in order
to set accelerated gradient and accelerated coordinate descent methods to have a linear rate of convergence, 
see for instance~\cite{lee2013efficient,UniversalCatalyst,lin2014accelerated,Nesterov:2010RCDM,nesterov2013gradient}. 
Setting the algorithm with an incorrect parameter may result in a slower algorithm, sometimes even slower than if
we had not tried to set an acceleration scheme~\cite{o2012adaptive}.
This is a major drawback of the method because in general, the strong convexity parameter is
difficult to estimate. 

In the context of accelerated gradient method with unknown strong convexity parameter, Nesterov~\cite{nesterov2013gradient} proposed a restarting scheme which adaptively approximates the strong convexity parameter. The same idea was exploited by Lin and Xiao~\cite{Lin2015} for sparse optimization.
Nesterov~\cite{nesterov2013gradient} also showed that,
instead of deriving a new method designed to work better for strongly convex functions,
one can restart the accelerated gradient method and get a linear convergence rate. However, the restarting frequency he proposed still depends explicitly on the strong convexity
of the function and so  O'Donoghue and Candes~\cite{o2012adaptive} introduced some heuristics to adaptively restart the 
algorithm and obtain good results in practice. 


\subsection{Contributions}
 
In this paper, we show that, if the objective function is convex and
satisfies a local quadratic error bound, we can restart accelerated  gradient methods
at {\em any} frequency and get a linearly convergent algorithm. 
The rate depends on an estimate of the quadratic error bound
and we show that for a wide range of this parameter,
one obtains a faster rate than without acceleration.
In particular, we do not require this estimate
to be smaller than the actual value.
In that way, our result supports and explains the practical success of arbitrary periodic restart 
for accelerated gradient methods.


Then, as the rate of convergence depends on the match between the frequency and the quadratic error bound, we design
a scheme to automatically adapt the frequency of restart from
the observed decrease of the norm of the gradient mapping.
The approach follows the lines of
\cite{nesterov2013gradient,Lin2015,LiuYang17}. We proved that, if our
current estimate of the local error bound were correct, the 
norm of the gradient mapping would decrease at a prescribed rate.
We just need to check this decrease and when the test fails, we have 
a certificate that the estimate was too large.

Our algorithm has a better theoretical bound than previously proposed
methods for the adaptation to the quadratic error bound of the objective.
In particular, we can make use of the fact that we know that the norm of
the gradient mapping will decrease even when we had a wrong estimate 
of the local error bound.

In Section~\ref{sec:problem} we recall
the main convergence results for accelerated gradient methods
and show that a fixed restart leads to a linear convergence rate.
In Section~\ref{sec:restart1}, we present our adaptive restarting rule. Finally, we present numerical experiments
on the lasso and logistic regression problem in Section~\ref{sec:expe}.


\section{Accelerated gradient schemes}
\label{sec:problem}

\subsection{Problem and assumptions}
We consider the following optimization problem:
\begin{equation}\label{eq-prob}
\begin{array}{ll}
 \displaystyle\min_{x\in \R^n} & F(x):=f(x)+\psi(x),
\end{array}
\end{equation}
where $f:\R^n\rightarrow \R$ is a differentiable convex function and $\psi:\R^n\rightarrow \R\cup\{+\infty\}$ is a proper, closed  and convex function. We denote by $F^\star$ the optimal value of~\eqref{eq-prob} and assume that the optimal solution set $\cX_\star$ is nonempty. Throughout the paper $\|\cdot\|$ denotes the Euclidean norm. 
For any positive vector $v\in \R^n_+$, we denote by $\|\cdot\|_v$ the weighted  Euclidean norm:
 \[\|x\|_v^2 \eqdef \sum_{i=1}^n v_i (x^i)^2,\]
and $\dist_v(x, \cX_\star)$ the distance of $x$ to the closed convex set $\cX_\star$ with respect to the norm
$\|\cdot\|_v$. 
  In addition we assume that $\psi$ is simple, in the sense that the proximal operator defined as
\begin{align}\label{a:proxoperator}
\prox_{v, \psi}(x):=\arg\min_{y} \left\{\frac{1}{2}\|x-y\|_v^2+\psi(y) \right\},
\end{align}
is easy to compute, for any positive vector $v\in \R_+^n$. We also make the following  \textit{smoothness} and \textit{local quadratic error bound} assumption.
\begin{ass}\label{ass1}
There is a positive vector $L\in \R_+^n$ such that
\begin{align}\label{a:smoothness}
f(x)\leq f(y)+\<\nabla f(y), x-y>+\frac{1}{2}\|x-y\|^2_L,\enspace \forall x,y \in \R^n.
\end{align}
\end{ass}

\begin{ass}\label{ass2}
For any $x_0\in \dom(F) $, there is $\mu_F(L,x_0)>0$ such that 
\begin{align}\label{a:strconv2}
F(x)\geq F^\star +\frac{\mu_F(L,x_0)}{2}\dist^2_L(x, \cX_\star), \enspace \forall x\in [F\leq F(x_0)],
\end{align}
where $[F\leq  F(x_0)]$ denotes the set of all $x$ such that $F(x)\leq F(x_0)$.
\end{ass}

Assumption~\ref{ass2} is also referred to as \textit{local quadratic growth condition}. It is known that Assumption~\ref{ass1} and~\ref{ass2} guarantee the linear convergence of proximal gradient method~\cite{Necoara:Coupled,DrusvyatskLewis} with complexity bound $O(\log(1/\epsilon)/\mu_F(L,x_0))$.

\subsection{Accelerated gradient schemes}

 We first recall in Algorithm~\ref{FISTA} and~\ref{APG}
two classical accelerated proximal gradient schemes. For identification purpose we refer to them respectively as FISTA (Fast Iterative Soft Thresholding Algorithm)~\cite{beck2009fista} and
APG (Accelerated Proximal Gradient) \cite{tseng2008accelerated}. As pointed out in~\cite{tseng2008accelerated}, the accelerated schemes were first proposed by Nesterov~\cite{NesterovBook}.

\begin{algorithm}
\begin{algorithmic}[1]
\STATE{Set $\theta_0 = 1$ and $z_0 = x_0$}.
\FOR{$k= 0,1,\cdots,K-1$} 
\STATE $y_k=(1-\theta_k)x_k+\theta_k z_k$ \\
\STATE
$x_{k+1} = \arg\min_{x\in \R^n} \big\{\< \nabla f(y_k), x-y_k>+\frac{1}{2} \|x-y_k\|^2_L+\psi(x) \big\}$ \\
\STATE $z_{k+1}=z_k+ \frac{1}{\theta_k}(x_{k+1}-y_k)$\\
\STATE $\theta_{k+1}=\frac{\sqrt{\theta_k^4 + 4 \theta_k^2} - \theta_k^2}{2}$
\ENDFOR
\STATE $\hat x\leftarrow x_{k+1}$
\end{algorithmic}
\caption{$\hat x\leftarrow$FISTA$(x_0,K)$~\cite{beck2009fista}}\label{FISTA}
\end{algorithm}
\begin{algorithm}
\begin{algorithmic}[1]
\STATE{Set $\theta_0 = 1$ and $z_0 = x_0$}.
\FOR{$k=0,1,\cdots,K-1$} 
\STATE $y_k=(1-\theta_k)x_k+\theta_k z_k$ \\
\STATE
$
z_{k+1}
=\arg\min_{z\in \R^n} \big\{\< \nabla f(y_k), z-y_k>+\frac{\theta_k}{2} \|z-z_k\|^2_L+\psi(z) \big\}
$ \\
\STATE $x_{k+1}=y_k+ \theta_k(z_{k+1}-z_k)$\\
\STATE $\theta_{k+1}=\frac{\sqrt{\theta_k^4 + 4 \theta_k^2} - \theta_k^2}{2}$
\ENDFOR
\STATE $\hat x\leftarrow x_{k+1}$
\end{algorithmic}
\caption{$\hat x\leftarrow$APG$(x_0,K)$~\cite{tseng2008accelerated}}
\label{APG}
\end{algorithm}

\begin{remark}
We have written the algorithms in a unified framework to emphasize their similarities.
Practical implementations usually consider only two variables: $(x_k, y_k)$ for FISTA and
$(y_k, z_k)$ for APG . 
\end{remark}

\begin{remark}
The vector $L$ used in the proximal operation step (line 4) of Algorithm~\ref{FISTA} and~\ref{APG} should satisfy Assumption~\ref{ass1}. If such $L$ is not known a priori, we can incorporate a line search procedure, see for example~\cite{Nesterov:2007composite}.
\end{remark}

The most simple restarted accelerated gradient method has a \textit{fixed restarting frequency}. This is Algorithm~\ref{alg:fixedrestart}, which restarts periodically Algorithm~\ref{FISTA} or Algorithm~\ref{APG}. Here, the restarting period is fixed to be some integer $K\geq 1$. In Section~\ref{sec:restart1} we will also consider \textit{adaptive restarting frequency} with a varying restarting period.
\begin{algorithm}
\begin{algorithmic}[1] 
\FOR{ $t=0,1,\cdots,$}
\STATE $x_{(t+1)K}\leftarrow \mbox{APG}(x_{tK}, K)$ or $x_{(t+1)K}\leftarrow \mbox{FISTA}(x_{tK}, K)$
\ENDFOR
\end{algorithmic}
\caption{FixedRES($x_0$, $K$)}\label{alg:fixedrestart}
\end{algorithm}

\subsection{Convergence results for accelerated gradients methods}

\subsubsection{Basic results}
\label{sec:convgrad}
In this section we gather a list of known results, shared by FISTA and APG, which will be used later to build restarted methods. Although all the results presented in this subsection have been proved or can be derived easily from existing results, for completeness all the proof  is given in Appendix. We first recall the following properties on the sequence $\{\theta_k\}$. 
\begin{lemma}\label{l:thetak}
The sequence $(\theta_k)$ defined by $\theta_0 = 1$ and $\theta_{k+1} = \frac{\sqrt{\theta_k^4 + 4 \theta_k^2} - \theta_k^2}{2}$ satisfies
\begin{align}&\label{athetabd}\frac{1}{k+1} \leq \theta_k \leq \frac{2}{k+2}
\\\label{arectheta}
&\frac{1-\theta_{k+1}}{\theta_{k+1}^2}=\frac{1}{\theta_k^2},\enspace \forall k=0,1,\dots
\\& \theta_{k+1}\leq \theta_k, \enspace \forall k =0,1,\dots, \label{atheradecr}
\end{align}
\end{lemma}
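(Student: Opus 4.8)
The plan is to establish the three claims about the sequence $(\theta_k)$ in order, starting from the defining recurrence $\theta_{k+1} = \frac{\sqrt{\theta_k^4 + 4\theta_k^2} - \theta_k^2}{2}$. The cleanest entry point is the middle identity~\eqref{arectheta}, because it is purely algebraic and the other two bounds follow from it. First I would observe that the recurrence can be rewritten as a fixed-point-style relation: $\theta_{k+1}$ is the positive root of the quadratic $\theta_{k+1}^2 + \theta_k^2\theta_{k+1} - \theta_k^2 = 0$. Indeed, completing the square in $\theta_{k+1}^2 = \theta_k^2(1 - \theta_{k+1})$ and solving by the quadratic formula returns exactly the stated closed form, so the two descriptions are equivalent. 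From $\theta_{k+1}^2 = \theta_k^2(1-\theta_{k+1})$ I divide both sides by $\theta_k^2 \theta_{k+1}^2$ to obtain $\frac{1}{\theta_k^2} = \frac{1-\theta_{k+1}}{\theta_{k+1}^2}$, which is precisely~\eqref{arectheta}.

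Next I would prove the monotonicity~\eqref{atheradecr}. Since every $\theta_k$ is easily seen to lie in $(0,1]$ (with $\theta_0 = 1$, and a positive root feeding a positive root), the relation $\theta_{k+1}^2 = \theta_k^2(1-\theta_{k+1})$ forces $1 - \theta_{k+1} = (\theta_{k+1}/\theta_k)^2 > 0$, so $\theta_{k+1} < 1$ for $k \geq 0$; and rearranging gives $\theta_{k+1}^2 \le \theta_k^2$ because $1-\theta_{k+1} \le 1$, whence $\theta_{k+1} \le \theta_k$. An induction confirming $0 < \theta_k \le 1$ for all $k$ underpins this step.

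For the two-sided bound~\eqref{athetabd}, I would work with the telescoping quantity suggested by~\eqref{arectheta}. Writing $u_k = 1/\theta_k$, the identity~\eqref{arectheta} reads $u_k^2 = u_{k+1}^2 - u_{k+1}$, i.e.\ $u_{k+1}^2 - u_k^2 = u_{k+1}$. Since $u_{k+1} > u_k \ge 1$, this yields $u_{k+1}^2 - u_k^2 = u_{k+1} > u_k$ on one side and $u_{k+1}^2 - u_k^2 = u_{k+1} \le u_{k+1}$ trivially; the useful pair of inequalities is $u_{k+1} - u_k \ge 1$ (giving the lower bound on $u_k$, hence $\theta_k \le \frac{2}{k+2}$ after calibrating constants) together with a matching upper bound $u_{k+1} - u_k \le \frac{u_{k+1}}{u_k + u_{k+1}} \le 1$-type estimate that controls how fast $u_k$ grows and yields $\theta_k \ge \frac{1}{k+1}$. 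Concretely, from $u_{k+1}^2 = u_k^2 + u_{k+1}$ one shows by induction that $k+1 \le u_k \le \frac{k+2}{2}$ is too crude; instead I bound $u_{k+1} - u_k = \frac{u_{k+1}}{u_{k+1}+u_k}$, which lies in $[\tfrac12, 1)$, and telescope from $u_0 = 1$ to pin $u_k$ between $\frac{k+2}{2}$ and $k+1$, inverting to~\eqref{athetabd}.

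The main obstacle is the lower bound $\theta_k \ge \frac{1}{k+1}$ (equivalently the upper bound $u_k \le k+1$): the easy direction, controlling $u_{k+1}-u_k \ge \tfrac12$, only gives the $O(1/k)$ upper bound on $\theta_k$, and one must argue more carefully — using $u_{k+1} - u_k = \frac{u_{k+1}}{u_k + u_{k+1}} < 1$ and a clean induction with the right base case — to prevent $u_k$ from growing faster than linearly. Getting the constants in~\eqref{athetabd} exactly right, rather than merely up to a factor, is where the care is needed; the monotonicity~\eqref{atheradecr} ensures the denominators behave and keeps the telescoping estimates valid.
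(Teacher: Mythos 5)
Your proposal is correct, and for \eqref{arectheta} and \eqref{atheradecr} it coincides with the paper's proof: both identify $\theta_{k+1}$ as the positive root of $P(X)=X^2+\theta_k^2X-\theta_k^2$, read off $\theta_{k+1}^2=\theta_k^2(1-\theta_{k+1})$, and deduce monotonicity from $1-\theta_{k+1}\le 1$. For the two-sided bound \eqref{athetabd} you take a genuinely different route. The paper proves the upper bound by induction directly on $\theta_k$, using that $P$ is increasing on $[0,+\infty)$ and checking $P\big(\tfrac{2}{k+3}\big)\ge 0$, and proves the lower bound by contradiction via the monotonicity of $x\mapsto(1-x)/x^2$ on $(0,2)$. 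You instead substitute $u_k=1/\theta_k$, rewrite \eqref{arectheta} as $u_{k+1}^2-u_k^2=u_{k+1}$, factor to get $u_{k+1}-u_k=u_{k+1}/(u_{k+1}+u_k)\in[\tfrac12,1)$ (using $u_{k+1}\ge u_k>0$, which follows from \eqref{atheradecr}), and telescope from $u_0=1$ to obtain $\tfrac{k+2}{2}\le u_k\le k+1$, which inverts to \eqref{athetabd}. This single telescoping argument delivers both bounds at once and is arguably cleaner than the paper's two separate inductions. One caveat: the middle of your third paragraph contains false starts --- the claim ``$u_{k+1}-u_k\ge 1$'' is wrong (it would force $\theta_k\le\tfrac{1}{k+1}$, contradicting the lower bound and already failing at $k=1$ where $\theta_1=\tfrac{\sqrt5-1}{2}$), and ``$k+1\le u_k\le\tfrac{k+2}{2}$'' is vacuous for $k\ge 1$ --- but your final, concrete version of the estimate ($u_{k+1}-u_k\in[\tfrac12,1)$ plus telescoping) is correct and suffices, so these are slips of exposition rather than gaps in the argument.
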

We shall also need the following relation of the sequences.
\begin{lemma}\label{l:xkzk}
The iterates of Algorithm~\ref{FISTA}  and Algorithm~\ref{APG} satisfy for all $k \geq 1$,
\begin{align}\label{a:xkzk}
x_{k+1}=(1-\theta_k)x_k+\theta_k z_{k+1}.
\end{align}
\end{lemma}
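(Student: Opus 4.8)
The plan is to prove the identity
\begin{equation*}
x_{k+1}=(1-\theta_k)x_k+\theta_k z_{k+1}
\end{equation*}
for both algorithms by directly combining the update rules that define the iterates, using the recurrence \eqref{arectheta} on the $\theta_k$ to reconcile the two $z$-updates. Since FISTA and APG differ in which variable is updated via the proximal step, I would treat the two cases, but I expect them to collapse to essentially the same computation. For APG the claim is almost immediate: line 5 of Algorithm~\ref{APG} reads $x_{k+1}=y_k+\theta_k(z_{k+1}-z_k)$, and substituting the definition $y_k=(1-\theta_k)x_k+\theta_k z_k$ from line 3 gives $x_{k+1}=(1-\theta_k)x_k+\theta_k z_k+\theta_k z_{k+1}-\theta_k z_k=(1-\theta_k)x_k+\theta_k z_{k+1}$, so the identity falls out with no use of the $\theta$-recurrence at all, and in fact holds for every $k\ge 0$.

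For FISTA the work is in line 5, which reads $z_{k+1}=z_k+\tfrac{1}{\theta_k}(x_{k+1}-y_k)$, equivalently $x_{k+1}=y_k+\theta_k(z_{k+1}-z_k)$. This is exactly the APG relation, so substituting $y_k=(1-\theta_k)x_k+\theta_k z_k$ again yields the claim. Thus the first thing I would check is whether the two algorithms share this affine relation between $x_{k+1}$, $y_k$, $z_{k+1}$, $z_k$; if they do, the lemma reduces to a one-line substitution of the $y_k$ definition in both cases. The only subtlety is that in FISTA, $x_{k+1}$ is produced by the proximal step on the $x$-variable while $z_{k+1}$ is then defined from it, whereas in APG the roles are reversed; I would verify that despite this reversal the algebraic relation $x_{k+1}-y_k=\theta_k(z_{k+1}-z_k)$ holds in both, which is manifest from the respective update lines.

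The main obstacle, such as it is, is cosmetic: the lemma is stated for $k\ge 1$ even though the computation appears valid from $k=0$, so I would make sure no hidden initialization issue (for instance the handling of $\theta_0=1$, $z_0=x_0$, or the definition of $y_0$) forces the restriction, and I would confirm that the prox-step definitions of $x_{k+1}$ (FISTA) and $z_{k+1}$ (APG) are never actually needed — only the \emph{linear} update lines enter the identity. My expectation is that the proof is genuinely a two-line substitution in each case and that \eqref{arectheta} and \eqref{atheradecr} are not required here; those recurrences will instead be invoked in the subsequent convergence estimates rather than in this structural identity. I would therefore present the APG case first as the model computation and then note that FISTA's line~5, rearranged, is identical, concluding the proof.
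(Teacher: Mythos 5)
Your proof is correct and matches the paper's own argument, which likewise obtains \eqref{a:xkzk} by combining lines 3 and 5 of each algorithm; your observation that the identity already holds for $k\ge 0$ and that \eqref{arectheta} is not needed is also consistent with the paper.
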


It is known that  the sequence of objective values $\{F(x_k)\}$ generated by accelerated schemes, in contrast to that generated by proximal gradient schemes, does not decrease monotonically. However, this sequence is always  upper bounded by the initial value $F(x_0)$. Following~\cite{Lin2015}, we refer to this result as the \textit{non-blowout} property of accelerated schemes.
\begin{proposition}\label{prop:nonblowout}The iterates of Algorithms~\ref{FISTA} and~\ref{APG} satisfy for all $k\geq 1$,
$$
F(x_k)\leq F(x_0).
$$
\end{proposition}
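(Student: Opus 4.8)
The plan is to reduce the statement to a single monotone Lyapunov-type inequality and then exploit the freedom in its reference point by evaluating it at $x = x_0$. The engine of the argument is the one-step estimate that, for \emph{every} $x\in\R^n$ and every $k\geq 0$, both schemes satisfy
\begin{equation}
F(x_{k+1}) \leq (1-\theta_k)F(x_k) + \theta_k F(x) + \frac{\theta_k^2}{2}\|x - z_k\|_L^2 - \frac{\theta_k^2}{2}\|x - z_{k+1}\|_L^2. \label{eq:onestep-sketch}
\end{equation}
First I would establish \eqref{eq:onestep-sketch}. The starting point is the descent inequality \eqref{a:smoothness} evaluated at the pair $(y_k, x_{k+1})$, into which I substitute $x_{k+1} - y_k = \theta_k(z_{k+1}-z_k)$ and $x_{k+1} = (1-\theta_k)x_k + \theta_k z_{k+1}$ from Lemma~\ref{l:xkzk}. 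I then bound $f(y_k)$ from above by the convex combination, with weights $1-\theta_k$ and $\theta_k$, of the two convexity inequalities of $f$ anchored at $x_k$ and at $x$, and bound $\psi(x_{k+1})$ using convexity together with the same identity $x_{k+1} = (1-\theta_k)x_k + \theta_k z_{k+1}$. For FISTA this bookkeeping is carried out on the proximal model in the variable $x$, and for APG on the $\theta_k$-rescaled model in the variable $z$; but in both cases the terms carrying $\nabla f(y_k)$ collapse to the single inner product $\theta_k\langle \nabla f(y_k), z_{k+1}-x\rangle$, and the optimality (equivalently, the $\|\cdot\|_L$-strong convexity) of the proximal update converts this into $\tfrac{\theta_k^2}{2}\bigl(\|x-z_k\|_L^2 - \|x-z_{k+1}\|_L^2\bigr)$ while the spurious $\|z_{k+1}-z_k\|_L^2$ contributions cancel, leaving exactly \eqref{eq:onestep-sketch}.

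Second, dividing \eqref{eq:onestep-sketch} by $\theta_k^2$ and using the recursion $\frac{1-\theta_{k+1}}{\theta_{k+1}^2} = \frac{1}{\theta_k^2}$ from \eqref{arectheta}, I obtain that the quantity
\[
\phi_k(x) := \frac{1-\theta_k}{\theta_k^2}\bigl(F(x_k) - F(x)\bigr) + \frac{1}{2}\|x - z_k\|_L^2
\]
is non-increasing in $k$ for each fixed $x$, hence $\phi_k(x) \leq \phi_0(x)$. Since $\theta_0 = 1$ and $z_0 = x_0$, the coefficient $\frac{1-\theta_0}{\theta_0^2}$ vanishes and $\phi_0(x) = \frac12\|x - x_0\|_L^2$.

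The decisive step is then to instantiate the reference point at $x = x_0$. There $\phi_0(x_0) = \frac12\|x_0 - x_0\|_L^2 = 0$, so the monotonicity yields
\[
\frac{1-\theta_k}{\theta_k^2}\bigl(F(x_k) - F(x_0)\bigr) + \frac12\|x_0 - z_k\|_L^2 \leq 0.
\]
The squared-distance term is non-negative and, by \eqref{athetabd}, $0 < \theta_k < 1$ for every $k\geq 1$, so $\frac{1-\theta_k}{\theta_k^2} > 0$; this forces $F(x_k) \leq F(x_0)$, which is the claim.

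I expect the only delicate point to be the uniform derivation of \eqref{eq:onestep-sketch} for the two schemes: FISTA performs the proximal step in $x$ while APG performs it in $z$ with a $\theta_k$-rescaled metric, so the strong-convexity moduli and the auxiliary $\|z_{k+1}-z_k\|_L^2$ terms enter differently and must be tracked carefully until they cancel. Once \eqref{eq:onestep-sketch} is available, the remainder is the short telescoping above, whose one genuinely new idea—compared with the usual $O(1/k^2)$ analysis, which plugs in a minimizer $x^\star$—is to choose the reference point to be the \emph{initial} iterate $x_0$, which makes the starting value of the Lyapunov function vanish.
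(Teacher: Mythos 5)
Your proof is correct, but it follows a genuinely different route from the paper. The paper treats the two schemes separately: for FISTA it works directly in the $(x_k,y_k)$ formulation, combining smoothness, the prox optimality condition and $\beta_k\le 1$ to get the quasi-descent inequality $F(x_{k+1})\le F(x_k)+\tfrac12\|x_k-x_{k-1}\|_L^2-\tfrac12\|x_k-x_{k+1}\|_L^2$, which telescopes; for APG it invokes Tseng's one-step estimate (your inequality \eqref{eq:onestep-sketch}) but instantiated at the \emph{current} iterate $x=x_k$, converts $\|x_k-z_{k+1}\|_L^2$ into $\|x_{k+1}-z_{k+1}\|_L^2$ via $x_{k+1}-z_{k+1}=(1-\theta_k)(x_k-z_{k+1})$, and telescopes using $\theta_{k+1}\le\theta_k$. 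You instead use the one-step estimate with an \emph{arbitrary} reference point for both schemes, pass to the standard Lyapunov function $\phi_k(x)$, and pick $x=x_0$ so that $\phi_0(x_0)=0$ and positivity of $\tfrac{1-\theta_k}{\theta_k^2}$ for $k\ge 1$ forces the claim. Your route is unified and shorter once \eqref{eq:onestep-sketch} is available, and it has the added economy that the very same lemma, instantiated at $x=x^\star$, yields Proposition~\ref{prop:fista_basic}; the cost is that you must actually prove \eqref{eq:onestep-sketch} for FISTA with a general reference point (Beck--Teboulle's Lemma~4.1 is stated only at $x^\star$, though their derivation uses nothing about $x^\star$ being a minimizer, so redoing it as you sketch is routine), whereas the paper's separate arguments can lean on the cited results as stated and additionally yield per-step monotonicity information that the endpoint comparison $\phi_k(x_0)\le\phi_0(x_0)$ does not.
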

The non-blowout property of accelerated schemes can be found in many papers, see for example~\cite{Lin2015,WenChenPong}.  It will be repeatedly used in this paper to derive the linear convergence rate of restarted methods.
Finally recall the following fundamental property for accelerated schemes.
\begin{proposition}
\label{prop:fista_basic}
The iterates of Algorithms~\ref{FISTA}  and~\ref{APG} satisfy for all $k \geq 1$,
\begin{equation}
\label{eq:itcompl_fista}
\frac{1}{\theta_{k-1}^2}(F(x_{k}) - F^\star) + \frac{1}{2}\|z_{k}- x^\star\|^2_L \leq  \frac{1}{2}\|x_{0}-x^\star\|^2_L.
\end{equation}
where $x^\star$ is an arbitrary point in $\cX_\star$.
\end{proposition}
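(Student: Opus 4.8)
The plan is to exhibit the left-hand side of \eqref{eq:itcompl_fista} as a potential function that is non-increasing in $k$, and then to identify its value after one step with the right-hand side. Concretely, I would set
$$V_k \eqdef \frac{1}{\theta_{k-1}^2}\big(F(x_k)-F^\star\big)+\tfrac12\|z_k-x^\star\|_L^2,\qquad k\geq 1,$$
and aim to prove the one-step inequality
$$\frac{1}{\theta_k^2}\big(F(x_{k+1})-F^\star\big)+\tfrac12\|z_{k+1}-x^\star\|_L^2 \;\leq\; \frac{1-\theta_k}{\theta_k^2}\big(F(x_k)-F^\star\big)+\tfrac12\|z_k-x^\star\|_L^2$$
for every $k\geq 0$. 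Once this is available, the recursion \eqref{arectheta} rewrites the first factor on the right as $1/\theta_{k-1}^2$, so the inequality reads $V_{k+1}\leq V_k$ for $k\geq 1$; and at $k=0$ the choices $\theta_0=1$ and $z_0=x_0$ make the right-hand side collapse to $\tfrac12\|x_0-x^\star\|_L^2$, giving $V_1\leq \tfrac12\|x_0-x^\star\|_L^2$. Chaining the two yields $V_k\leq\tfrac12\|x_0-x^\star\|_L^2$ for all $k\geq 1$, which is exactly \eqref{eq:itcompl_fista}.

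The heart of the argument is therefore the one-step inequality. I would start from the smoothness bound \eqref{a:smoothness} applied with $x=x_{k+1}$, $y=y_k$ to control $f(x_{k+1})$, and bound $F(x_{k+1})$ by the value at $x_{k+1}$ of the model $m_k(x)\eqdef f(y_k)+\langle\nabla f(y_k),x-y_k\rangle+\tfrac12\|x-y_k\|_L^2+\psi(x)$ that the proximal step minimizes. Since the smooth part of $m_k$ has Hessian $\operatorname{diag}(L)$, the map $m_k$ is $1$-strongly convex with respect to $\|\cdot\|_L$, so $m_k(x_{k+1})\leq m_k(u)-\tfrac12\|u-x_{k+1}\|_L^2$ for every $u$. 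I would then evaluate this at the test point $u\eqdef(1-\theta_k)x_k+\theta_k x^\star$ and discard the smooth terms in $m_k(u)$ using the gradient inequality for the convex $f$ at $x_k$ and at $x^\star$ (combined with weights $1-\theta_k$ and $\theta_k$) together with convexity of $\psi$; this replaces $m_k(u)$ by $(1-\theta_k)F(x_k)+\theta_k F^\star+\tfrac12\|u-y_k\|_L^2$.

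The two quadratic terms are then made to telescope through the geometry of the iterates. Because $y_k=(1-\theta_k)x_k+\theta_k z_k$, we get $u-y_k=\theta_k(x^\star-z_k)$, and Lemma~\ref{l:xkzk} (the identity $x_{k+1}=(1-\theta_k)x_k+\theta_k z_{k+1}$) gives $u-x_{k+1}=\theta_k(x^\star-z_{k+1})$; hence $\tfrac12\|u-y_k\|_L^2-\tfrac12\|u-x_{k+1}\|_L^2=\tfrac{\theta_k^2}{2}\big(\|x^\star-z_k\|_L^2-\|x^\star-z_{k+1}\|_L^2\big)$. Substituting this, subtracting $F^\star$, and dividing by $\theta_k^2$ yields precisely the one-step inequality stated above.

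The step I expect to be most delicate is the choice of the test point $u$ and the bookkeeping that makes the prox quadratic cancel exactly against the distance terms: the weights $(1-\theta_k,\theta_k)$ must match simultaneously in the convexity bound, in $u-y_k$, and in $u-x_{k+1}$, and it is Lemma~\ref{l:xkzk} that guarantees the last of these. A secondary point is that I have written the argument for FISTA, whose proximal step is in the $x$-variable; for APG the proximal step is in $z$ and one instead invokes the $\theta_k$-strong convexity of the subproblem in line~4, but Lemma~\ref{l:xkzk} again reduces the computation to the same identities, so the same potential $V_k$ works and the two proofs coincide after the first estimate.
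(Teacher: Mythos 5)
Your proposal is correct: the potential $V_k$ is the right object, the one-step inequality follows exactly as you describe from the $1$-strong convexity of the prox model $m_k$ with test point $u=(1-\theta_k)x_k+\theta_k x^\star$, the identities $u-y_k=\theta_k(x^\star-z_k)$ and $u-x_{k+1}=\theta_k(x^\star-z_{k+1})$ (the latter via Lemma~\ref{l:xkzk}) make the quadratics telescope, and \eqref{arectheta} together with $\theta_0=1$, $z_0=x_0$ closes the induction. The paper, however, does not carry out this computation at all: its proof of Proposition~\ref{prop:fista_basic} is a two-line citation, observing that $z_{k+1}=\theta_k^{-1}x_{k+1}-(\theta_k^{-1}-1)x_k$ and invoking Lemma~4.1 of \cite{beck2009fista} for FISTA, and Theorem~3 of \cite{FR:2013approx} for APG. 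What you have written is essentially the classical argument underlying those cited results, presented in a unified, self-contained way; the gain is that the reader sees exactly where Assumption~\ref{ass1}, the convexity of $f$ and $\psi$, and the recursion \eqref{arectheta} enter, while the paper's version buys brevity at the cost of outsourcing the core estimate. Your closing caveat about APG is the one place where ``the two proofs coincide'' is slightly optimistic: there the prox step is taken in the $z$-variable with modulus $\theta_k$, so the natural test point for the subproblem is $x^\star$ itself rather than $u$, and one additionally needs $\psi(x_{k+1})\leq(1-\theta_k)\psi(x_k)+\theta_k\psi(z_{k+1})$ from Lemma~\ref{l:xkzk} before the same telescoping appears; this is a routine adjustment, not a gap, and you correctly flag it.
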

Again, Proposition~\ref{prop:fista_basic} is not new and can be easily derived from existing results, see Appendix for a proof. 
We derive from Proposition~\ref{prop:fista_basic} a direct corollary.
\begin{corollary}\label{coro:fista_basicdist}
The iterates of Algorithm~\ref{FISTA}  and Algorithm~\ref{APG} satisfy for all $k \geq 1$,
\begin{equation}
\label{eq:itcompl_fistadist}
\frac{1}{\theta_{k-1}^2}(F(x_{k}) - F^\star) + \frac{1}{2}\dist_L(z_{k},  \cX_\star)^2 \leq  \frac{1}{2}\dist_L(x_{0},\cX_\star)^2.
\end{equation}
\end{corollary}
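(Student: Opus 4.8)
The plan is to derive Corollary~\ref{coro:fista_basicdist} directly from Proposition~\ref{prop:fista_basic} by exploiting the freedom in the choice of $x^\star \in \cX_\star$. The key observation is that the inequality~\eqref{eq:itcompl_fista} holds for \emph{every} $x^\star \in \cX_\star$, so I am free to pick the particular point that is most convenient for turning the norm terms into distance-to-$\cX_\star$ terms. Since $\cX_\star$ is a closed convex set and $\|\cdot\|_L$ is a genuine (weighted Euclidean) norm, the projection of $z_k$ onto $\cX_\star$ with respect to $\|\cdot\|_L$ exists and is unique, and achieves $\dist_L(z_k,\cX_\star)$.

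The natural choice is to take $x^\star$ to be the $\|\cdot\|_L$-projection of the \emph{initial} point $x_0$ onto $\cX_\star$, i.e. the point realizing $\dist_L(x_0,\cX_\star)$. With this choice the right-hand side of~\eqref{eq:itcompl_fista} becomes exactly $\tfrac12 \dist_L(x_0,\cX_\star)^2$, which is precisely the right-hand side of~\eqref{eq:itcompl_fistadist}. For the left-hand side, the objective-gap term $\tfrac{1}{\theta_{k-1}^2}(F(x_k)-F^\star)$ is unchanged, and for the remaining term I use the elementary lower bound $\|z_k - x^\star\|_L^2 \geq \dist_L(z_k,\cX_\star)^2$, which holds because $\dist_L(z_k,\cX_\star)$ is by definition the infimum of $\|z_k - y\|_L$ over $y \in \cX_\star$ and $x^\star$ is one such admissible $y$. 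Chaining these together gives
\[
\frac{1}{\theta_{k-1}^2}(F(x_k)-F^\star) + \frac12 \dist_L(z_k,\cX_\star)^2 \leq \frac{1}{\theta_{k-1}^2}(F(x_k)-F^\star) + \frac12 \|z_k - x^\star\|_L^2 \leq \frac12 \|x_0 - x^\star\|_L^2 = \frac12 \dist_L(x_0,\cX_\star)^2,
\]
which is exactly~\eqref{eq:itcompl_fistadist}.

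There is essentially no hard part here: the statement is a routine weakening-and-specialization of the preceding proposition, and the only thing to verify carefully is that the two uses of $x^\star$ are consistent, namely that a single fixed $x^\star$ (the projection of $x_0$) simultaneously makes the right-hand side equal to $\tfrac12\dist_L(x_0,\cX_\star)^2$ and keeps the left-hand side a valid lower bound. Since the projection of $x_0$ is a legitimate element of $\cX_\star$, Proposition~\ref{prop:fista_basic} applies to it verbatim, and the distance inequality on $z_k$ then only strengthens the bound. The one minor point worth stating explicitly is that the infimum defining $\dist_L(z_k,\cX_\star)$ is attained (so that the projection exists), which follows from closedness and convexity of $\cX_\star$ together with the fact that $\|\cdot\|_L$ is an equivalent norm on $\R^n$; but even without attainment the bound $\|z_k-x^\star\|_L^2 \geq \dist_L(z_k,\cX_\star)^2$ holds by the definition of the infimum, so the argument goes through regardless.
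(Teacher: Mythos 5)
Your argument is correct and is exactly the intended one: the paper treats this as an immediate consequence of Proposition~\ref{prop:fista_basic}, obtained by choosing $x^\star$ to be the $\|\cdot\|_L$-projection of $x_0$ onto $\cX_\star$ and then bounding $\|z_k-x^\star\|_L$ below by $\dist_L(z_k,\cX_\star)$. Your additional remark that attainment of the infimum is not even needed for the lower bound is a fair (if unnecessary) precaution; nothing is missing.
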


\begin{remark}
All the results presented above hold without Assumption~\ref{ass2}.
\end{remark}

\subsubsection{Conditional linear convergence}
We first derive directly from Corollary~\ref{coro:fista_basicdist} and Assumption~\ref{ass2} the following decreasing property.
\begin{corollary}
\label{coro:fista_basic}
If $\hat x$ is the output of Algorithm~\ref{FISTA} or Algorithm~\ref{APG} with input $(x_0,K)$ and Assumptions~\ref{ass1} and~\ref{ass2} hold, then 
\begin{equation}
\label{eq:itcompl_fista_coro}
F(\hat x) - F^\star \leq  \frac{\theta^2_{K-1}}{\mu_F(L,x_0)} \left(F(x_0)-F^\star\right),
\end{equation}
and
\begin{equation}
\label{eq:itcompl_fista_coro_x}
\dist_L(\hat x,\cX_\star)^2 \leq  \frac{\theta^2_{K-1}}{\mu_F(L,x_0)} \dist_L(x_0,\cX_\star)^2.
\end{equation}
\end{corollary}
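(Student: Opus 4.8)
The plan is to combine the unconditional estimate of Corollary~\ref{coro:fista_basicdist} with the growth condition of Assumption~\ref{ass2}, noting that the output $\hat x$ is exactly the last iterate $x_K$. The common starting point for both claims is the inequality of Corollary~\ref{coro:fista_basicdist}, namely $\frac{1}{\theta_{K-1}^2}(F(x_{K}) - F^\star) + \frac{1}{2}\dist_L(z_{K}, \cX_\star)^2 \leq \frac{1}{2}\dist_L(x_{0},\cX_\star)^2$, in which both terms on the left-hand side are nonnegative (the first because $F(x_K)\geq F^\star$). Discarding the distance term therefore yields the intermediate bound $F(x_K) - F^\star \leq \tfrac{\theta_{K-1}^2}{2}\dist_L(x_0, \cX_\star)^2$, which I will reuse for both parts.

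First I would establish~\eqref{eq:itcompl_fista_coro}. Applying Assumption~\ref{ass2} at the point $x_0$ itself, which lies in $[F\leq F(x_0)]$ trivially, gives $\tfrac{1}{2}\dist^2_L(x_0, \cX_\star) \leq \tfrac{1}{\mu_F(L,x_0)}(F(x_0) - F^\star)$. Substituting this into the intermediate bound above immediately produces $F(x_K) - F^\star \leq \tfrac{\theta_{K-1}^2}{\mu_F(L,x_0)}(F(x_0) - F^\star)$, which is the first claim.

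Next I would establish~\eqref{eq:itcompl_fista_coro_x}. The decisive step here is to apply Assumption~\ref{ass2} at the \emph{output} point $x_K$ rather than at $x_0$. This is legitimate only if $x_K \in [F\leq F(x_0)]$, i.e. $F(x_K)\leq F(x_0)$; since accelerated schemes are not monotone in the objective, this is not automatic, and it is precisely the non-blowout property of Proposition~\ref{prop:nonblowout} that supplies it. Granting this, Assumption~\ref{ass2} gives $\tfrac{\mu_F(L,x_0)}{2}\dist^2_L(x_K, \cX_\star) \leq F(x_K) - F^\star$. Chaining this with the intermediate bound $F(x_K) - F^\star \leq \tfrac{\theta_{K-1}^2}{2}\dist_L(x_0, \cX_\star)^2$ and cancelling the common factor $\tfrac12$ yields $\dist^2_L(x_K, \cX_\star) \leq \tfrac{\theta_{K-1}^2}{\mu_F(L,x_0)}\dist_L(x_0, \cX_\star)^2$, which is the second claim.

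The main obstacle is conceptual rather than computational: one must recognize that the distance estimate requires the growth condition evaluated at the iterate $x_K$, and that the non-monotonicity of the accelerated dynamics makes the membership $x_K \in [F\leq F(x_0)]$ a genuine hypothesis that is only guaranteed by Proposition~\ref{prop:nonblowout}. Once this is in hand, the remaining manipulations are one-line substitutions.
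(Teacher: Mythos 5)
Your proposal is correct and follows essentially the same route as the paper: both drop the nonnegative distance term in Corollary~\ref{coro:fista_basicdist}, then apply Assumption~\ref{ass2} at $x_0$ for the first bound and at $\hat x$ (justified by the non-blowout property of Proposition~\ref{prop:nonblowout}) for the second. Your explicit remark that the membership $\hat x\in[F\leq F(x_0)]$ is the genuinely non-trivial hypothesis is exactly the point the paper's one-line invocation of Proposition~\ref{prop:nonblowout} is making.
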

\begin{proof}
By Proposition~\ref{prop:nonblowout} and Assumption~\ref{ass2}, $\mu_F(L, \hat x)\geq \mu_F(L,x_0)$.  
Therefore,
$$
 \frac{1}{\theta_{K-1}^2}(F(\hat x) - F^\star) \overset{\eqref{eq:itcompl_fistadist}}{\leq}  \frac{1}{2}\dist_L(x_{0},\cX_\star)^2\leq \frac{1}{\mu_F(L,x_0)} \left(F(x_0)-F^\star\right),
$$
and
$$
\frac{\mu_F(L, x_0)}{2\theta_{K-1}^2} \dist_L(\hat x,\cX_\star)^2 \leq \frac{1}{\theta_{K-1}^2}(F(\hat x) - F^\star) \overset{\eqref{eq:itcompl_fistadist}}{\leq}  \frac{1}{2}\dist_L(x_{0},\cX_\star)^2.
$$
\end{proof}

\begin{proposition}\label{prop:fixedres}
Let $\{x_{tK}\}_{t\geq 0}$ be the sequence generated by Algorithm~\ref{alg:fixedrestart} with $K\geq 1$. If  Assumptions~\ref{ass1} and~\ref{ass2} hold, then we have
$$
F(x_{tK})-F^\star \leq \left(\frac{\theta^2_{K-1}}{\mu_F(L,x_0)} \right)^t \left(F(x_0)-F^\star\right),\enspace \forall t\geq 1,
$$
and
$$
\dist_L(x_{tK},\cX_\star)^2\leq \left(\frac{\theta^2_{K-1}}{\mu_F(L,x_0)} \right)^t \dist_L(x_0,\cX_\star)^2,\enspace \forall t\geq 1.
$$
\end{proposition}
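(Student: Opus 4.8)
The plan is to argue by induction on $t$, using the single-restart estimate of Corollary~\ref{coro:fista_basic} to supply the contraction factor for each individual round. The one point requiring care is that Corollary~\ref{coro:fista_basic}, applied to the $t$-th round (which starts at $x_{tK}$ and outputs $x_{(t+1)K}$), naturally produces the factor $\theta_{K-1}^2/\mu_F(L,x_{tK})$, whereas the claimed bound carries the fixed constant $\mu_F(L,x_0)$ at every round. So the heart of the proof will be to establish the uniform lower bound $\mu_F(L,x_{tK}) \geq \mu_F(L,x_0)$ for all $t$.

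To obtain this, I would first observe that the non-blowout property (Proposition~\ref{prop:nonblowout}), applied to the run of FISTA or APG initialized at $x_{tK}$, forces $F(x_{(t+1)K}) \leq F(x_{tK})$. Hence the sequence of restart values $\{F(x_{tK})\}_{t\geq 0}$ is non-increasing, and in particular $F(x_{tK}) \leq F(x_0)$ for every $t$. This gives the sublevel-set inclusion $[F\leq F(x_{tK})] \subseteq [F\leq F(x_0)]$, so the quadratic-growth inequality of Assumption~\ref{ass2} valid on the larger set with constant $\mu_F(L,x_0)$ is a fortiori valid on the smaller set; consequently we may take $\mu_F(L,x_{tK}) \geq \mu_F(L,x_0)$, exactly as in the proof of Corollary~\ref{coro:fista_basic}.

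With this monotonicity in hand I would run the induction. For the objective bound, applying \eqref{eq:itcompl_fista_coro} to the $t$-th round and then using $\mu_F(L,x_{tK}) \geq \mu_F(L,x_0)$ gives
$$
F(x_{(t+1)K}) - F^\star \leq \frac{\theta_{K-1}^2}{\mu_F(L,x_{tK})}\bigl(F(x_{tK})-F^\star\bigr) \leq \frac{\theta_{K-1}^2}{\mu_F(L,x_0)}\bigl(F(x_{tK})-F^\star\bigr),
$$
so each restart contracts the optimality gap by at most $\theta_{K-1}^2/\mu_F(L,x_0)$; starting from the trivial base $t=0$ and composing $t$ such steps yields the stated geometric bound. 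The distance estimate follows identically, replacing \eqref{eq:itcompl_fista_coro} by \eqref{eq:itcompl_fista_coro_x} and again invoking $\mu_F(L,x_{tK}) \geq \mu_F(L,x_0)$ to turn the per-round factor $\theta_{K-1}^2/\mu_F(L,x_{tK})$ into $\theta_{K-1}^2/\mu_F(L,x_0)$.

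The only genuine subtlety, and the step I expect to be the crux, is precisely the uniform bound $\mu_F(L,x_{tK}) \geq \mu_F(L,x_0)$: accelerated schemes are not monotone \emph{within} a round, so it is only the non-blowout property that keeps every restart point $x_{tK}$ inside the fixed sublevel set $[F\leq F(x_0)]$ and thereby licenses the use of a single error-bound constant across all rounds, rather than a possibly shrinking sequence of constants. Everything else is a routine telescoping of the one-step contraction.
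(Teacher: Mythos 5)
Your proposal is correct and follows essentially the same route as the paper's proof: apply the single-restart contraction of Corollary~\ref{coro:fista_basic} round by round, use the non-blowout property (Proposition~\ref{prop:nonblowout}) to keep every restart point $x_{tK}$ in the sublevel set $[F\leq F(x_0)]$ so that $\mu_F(L,x_{tK})\geq \mu_F(L,x_0)$, and then telescope. You correctly identify this uniform lower bound on the error-bound constant as the only nontrivial step, which is exactly where the paper's argument also concentrates.
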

\begin{proof}
From Corollary~\ref{coro:fista_basic},
we have
$$
F((t+1)K)-F^\star \leq \frac{\theta^2_{K-1}}{\mu_F(L,x_{tK})}  \left(F(tK)-F^\star\right),\enspace \forall t\geq 0,
$$
and
$$
\dist_L((t+1)K,\cX_\star)^2 \leq \frac{\theta^2_{K-1}}{\mu_F(L,x_{tK})}  \dist_L(tK,\cX_\star)^2,\enspace \forall t\geq 0.
$$
By the non-blowout property stated in Proposition~\ref{prop:nonblowout},
$$
F(x_{tK})\leq F(x_{(t-1)K})\leq \cdots F(x_0).
$$
Therefore, under Assumption~\ref{ass2},
$$
\mu_F(L,x_{tK})\geq \mu_F(L,x_0),\enspace \forall t\geq 0.
$$
It follows that
$$
F((t+1)K)-F^\star \leq \frac{\theta^2_{K-1}}{\mu_F(L,x_{0})}  \left(F(tK)-F^\star\right),\enspace \forall t\geq 0,
$$
and
$$
\dist_L((t+1)K,\cX_\star)^2 \leq \frac{\theta^2_{K-1}}{\mu_F(L,x_{0})}  \dist_L(tK,\cX_\star)^2,\enspace \forall t\geq 0.
$$
~
\end{proof}
\begin{remark}
Although Proposition~\ref{prop:fixedres}  provides a guarantee of  linear convergence when $\theta^2_{K-1}/\mu_F(L,x_0)<1$, it does not  give any information in the case when $K$ is too small to satisfy $\theta^2_{K-1}/\mu_F(L,x_0)<1$.
\end{remark}
For any $\mu>0$. Define:
\begin{align}\label{a:defK1}K(\mu):=\ceil*{ 2\sqrt{\frac{e}{\mu}} -1}.
\end{align}
By the right inequality of~\eqref{athetabd}, letting the restarting period $K\geq K(\mu_F(L,x_0))$ implies
\begin{align}\label{a:thetakmulessthane}
\frac{\theta^2_{K-1}}{\mu_F(L,x_0)}\leq e^{-1}. 
\end{align}
Therefore, if we know in advance $\mu_F(L,x_0)$ and restart Algorithm~\ref{FISTA} or~\ref{APG} every $K(\mu_F(L,x_0))$ iterations,
then after  computing
\begin{align}\label{a:fixedresoptimalcb}
K(\mu_F(L,x_0))\ln\frac{\dist_L(x_0,\cX_\star)^2}{\epsilon}=O\left(\sqrt{\mu^{-1}_F(L,x_0)}\ln{\epsilon^{-1}}\right)
\end{align}
number of proximal gradient mappings, we get a point $x$ such that $\dist_L(x,\cX_\star)^2\leq \epsilon$. 

\subsubsection{Unconditional linear convergence}
We now prove a contraction result on the distance to the optimal solution set. 
\begin{theorem}\label{th:linearconvergenceofFISTArestart}
If $\hat x$ is the output of Algorithm~\ref{FISTA} or Algorithm~\ref{APG} with input $(x_0,K)$ and Assumptions~\ref{ass1} and~\ref{ass2} hold, then 
\begin{equation}
\label{eq:itcompl_fista_quad_error_bound}
 \dist_L(\hat x, \cX_\star)^2 \leq \min\left( \frac{\theta_{K-1}^2}{\mu_F(L,x_0)}, \frac{1}{{ 1 + \frac{\mu_F(L, x_0)}{2\theta_{K-1}^2}} }\right)\dist_L(x_0, \cX_\star)^2 .
\end{equation}
\end{theorem}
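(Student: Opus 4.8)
The minimum contains two factors. The first, $\theta_{K-1}^2/\mu_F(L,x_0)$, is precisely the distance contraction already recorded in Corollary~\ref{coro:fista_basic} (equation~\eqref{eq:itcompl_fista_coro_x}), so no work is needed for it; since it suffices to prove the bound for each factor separately (their conjunction gives the $\min$), the entire content of the theorem is the second, \emph{unconditional}, factor $1/(1+\mu_F(L,x_0)/(2\theta_{K-1}^2))$, which is strictly below $1$ for every $K\geq 1$. Throughout I write $\mu\eqdef\mu_F(L,x_0)$ and $\theta\eqdef\theta_{K-1}$, and recall that $\hat x=x_K$.

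\textbf{Reduction for the second factor.} The plan is to start from the distance form of the fundamental inequality. Applying Corollary~\ref{coro:fista_basicdist} with $k=K$ gives $\frac{1}{\theta^2}(F(x_K)-F^\star)+\frac12\dist_L(z_K,\cX_\star)^2\leq\frac12\dist_L(x_0,\cX_\star)^2$. By the non-blowout property (Proposition~\ref{prop:nonblowout}) we have $F(x_K)\leq F(x_0)$, so $x_K\in[F\leq F(x_0)]$ and Assumption~\ref{ass2} yields $F(x_K)-F^\star\geq\frac{\mu}{2}\dist_L(x_K,\cX_\star)^2$. Substituting this lower bound and multiplying by $2$ gives
\[\frac{\mu}{\theta^2}\dist_L(x_K,\cX_\star)^2+\dist_L(z_K,\cX_\star)^2\leq\dist_L(x_0,\cX_\star)^2.\]
Comparing with the desired inequality $(1+\frac{\mu}{2\theta^2})\dist_L(x_K,\cX_\star)^2\leq\dist_L(x_0,\cX_\star)^2$, the whole proof reduces to establishing $\dist_L(z_K,\cX_\star)^2\geq(1-\frac{\mu}{2\theta^2})\dist_L(x_K,\cX_\star)^2$.

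\textbf{The main obstacle.} This last inequality, relating the momentum iterate $z_K$ to the returned iterate $x_K$, is where the difficulty lies. The only link between the two is Lemma~\ref{l:xkzk}, $x_K=(1-\theta)x_{K-1}+\theta z_K$, which unavoidably introduces the penultimate iterate $x_{K-1}$. I expect the crude manipulations to be too weak: convexity of $x\mapsto\dist_L(x,\cX_\star)^2$ only yields $\dist_L(x_K,\cX_\star)^2\leq(1-\theta)\dist_L(x_{K-1},\cX_\star)^2+\theta\,\dist_L(z_K,\cX_\star)^2$, which is too lossy, while expanding $\|z_K-\bar x\|_L^2$ for $\bar x$ the $\|\cdot\|_L$-projection of $x_0$ and completing the square collapses everything back to the first factor, discarding exactly the term one needs.

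\textbf{Intended way through the obstacle.} To push the reduction through I would split on the sign of $1-\mu/(2\theta^2)$. When $\mu\geq 2\theta^2$ the target inequality is trivial (its right-hand side is nonpositive), and consistently the $\min$ equals the first factor, already proved. When $\mu<2\theta^2$ I would control $x_{K-1}$ by applying Corollary~\ref{coro:fista_basic} to the inner run of length $K-1$, namely $\dist_L(x_{K-1},\cX_\star)^2\leq(\theta_{K-2}^2/\mu)\,\dist_L(x_0,\cX_\star)^2$, and feed it into the expansion of $\dist_L(z_K,\cX_\star)^2$ obtained from Lemma~\ref{l:xkzk}. (The case $K=1$ is immediate, since $\theta_0=1$ forces $x_1=z_1$.) The delicate point, and where I expect to spend the most effort, is to dominate the cross term $\langle x_K-\bar x,\,x_{K-1}-\bar x\rangle_L$ produced by that expansion without sacrificing the quadratic-growth gain; this three-iterate coupling is the genuine crux of the statement.
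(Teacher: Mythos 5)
Your handling of the first factor is right, and your opening reduction is sound as far as it goes, but the proof has a genuine gap at exactly the point you flag as the crux, and the route you sketch cannot close it. The inequality you need, $\dist_L(z_K,\cX_\star)^2\geq(1-\frac{\mu}{2\theta^2})\dist_L(x_K,\cX_\star)^2$, is a \emph{lower} bound on the distance of the momentum iterate to the optimal set, and no property of FISTA/APG supplies such a bound: nothing prevents $z_K$ from landing (nearly) on $\cX_\star$ while $x_K=(1-\theta)x_{K-1}+\theta z_K$ is still pulled away from it by $x_{K-1}$, in which case the left side is (near) zero and the right side is positive whenever $\mu<2\theta^2$. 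Your fallback, controlling $x_{K-1}$ via Corollary~\ref{coro:fista_basic} applied to the first $K-1$ iterations, gives $\dist_L(x_{K-1},\cX_\star)^2\leq(\theta_{K-2}^2/\mu)\,\dist_L(x_0,\cX_\star)^2$, which is weaker than $\dist_L(x_0,\cX_\star)^2$ precisely when $\mu<\theta_{K-2}^2$, i.e.\ in essentially the whole regime where the second factor of the $\min$ is the binding one and the work is needed. More fundamentally, you invoke the estimate \eqref{eq:itcompl_fistadist} and the quadratic growth condition only at the final index $K$; as $\mu\to0$ the target bound degenerates to near-nonexpansiveness of the distance, which cannot be extracted from a single application of \eqref{eq:itcompl_fistadist} (that alone only yields $\dist_L(x_K,\cX_\star)^2\leq(\theta^2/\mu)\dist_L(x_0,\cX_\star)^2$, which blows up for small $\mu$).

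The paper's proof avoids lower-bounding $\dist_L(z_{k+1},\cX_\star)$ altogether. Writing $x_k^\star$ for the projection of $x_k$ onto $\cX_\star$, it splits $\frac12\|x_{k+1}-x_{k+1}^\star\|_L^2$ into a convex combination with weight $\sigma_k$: on the $\sigma_k$ part it applies quadratic growth to get $\frac{\sigma_k}{\mu_F}(F(x_{k+1})-F^\star)$, and on the $(1-\sigma_k)$ part it replaces $x_{k+1}^\star$ by the feasible point $\theta_k x_0^\star+(1-\theta_k)x_k^\star\in\cX_\star$ and uses $x_{k+1}=(1-\theta_k)x_k+\theta_k z_{k+1}$ together with convexity of $\|\cdot\|_L^2$. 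Choosing $\sigma_k=1/(1+\theta_k/\mu_F)$ makes the $F(x_{k+1})-F^\star$ term and the $\|z_{k+1}-x_0^\star\|_L^2$ term combine into exactly the left-hand side of \eqref{eq:itcompl_fista}, so both are absorbed as an \emph{upper} bound by $\frac{\theta_k}{2}\|x_0-x_0^\star\|_L^2$. This yields the recursion $\Delta_{k+1}\leq\frac{1}{1+\mu_F/\theta_k}\bigl(\theta_k\Delta_0+(1-\theta_k)\Delta_k\bigr)$ for $\Delta_k=\dist_L(x_k,\cX_\star)^2$, and an induction over all $k$ (not just the last step) gives $\Delta_k\leq\Delta_0/(1+0.5\mu_F/\theta_{k-1}^2)$. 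If you want to salvage your plan, the lesson is that the $z$-term must be consumed on the favourable side of the inequality via \eqref{eq:itcompl_fista}, and that quadratic growth must be exploited at every iterate, not only at $x_K$.
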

\begin{proof}
For simplicity we denote $\mu_F = \mu_F(L, x_0)$.
 Note that by  Assumption~\ref{ass2} and Proposition~\ref{prop:nonblowout} we have for all $k\in \bN$,
\begin{align}\label{a:Fxkdistxk}
F(x_{k}) - F^\star \geq \frac{\mu_F}{2}\dist_L(x_{k},\cX_\star)^2.
\end{align}
For all $k\in \bN$ let us denote $x^\star_k$ the projection of $x_k$ onto $\cX_\star$.
For all $k \in \mathbb N$ and $\sigma_k \in [0,1]$, we have
\begin{align*}
&\frac 12 \norm{x_{k+1} - x^\star_{k+1}}^2_L = \frac{\sigma_k}{2} \norm{x_{k+1} - x^\star_{k+1}}^2_L + \frac{1-\sigma_k}{2} \norm{x_{k+1} - x^\star_{k+1}}^2_L \\
& \leq \frac{\sigma_k}{2} \norm{x_{k+1} - x^\star_{k+1}}^2_L + \frac{1-\sigma_k}{2} \norm{x_{k+1} - \theta_k x_0^\star - (1-\theta_k) x^\star_{k}}^2_L \\
& {\leq} \frac{\sigma_k}{\mu_F} (F(x_{k+1}) - F^\star) + \frac{1-\sigma_k}{2}\theta_{k}\norm{z_{k+1} - x^\star_0}^2_L + \frac{1-\sigma_k}{2}(1-\theta_{k})\norm{x_{k} - x^\star_k}^2_L,
\end{align*}
where the first inequality follows from the convexity of $\cX_\star$, and the second inequality follows from~\eqref{a:xkzk} and~\eqref{a:Fxkdistxk}.
Let us choose $\sigma_k = \frac{1}{1+\theta_k/\mu_F}$ so that 
\[
\frac{\sigma_k}{\mu_F} = (1-\sigma_k)\frac{\theta_k}{\theta_k^2}.
\]
We proceed as
\begin{align}
\label{eq:recurrence_on_deltak}
\frac 12 \dist_L(x_{k+1}, \cX_\star)^2 &\leq (1 - \sigma_k)\theta_k \left(\frac{1}{\theta_k^2} (F(x_{k+1}) - F^\star) + \frac{1}{2}\norm{z_{k+1} - x^\star_0}^2_L \right) \notag \\
& \qquad \qquad + \frac{(1-\sigma_k)(1-\theta_{k})}{2} \norm{x_{k} - x^\star_k}^2_L  \notag \\
& \overset{\eqref{eq:itcompl_fista}}{\leq} (1 - \sigma_k)\Big(\frac{\theta_k}{2} \norm{x_0-x^\star_0}^2_L + \frac{(1-\theta_{k})}{2} \norm{x_{k} - x^\star_k}^2_L\Big)  \notag \\
& = \frac{1}{1+\mu_F/\theta_k} \Big(\frac{\theta_k}{2} \dist_L(x_{0}, \cX_\star)^2 + \frac{(1-\theta_{k})}{2} \dist_L(x_k, \cX_\star)^2\Big)
\end{align}
Denote $\Delta_k = \dist_L(x_k, \cX_\star)^2$. 
Remark that 
\[\Delta_1 \leq \frac{1}{1+\mu_F/\theta_0} \Delta_0 \leq \frac{1}{1+0.5\mu_F/\theta_0^2} \Delta_0
\]
so that we may prove that $\Delta_k \leq  \frac{1}{1+0.5\mu_F/\theta_{k-1}^2} \Delta_0$ by induction.
Let us assume that $\Delta_k \leq \frac{1}{1 +0.5 \mu_F/\theta_{k-1}^2} \Delta_0$. Then using \eqref{eq:recurrence_on_deltak}
\begin{align*}
\Delta_{k+1} &\leq \frac{1}{1+\mu_F/\theta_k} \Big(\theta_k \Delta_0 + (1-\theta_{k}) \Delta_k\Big) \\
& \leq \frac{1}{1+\mu_F/\theta_k} \Big(\theta_k \Delta_0 + \frac{1-\theta_{k}}{1 + 0.5\mu_F/\theta_{k-1}^2} \Delta_0\Big) \\
& = \frac{\theta_k(1+0.5\mu_F/\theta_{k-1}^2) + (1-\theta_k)}{(1+\mu_F/\theta_k)(1+0.5\mu_F/\theta_{k-1}^2)}\Delta_0
\end{align*}
Using~\eqref{arectheta} one can then easily check that
\begin{multline*}
\frac{\theta_k(1+0.5\mu_F/\theta_{k-1}^2) + (1-\theta_k)}{(1+\mu_F/\theta_k)(1+0.5\mu_F/\theta_{k-1}^2)} \leq \frac{1}{1+0.5\mu_F/\theta_{k}^2} \Leftrightarrow 0\leq 2\theta_k^3+\mu_F(1-\theta_k)
\end{multline*}
and so Inequality~\eqref{eq:itcompl_fista_quad_error_bound} comes by combining this with Proposition~\ref{prop:fixedres}. 
\end{proof}

Theorem~\ref{th:linearconvergenceofFISTArestart} allows us to derive immediately an explicit linear convergence rate of  Algorithm~\ref{alg:fixedrestart}, regardless of  the choice of $K\geq 1$.
\begin{corollary}\label{coro:inearconvergenceofFISTArestart}
Let $\{x_{tK}\}_{t\geq 0}$ be the sequence generated by Algorithm~\ref{alg:fixedrestart} with fixed restarting period $K\geq 1$.  If Assumptions~\ref{ass1} and~\ref{ass2} hold, then
$$
 \dist_L(x_{tK} , \cX_\star)^2 \leq \left( \min\left(\frac{\theta^2_{K-1}}{\mu_F(L,x_0)},  \frac{1}{1 + \frac{\mu_F(L, x_0)}{2\theta_{K-1}^2}} \right)\right)^{{t}}\dist_L(x_0 ,\cX_\star)^2.
$$
\end{corollary}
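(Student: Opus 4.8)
The plan is to reduce the statement to the single-cycle contraction of Theorem~\ref{th:linearconvergenceofFISTArestart} and then telescope, mirroring the argument used for Proposition~\ref{prop:fixedres} but carrying the stronger minimum throughout. First I would observe that, by construction of Algorithm~\ref{alg:fixedrestart}, the point $x_{(t+1)K}$ is exactly the output of FISTA (or APG) run with input $(x_{tK}, K)$. Applying Theorem~\ref{th:linearconvergenceofFISTArestart} with $x_{tK}$ in the role of the initial point therefore yields, for every $t \geq 0$,
$$\dist_L(x_{(t+1)K}, \cX_\star)^2 \leq \min\left(\frac{\theta_{K-1}^2}{\mu_F(L, x_{tK})}, \frac{1}{1+\frac{\mu_F(L, x_{tK})}{2\theta_{K-1}^2}}\right)\dist_L(x_{tK}, \cX_\star)^2.$$

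The next step is to replace the local growth constant $\mu_F(L, x_{tK})$, which varies from cycle to cycle, by the fixed value $\mu_F(L, x_0)$. For this I invoke the non-blowout property (Proposition~\ref{prop:nonblowout}) one cycle at a time: since $x_{(t+1)K}$ is the output of an accelerated run started at $x_{tK}$, we have $F(x_{(t+1)K}) \leq F(x_{tK})$, and an immediate induction gives $F(x_{tK}) \leq F(x_0)$ for all $t$. Hence $[F \leq F(x_{tK})] \subseteq [F \leq F(x_0)]$, and Assumption~\ref{ass2} yields $\mu_F(L, x_{tK}) \geq \mu_F(L, x_0)$. The crucial observation is that the contraction factor is nonincreasing in $\mu_F$: the first branch $\theta_{K-1}^2/\mu_F$ clearly decreases as $\mu_F$ grows, and the second branch decreases as well because its denominator increases, so their minimum is nonincreasing. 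Replacing $\mu_F(L, x_{tK})$ by the smaller $\mu_F(L, x_0)$ can therefore only enlarge the right-hand side.

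Combining these two points produces a uniform per-cycle contraction with factor $\min\!\big(\theta_{K-1}^2/\mu_F(L, x_0),\, 1/(1+\mu_F(L, x_0)/(2\theta_{K-1}^2))\big)$, after which I would simply telescope the inequality over the $t$ cycles to reach the claimed bound. I expect the only mildly delicate point to be this uniformization of $\mu_F$, which rests entirely on pairing the non-blowout property with the monotonicity of the minimum; beyond Theorem~\ref{th:linearconvergenceofFISTArestart} no further computation is needed, so the argument is essentially bookkeeping.
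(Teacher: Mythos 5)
Your proposal is correct and matches the route the paper intends (it presents the corollary as an immediate consequence of Theorem~\ref{th:linearconvergenceofFISTArestart} without writing out the proof): apply the one-cycle contraction with $x_{tK}$ as the initial point, use the non-blowout property and Assumption~\ref{ass2} to get $\mu_F(L,x_{tK})\geq\mu_F(L,x_0)$, note the contraction factor is nonincreasing in $\mu_F$, and iterate. The explicit justification of the monotonicity of the minimum is a welcome detail that the paper leaves implicit.
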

By Corollary~\ref{coro:inearconvergenceofFISTArestart}, the number of proximal mappings needed to reach an $\epsilon$-accuracy on the distance is bounded by
$$
\frac{2K}{\log(1+\frac{\mu_F(L,x_0)}{2\theta^2_{K-1}})}\log\frac{\dist_L(x_0,\cX_\star)}{\epsilon}.
$$
In particular, if we choose $K\sim 1/\sqrt{\mu_F}$, then we get an iteration complexity bound 
\begin{align}\label{a:optimalbound}
O(1/\sqrt{\mu_F} \log(1/\epsilon)).
\end{align}

\begin{remark}
In~\cite{WenChenPong}, the local linear convergence of the sequence generated by FISTA with arbitrary (fixed or adaptive) restarting frequency was proved. Our Theorem~\ref{th:linearconvergenceofFISTArestart} not only yields the global linear convergence of such sequence, but also gives an explicit bound on the convergence rate. Also, note that although an asymptotic linear convergence rate can be derived from the proof of Lemma 3.6 in~\cite{WenChenPong}, it can be checked that the asymptotic rate in~\cite{WenChenPong}  is not as good as ours. In fact, an easy calculation shows that even restarting with optimal period $K\sim 1/\sqrt{\mu_F}$, their asymptotic rate only leads to the complexity bound  $O(1/\mu^2_F\log(1/\epsilon))$. Moreover, our restarting scheme is more flexible, because the internal block in Algorithm~\ref{alg:fixedrestart} can be replaced by any scheme which satisfies all the properties presented in Section~\ref{sec:convgrad}.
\end{remark}

\section{Adaptive restarting of accelerated gradient schemes}
\label{sec:restart1}

Although Theorem~\ref{th:linearconvergenceofFISTArestart} guarantees a linear convergence of the restarted method (Algorithm~\ref{alg:fixedrestart}), it requires the knowledge of $\mu_F(L,x_0)$ to attain the complexity bound~\eqref{a:optimalbound}. 
In this section, we show how to
combine Corollary~\ref{coro:inearconvergenceofFISTArestart} with Nesterov's adaptive restart method, first proposed in~\cite{Nesterov:2007composite},  in order to
obtain a complexity bound close to~\eqref{a:optimalbound} that does not depend on a guess on $\mu_F(L,x_0)$.

\subsection{Bounds on gradient mapping norm}

We first show the following inequalities that generalize similar ones in~\cite{Nesterov:2007composite}. Hereinafter we define the proximal mapping:
$$T(x):= \arg\min_{y\in \R^n} \left\{\< \nabla f(x), y-x>+\frac{1}{2} \|y-x\|^2_L+\psi(y) \right\}.$$      
\begin{proposition}
\label{prop:estimatedistXstarbb1}  If Assumption~\ref{ass1} and~\ref{ass2} hold, then for any $x\in \R^n$, we have
\begin{align}\label{a:TLyleqF1}
& \norm{T(x) - x}^2_L \leq  {2}\left(F(x) - F(T(x)) \right) \leq 2\left(F(x) - F^\star\right),
\end{align}
\begin{align}\label{a:normTLdistx}
\dist_L(T(x),\cX_\star) \leq \frac{4}{\mu_F(L,x)}  \norm{x - T(x) }_L ,
\end{align}
and
\begin{align}\label{a:FTLy1}
& F(T(x))-F^\star \leq  \frac{8\norm{x - T(x) }_L^2}{\mu_F(L,x)}.
\end{align}
\end{proposition}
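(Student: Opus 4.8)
The plan is to establish the three inequalities in the order \eqref{a:TLyleqF1}, \eqref{a:normTLdistx}, \eqref{a:FTLy1}, each one feeding into the next. Throughout I would write $r:=\norm{x-T(x)}_L$, abbreviate $\mu:=\mu_F(L,x)$, and let $\norm{\cdot}_{1/L}$ denote the norm dual to $\norm{\cdot}_L$, i.e.\ $\norm{s}_{1/L}^2=\sum_i s_i^2/L_i$, so that Cauchy--Schwarz reads $\langle s,u\rangle\le \norm{s}_{1/L}\norm{u}_L$ and $\norm{\mathrm{diag}(L)u}_{1/L}=\norm{u}_L$.

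For \eqref{a:TLyleqF1} I would exploit the strong convexity of the model minimized by $T(x)$. Consider $m_x(y):=f(x)+\langle \nabla f(x),y-x\rangle+\frac12\norm{y-x}_L^2+\psi(y)$, which is $1$-strongly convex with respect to $\norm{\cdot}_L$ and is minimized at $y=T(x)$; it satisfies $m_x(x)=F(x)$, while the descent inequality \eqref{a:smoothness} of Assumption~\ref{ass1} (applied to the pair $(T(x),x)$) gives $m_x(T(x))\ge F(T(x))$. Specializing the strong-convexity inequality $m_x(x)\ge m_x(T(x))+\frac12\norm{x-T(x)}_L^2$ then yields $F(x)\ge F(T(x))+\frac12 r^2$, which is the first part of \eqref{a:TLyleqF1}; the second part is just $F(T(x))\ge F^\star$.

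The heart of the argument is \eqref{a:normTLdistx}, and here I would route through a subgradient error bound rather than through function-value/distance estimates. Since \eqref{a:TLyleqF1} gives $F(T(x))\le F(x)$, the point $T(x)$ lies in $[F\le F(x)]$, so Assumption~\ref{ass2} applies at $T(x)$ with constant $\mu$. Let $x^\star$ be the $\norm{\cdot}_L$-projection of $T(x)$ onto $\cX_\star$. For any subgradient $s\in\partial F(T(x))$, convexity gives $F(T(x))-F^\star\le \langle s,T(x)-x^\star\rangle\le \norm{s}_{1/L}\dist_L(T(x),\cX_\star)$, while \eqref{a:strconv2} gives $\frac{\mu}{2}\dist_L(T(x),\cX_\star)^2\le F(T(x))-F^\star$; combining them produces the error bound $\dist_L(T(x),\cX_\star)\le \frac{2}{\mu}\norm{s}_{1/L}$. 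It then remains to exhibit a subgradient of controlled norm. The optimality condition for the prox gives $\mathrm{diag}(L)(x-T(x))-\nabla f(x)\in\partial\psi(T(x))$, hence, using $\partial F=\nabla f+\partial\psi$,
\[
s:=\nabla f(T(x))-\nabla f(x)+\mathrm{diag}(L)(x-T(x))\in\partial F(T(x)).
\]
From $\norm{\mathrm{diag}(L)(x-T(x))}_{1/L}=r$ together with the Lipschitz-gradient consequence of Assumption~\ref{ass1} for convex $f$, namely $\norm{\nabla f(T(x))-\nabla f(x)}_{1/L}\le \norm{T(x)-x}_L=r$, I obtain $\norm{s}_{1/L}\le 2r$, and the error bound gives \eqref{a:normTLdistx}, i.e.\ $\dist_L(T(x),\cX_\star)\le \frac{4}{\mu}r$. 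Finally, \eqref{a:FTLy1} drops out of what is already assembled: $F(T(x))-F^\star\le \norm{s}_{1/L}\dist_L(T(x),\cX_\star)\le 2r\cdot \frac{4}{\mu}r=\frac{8}{\mu}r^2$.

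I expect the main obstacle to be obtaining the clean constants $4$ and $8$ uniformly in $\mu$. The tempting elementary route---bounding $F(T(x))-F^\star$ via the prox inequality $F(T(x))\le F(y)+\frac12\norm{y-x}_L^2-\frac12\norm{y-T(x)}_L^2$ at $y=x^\star$ and then solving a quadratic inequality in $\dist_L(T(x),\cX_\star)$---is too lossy: it only delivers the stated constants when $\mu$ is bounded (roughly $\mu\le 8$), yet $\mu_F(L,x)$ need not be small for composite problems in which $\psi$ carries curvature. Passing instead through a subgradient of $F$ at $T(x)$, and controlling the discrepancy $\nabla f(T(x))-\nabla f(x)$ by the co-coercivity/Lipschitz-gradient form of Assumption~\ref{ass1} in the weighted dual norm $\norm{\cdot}_{1/L}$, is what makes the bounds hold with no restriction on $\mu$. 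The one technical point I would verify carefully is exactly this equivalence between the descent inequality \eqref{a:smoothness} and the dual-norm Lipschitz bound on $\nabla f$.
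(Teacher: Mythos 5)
Your proof is correct and follows essentially the same route as the paper: bound $F(T(x))-F^\star$ by a subgradient at $T(x)$ whose dual norm is at most $2\norm{x-T(x)}_L$, then combine with the quadratic growth bound at $T(x)$ (valid there by the descent property) to get the error bound and, by back-substitution, the constant $8/\mu$. The only difference is that the paper imports both key ingredients --- the sufficient-decrease inequality and the subgradient bound --- directly from Theorem~1 of Nesterov's composite-minimization paper, whereas you re-derive them from the strong convexity of the prox model and the dual-norm Lipschitz property of $\nabla f$; both derivations are sound.
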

\begin{proof}
The inequality~\eqref{a:TLyleqF1} follows directly from~\cite[Theorem 1]{Nesterov:2007composite}.
By the convexity of $F$,  for any $q \in \partial F(T(x)) $ and $ x^\star \in \cX_\star$,
\[
 \langle q, T(x) - x^\star\rangle \geq F(T(x)) - F^\star, \enspace \forall q \in \partial F(T(x)), x^\star \in \cX_\star.\]
Furthermore, by \cite[Theorem 1]{Nesterov:2007composite}, for any $q \in \partial F(T(x)) $ and $ x^\star \in \cX_\star$,
\begin{align*}
2\norm{x - T(x) }_L \norm{T(x)  - x^\star}_L {\geq} \langle q, T(x) - x^\star\rangle.
\end{align*}
Therefore,
\begin{align}\label{a:yTLyFTLy1}
2  \norm{x - T(x) }_L \dist_L(T(x),\cX_\star) {\geq}  F(T(x)) - F^\star .
\end{align}
By~\eqref{a:TLyleqF1}, we know that $F(T(x))\leq F(x)$ and in view of~\eqref{a:strconv2},
$$
 F(T(x)) - F^\star \geq \frac{\mu_F(L,x)}{2}\dist^2_L(T(x), \cX_\star).
$$
Combining the latter two inequalities, we get:
\begin{align}\label{a:distqv1}
2 \norm{x - T(x) }_L  {\geq}  \frac{\mu_F(L,x)}{2} \dist_L(T(x),\cX_\star) .
\end{align}
Plugging~\eqref{a:distqv1} back to~\eqref{a:yTLyFTLy1} we get~\eqref{a:FTLy1}.
\end{proof}
\begin{remark}
Condition~\eqref{a:normTLdistx} is usually referred to as an \textit{error bound condition}.
It was proved in~\cite{DrusvyatskLewis} that under Assumption~\ref{ass1}, the error bound condition is equivalent to the quadratic growth condition~\eqref{a:strconv2}. 
\end{remark}

\begin{corollary}
Suppose Assumptions~\ref{ass1} and~\ref{ass2} hold and
denote $\mu_F=\mu_F(L,x_0)$.
The iterates of Algorithm~\ref{alg:fixedrestart} satisfy for all $t\geq 1$,
\begin{align}\label{a:fixedine1}
\|T(x_{tK})-x_{tK} \|_L^2 \leq  {\theta^2_{K-1}} \left( \min\left(\frac{\theta^2_{K-1}}{\mu_F},  \frac{1}{1 + \frac{\mu_F}{2\theta_{K-1}^2}} \right)\right)^{{t-1}} \dist_L(x_0, \cX_\star)^2.
\end{align}
Moreover, if  there is $u_0\in \R^n$ such that $x_0=T(u_0)$ and denote $\mu_F'=\mu_F(L,u_0)$, then
\begin{align}\label{a:fixedine2}
\|T(x_{tK})-x_{tK} \|_L^2 \leq \frac{16}{\mu_F'}
\left(\frac{\theta^2_{K-1}}{\mu_F'}\right) \left( \min\left(\frac{\theta^2_{K-1}}{\mu_F'},  \frac{1}{1 + \frac{\mu_F'}{2\theta_{K-1}^2}} \right)\right)^{{t-1}}\|x_0-u_0 \|_L^2.
\end{align}
\end{corollary}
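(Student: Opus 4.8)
The plan is to derive both inequalities from the convergence results already in hand, treating them in turn.

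For \eqref{a:fixedine1}, the key observation is that $x_{tK}$ is the output of a single block of FISTA/APG started from $x_{(t-1)K}$, so I would apply the fundamental inequality \eqref{eq:itcompl_fistadist} of Corollary~\ref{coro:fista_basicdist} to that block (with $K$ inner iterations, so that $\theta_{k-1}$ specializes to $\theta_{K-1}$). Dropping the nonnegative term $\tfrac12\dist_L(z_K,\cX_\star)^2$ leaves
\[
F(x_{tK})-F^\star \leq \frac{\theta_{K-1}^2}{2}\dist_L(x_{(t-1)K},\cX_\star)^2 .
\]
Then the left inequality of \eqref{a:TLyleqF1}, applied at $x=x_{tK}$, gives $\|T(x_{tK})-x_{tK}\|_L^2 \leq 2(F(x_{tK})-F^\star) \leq \theta_{K-1}^2\,\dist_L(x_{(t-1)K},\cX_\star)^2$. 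Finally Corollary~\ref{coro:inearconvergenceofFISTArestart} bounds $\dist_L(x_{(t-1)K},\cX_\star)^2$ by $r^{\,t-1}\dist_L(x_0,\cX_\star)^2$, where $r$ is exactly the $\min$ appearing in \eqref{a:fixedine1}; chaining the three estimates yields \eqref{a:fixedine1}.

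For \eqref{a:fixedine2}, the extra hypothesis $x_0=T(u_0)$ is used to trade the distance to $\cX_\star$ for $\|x_0-u_0\|_L$: applying the error bound condition \eqref{a:normTLdistx} at $x=u_0$ gives $\dist_L(x_0,\cX_\star)^2 = \dist_L(T(u_0),\cX_\star)^2 \leq \tfrac{16}{(\mu_F')^2}\|u_0-x_0\|_L^2$. To re-express \eqref{a:fixedine1} in terms of $\mu_F'=\mu_F(L,u_0)$ instead of $\mu_F=\mu_F(L,x_0)$, I would note that $F(x_0)=F(T(u_0))\leq F(u_0)$ (again by the left inequality of \eqref{a:TLyleqF1}), so $[F\leq F(x_0)]\subseteq[F\leq F(u_0)]$ and hence $\mu_F\geq\mu_F'$, the same monotonicity of $\mu_F(L,\cdot)$ used in Corollary~\ref{coro:fista_basic}. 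Since the factor $r(\mu)=\min\bigl(\theta_{K-1}^2/\mu,\,(1+\mu/(2\theta_{K-1}^2))^{-1}\bigr)$ is decreasing in $\mu$ and lies in $(0,1]$, we have $r(\mu_F)^{t-1}\leq r(\mu_F')^{t-1}$, so \eqref{a:fixedine1} still holds after replacing $\mu_F$ by $\mu_F'$. Substituting the bound on $\dist_L(x_0,\cX_\star)^2$ and regrouping $16\theta_{K-1}^2/(\mu_F')^2 = (16/\mu_F')(\theta_{K-1}^2/\mu_F')$ then gives \eqref{a:fixedine2}.

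The computations themselves are routine; the only delicate point is the passage from $\mu_F$ to $\mu_F'$, which rests on two distinct monotonicity facts — that $\mu_F(L,\cdot)$ increases along nested sublevel sets (via $F(T(u_0))\leq F(u_0)$), and that the contraction factor $r(\mu)$ decreases in $\mu$ so that raising it to the nonnegative power $t-1$ preserves the inequality. I expect this bookkeeping, rather than any substantive estimate, to be where care is needed.
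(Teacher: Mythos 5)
Your proof is correct and follows essentially the same route as the paper: \eqref{a:TLyleqF1} plus \eqref{eq:itcompl_fistadist} applied to the last restart block, then Corollary~\ref{coro:inearconvergenceofFISTArestart}, and for \eqref{a:fixedine2} the error bound \eqref{a:normTLdistx} at $u_0$ combined with $\mu_F'\leq\mu_F$. You are in fact more explicit than the paper about why $\mu_F'\leq\mu_F$ (nesting of sublevel sets via $F(T(u_0))\leq F(u_0)$) and why the contraction factor's monotonicity in $\mu$ lets you replace $\mu_F$ by $\mu_F'$, which the paper leaves implicit.
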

\begin{proof}
By~\eqref{a:TLyleqF1} and~\eqref{eq:itcompl_fistadist} we have
$$
\|T(x_{tK})-x_{tK}\|^2_L\leq 2(F(x_{tK})-F^\star)\leq \theta^2_{K-1}\dist_L(x_{(t-1)K},\cX_\star)^2.
$$
Now applying  Corollary~\ref{coro:inearconvergenceofFISTArestart} we get~\eqref{a:fixedine1}.
If in addition $x_0=T(u_0)$, then in view of~\eqref{a:normTLdistx},
$$
\dist_L(x_{0},\cX_\star)^2 \leq \left(\frac{4}{\mu_F'}\right)^2 \|x_0-u_0 \|_L^2.
$$
The second inequality~\eqref{a:fixedine2}  then follows from combining the latter inequality, \eqref{a:fixedine1} and the fact that $\mu_F' \leq \mu_F$.
\end{proof}

\subsection{Adaptively restarted algorithm}

Inequality~\eqref{a:fixedine2} provides a way to test whether our guess on $\mu_F(L,x_0)$ is too large. Indeed, let $x_0=T(u_0)$. Then if $\mu \leq \mu_F(L,u_0)\leq \mu_F(L,x_0)$ and we run Algorithm~\ref{alg:fixedrestart} with $K=K(\mu)$, then necessarily we have
\begin{align}\label{a:fixedine3}
\|T(x_{tK})-x_{tK} \|_L^2 \leq \frac{16}{\mu}
\left(\frac{\theta^2_{K-1}}{\mu}\right) \left( \min\left(\frac{\theta^2_{K-1}}{\mu},  \frac{1}{1 + \frac{\mu}{2\theta_{K-1}^2}} \right)\right)^{{t-1}}\|x_0-u_0 \|_L^2.
\end{align}
It is essential that both sides of~\eqref{a:fixedine3} are computable, so that we can check this inequality for each estimate $\mu$. If~\eqref{a:fixedine3} does not hold, then we know that $\mu>\mu_F(L,u_0)$. The idea was originally proposed by Nesterov in~\cite{Nesterov:2007composite} and later generalized in~\cite{Lin2015}, where instead of restarting, they incorporate the estimate $\mu$ into the update of $\theta_k$. As a result, the complexity analysis only works for strongly convex objective function and seems not to hold under Assumption~\ref{ass2}.

Our adatively restarted algorithm is described in Algorithm~\ref{alg:approxresdeter12}. We start from an initial estimate $\mu_0$, and restart Algorithm~\ref{FISTA} or~\ref{APG} with period $K(\mu_s)$ defined in~\eqref{a:defK1}. Note that  by~\eqref{a:thetakmulessthane},
$$
 \min\left(\frac{\theta^2_{K_s-1}}{\mu_s},  \frac{1}{1 + \frac{\mu_s}{2\theta_{K_s-1}^2}}\right)=\frac{\theta^2_{K_s-1}}{\mu_s}.
$$ At the end of each restarting period, we test condition~\eqref{a:fixedine3}, the opposite of which is given by the first inequality at Line~\ref{line:compute_u}.
 If it holds then we continue with the same estimate $\mu_s$ and thus the same restarting period, otherwise we decrease $\mu_s$ by one half and repeat. Our stopping criteria is based on the norm of proximal gradient,  same as in related work~\cite{Lin2015, LiuYang17}. 
\begin{algorithm}
\begin{algorithmic}[1]
\STATE{\textbf{Parameters:}  $\epsilon$, $\mu_0$}
\STATE   $s\leftarrow -1$, $t_s\leftarrow 0$
\STATE $x_{s,t_s}\leftarrow x_{0}$
\STATE $x_{s+1,0}\leftarrow T(x_{s,t_s})$
\REPEAT
\STATE $s\leftarrow s+1$
\STATE $C_{s}\leftarrow \frac{ 16 \|x_{s,0}-x_{s-1,t_{s-1}}\|_L^2} {\mu_s}$ \label{line:defineCs}
\STATE $K_s\leftarrow K(\mu_s)$
\STATE $t\leftarrow 0$
\REPEAT \label{line:start}
\STATE $x_{s, t+1}\leftarrow \mbox{Algorithm}~\ref{FISTA}(x_{s,t}, K_s)$ or $x_{s, t+1}\leftarrow \mbox{Algorithm}~\ref{APG}(x_{s, t}, K_s)$   \label{line:fistaorapg}
\STATE $t\leftarrow t+1$
\UNTIL{$\|T(x_{s,t})-x_{s,t}\|_L^2 > { C_s(\theta_{K_s-1}^2/\mu_s)^t}$ or $\|T(x_{s,t})-x_{s, t}\|_L^2 \leq \epsilon $\label{line:compute_u}}
\STATE $t_s\leftarrow t$
\STATE $x_{s+1,0}\leftarrow T(x_{s,t_s})$
\STATE $\mu_{s+1}\leftarrow \mu_s/2$
\UNTIL{$ \|x_{s+1,0}-x_{s, t_s}\|_L^2 \leq \epsilon$ \label{line:stop_test}}
\STATE $\hat x\leftarrow T(x_{s,t_s})$, $ \hat s \leftarrow s$, $\hat N\leftarrow 1+\displaystyle\sum_{s=0}^{\hat s} (t_sK_s+1)$
\end{algorithmic}
\caption{$(\hat x,  \hat s,\hat N)\leftarrow$AdaRES($x_{0}$)}\label{alg:approxresdeter12}
\end{algorithm}

\begin{remark}
Although  Line \ref{line:compute_u}
of Algorithm \ref{alg:approxresdeter12} requires to compute the proximal gradient mapping $T(x_{s,t})$, one should remark that 
this $T(x_{s,t})$ is in fact given by the first iteration of Algorithm~\ref{FISTA}($x_{s,t},K_s$) or Algorithm~\ref{APG}($x_{s,t},K_s$). Hence, except for $t=t_s$, the computation of $T(x_{s,t})$ does not incur additional computational cost. Therefore, the output $\hat N$ of Algorithm~\ref{alg:approxresdeter12} records the total number of proximal gradient mappings needed to get $\|T(x)-x\|^2_L\leq \epsilon$.
\end{remark}

We first show the following non-blowout property for Algorithm~\ref{alg:approxresdeter12}.
\begin{lemma}\label{l:nonblowoutalgo}
For any $-1\leq s\leq \hat s$ and $0\leq t\leq t_s$ we have
$$
F(T(x_{s,t}))\leq F(x_{s,t})\leq  F(x_0).
$$
\end{lemma}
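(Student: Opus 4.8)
**The plan is to prove this non-blowout property by induction, combining the non-blowout property of the inner accelerated scheme (Proposition~\ref{prop:nonblowout}) with the descent property of the proximal mapping $T$ (the first inequality in~\eqref{a:TLyleqF1}).** The statement to establish is that $F(T(x_{s,t})) \le F(x_{s,t}) \le F(x_0)$ for all valid $s$ and $t$. The first inequality, $F(T(x_{s,t})) \le F(x_{s,t})$, is immediate and independent of the induction: it follows directly from~\eqref{a:TLyleqF1}, which asserts $\norm{T(x)-x}_L^2 \le 2(F(x)-F(T(x)))$ and hence $F(T(x)) \le F(x)$ for every $x$. So the real content is the chain $F(x_{s,t}) \le F(x_0)$, which I would prove by a double induction on the outer index $s$ and the inner index $t$.

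For the inner induction, fix an epoch $s$ and suppose $F(x_{s,0}) \le F(x_0)$. Each update at Line~\ref{line:fistaorapg} sets $x_{s,t+1} \leftarrow \mathrm{FISTA}(x_{s,t},K_s)$ or $\mathrm{APG}(x_{s,t},K_s)$, so by Proposition~\ref{prop:nonblowout} applied with initial point $x_{s,t}$ we have $F(x_{s,t+1}) \le F(x_{s,t})$. Chaining these gives $F(x_{s,t}) \le F(x_{s,0})$ for all $0 \le t \le t_s$, and combined with the inductive hypothesis $F(x_{s,0}) \le F(x_0)$ we obtain $F(x_{s,t}) \le F(x_0)$. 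The remaining piece is to propagate the bound across epochs: I need $F(x_{s+1,0}) \le F(x_0)$ to start the next epoch. Since the algorithm sets $x_{s+1,0} \leftarrow T(x_{s,t_s})$, the descent property of $T$ from~\eqref{a:TLyleqF1} gives $F(x_{s+1,0}) = F(T(x_{s,t_s})) \le F(x_{s,t_s}) \le F(x_0)$, closing the outer induction. The base case is the initialization $x_{-1,t_{-1}} = x_0$ and $x_{0,0} = T(x_{-1,t_{-1}}) = T(x_0)$, for which $F(x_{0,0}) = F(T(x_0)) \le F(x_0)$ again by~\eqref{a:TLyleqF1}.

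I do not anticipate a genuinely hard step here; the proof is essentially bookkeeping that interleaves two already-established one-step facts. \textbf{The only point requiring care is the interaction between the two nested loops and the index shift at the epoch boundary}, namely verifying that the quantity carried from the end of epoch $s$ (the point $x_{s,t_s}$, then mapped through $T$) is exactly what initializes epoch $s+1$, so that the descent property of $T$ bridges the two epochs cleanly. One must also make sure Proposition~\ref{prop:nonblowout} is applicable at every inner step, which it is because each call to FISTA or APG is a fresh run started from the current iterate with $\theta_0=1$, so the non-blowout guarantee holds relative to that call's own starting point. Once these index alignments are checked, the inequality $F(x_{s,t}) \le F(x_0)$ follows for every $-1 \le s \le \hat s$ and $0 \le t \le t_s$, and prepending the trivial descent $F(T(x_{s,t})) \le F(x_{s,t})$ yields the full claim.
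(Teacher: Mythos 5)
Your proof is correct and follows essentially the same route as the paper's: chain Proposition~\ref{prop:nonblowout} over the inner iterations of each epoch, use the descent property $F(T(x))\leq F(x)$ from~\eqref{a:TLyleqF1} to bridge consecutive epochs, and induct on $s$ starting from $x_{-1,0}=x_0$. The paper's own argument is just a terser version of your double induction.
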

\begin{proof}
Since $x_{s,t+1}$ is the output of Algorithm~\ref{FISTA} or~\ref{APG} with input $x_{s,t}$, we know  by Proposition~\ref{prop:nonblowout} that
$$
F(x_{s,t_s})\leq F(x_{s,t_s-1}) \leq \ldots \leq F(x_{s,0}).
$$
By~\eqref{a:TLyleqF1},
$$
F(x_{s+1,0})=F(T(x_{s,t_s}))\leq F(x_{s,t_s}) \leq F(x_{s,0}).
$$
The right inequality then follows by induction since $x_{-1,0}=x_0$. The left inequality follows from~\eqref{a:TLyleqF1}.
\end{proof}

\begin{lemma}\label{l:FboundTk}
For any $0\leq s\leq \hat s$,
if $\mu_s\leq \mu_F(L,x_0)$, then
\begin{align}\label{a:FxFu0}
F(x_{s,t})-F^\star &\leq \left(\frac{\theta^2_{K_s-1}}{\mu_s} \right)^t(F(x_{s,0})-F^\star),\enspace \forall 1\leq t\leq t_s.
\end{align}
\end{lemma}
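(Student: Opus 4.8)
The plan is to reduce this to a stage-wise version of Proposition~\ref{prop:fixedres}. Within stage $s$ the iterates $x_{s,0}, x_{s,1}, \ldots, x_{s,t_s}$ are produced by repeatedly applying Algorithm~\ref{FISTA} or~\ref{APG} with the fixed restarting period $K_s$, so each inner step is exactly one call of the accelerated scheme. The only difference from Proposition~\ref{prop:fixedres} is that the target bound is stated in terms of the \emph{estimate} $\mu_s$ rather than the true growth constant, and the hypothesis $\mu_s \leq \mu_F(L,x_0)$ is precisely what lets us trade one for the other.

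First I would invoke Corollary~\ref{coro:fista_basic} on each inner step. Since $x_{s,t+1}$ is the output of Algorithm~\ref{FISTA} or~\ref{APG} with input $(x_{s,t}, K_s)$, the corollary gives, for every $0 \leq t \leq t_s - 1$,
\begin{equation*}
F(x_{s,t+1}) - F^\star \leq \frac{\theta^2_{K_s-1}}{\mu_F(L, x_{s,t})}\bigl(F(x_{s,t}) - F^\star\bigr).
\end{equation*}
Next I would control the denominator uniformly. By the non-blowout property of Lemma~\ref{l:nonblowoutalgo}, every iterate satisfies $F(x_{s,t}) \leq F(x_0)$, so $x_{s,t} \in [F \leq F(x_0)]$. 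Consequently $[F \leq F(x_{s,t})] \subseteq [F \leq F(x_0)]$, and since the quadratic growth condition~\eqref{a:strconv2} on the larger set implies the same inequality with the same constant on any smaller set, we obtain $\mu_F(L, x_{s,t}) \geq \mu_F(L, x_0)$. Combining this with the hypothesis $\mu_s \leq \mu_F(L, x_0)$ yields $\mu_F(L, x_{s,t}) \geq \mu_s$, whence
\begin{equation*}
F(x_{s,t+1}) - F^\star \leq \frac{\theta^2_{K_s-1}}{\mu_s}\bigl(F(x_{s,t}) - F^\star\bigr), \qquad 0 \leq t \leq t_s - 1.
\end{equation*}

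Finally, iterating this one-step contraction from $t = 0$ establishes~\eqref{a:FxFu0} by an immediate induction on $t$. I do not expect any genuine obstacle here: the argument is the same telescoping used in Proposition~\ref{prop:fixedres}, and the only point requiring a little care is the monotonicity of the growth constant under shrinking sublevel sets, which is exactly the mechanism already exploited (through Proposition~\ref{prop:nonblowout}) in the proofs of Corollary~\ref{coro:fista_basic} and Proposition~\ref{prop:fixedres}.
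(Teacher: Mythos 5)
Your proof is correct and follows essentially the same route as the paper: the paper's one-line proof cites Proposition~\ref{prop:fixedres} together with Lemma~\ref{l:nonblowoutalgo}, and what you have written is precisely the unrolled version of that argument (one-step contraction from Corollary~\ref{coro:fista_basic}, monotonicity of $\mu_F$ under shrinking sublevel sets via the non-blowout property, then telescoping). Nothing is missing.
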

\begin{proof}
This is a direct application of Proposition~\ref{prop:fixedres} and Lemma~\ref{l:nonblowoutalgo}.
\end{proof}
From Lemma~\ref{l:nonblowoutalgo} and~\ref{l:FboundTk}, we obtain immediately the following key results for the complexity bound of Algorithm~\ref{alg:approxresdeter12}.
\begin{corollary}\label{coro:uminusxCk}
Let $C_s$ be the constant defined in Line~\ref{line:defineCs} of Algorithm~\ref{alg:approxresdeter12}. We have 
\begin{align}\label{a:uminusxCk1}
C_s\leq \frac{32(F(x_0)-F^\star)}{\mu_s},\enspace \forall s\geq 0.
\end{align}
 If for any $0\leq s\leq \hat s$ we have $\mu_s\leq \mu_F(L,x_0)$, then
 \begin{align}\label{a:uminusxCk3}
 \|T(x_{s,t})-x_{s,t} \|_L^2 \leq C_s \left(\frac{\theta^2_{K_s-1}}{\mu_s}\right)^t,\enspace \forall 0\leq t\leq t_s,
 \end{align}
 and
\begin{align}\label{a:uminusxCk2}
\|T(x_{s,t})-x_{s,t} \|_L^2\leq 2e^{-t}(F(x_0)-F^\star),\enspace \forall 0\leq t\leq t_s.
\end{align}
\end{corollary}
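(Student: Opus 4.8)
The plan is to derive all three estimates by feeding the two a priori bounds on the gradient-mapping norm from Proposition~\ref{prop:estimatedistXstarbb1} into the epoch-wise machinery already assembled in Lemmas~\ref{l:nonblowoutalgo} and~\ref{l:FboundTk}. The single structural fact tying the epochs together is the update rule $x_{s,0}=T(x_{s-1,t_{s-1}})$, which lets me treat the quantity $\|x_{s,0}-x_{s-1,t_{s-1}}\|_L$ appearing in the definition of $C_s$ as a gradient-mapping norm at the previous iterate, hence subject to Proposition~\ref{prop:estimatedistXstarbb1}.

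For the unconditional bound~\eqref{a:uminusxCk1}, I would rewrite $C_s=\tfrac{16}{\mu_s}\|T(x_{s-1,t_{s-1}})-x_{s-1,t_{s-1}}\|_L^2$ and apply the left inequality of~\eqref{a:TLyleqF1} to get $\|T(x_{s-1,t_{s-1}})-x_{s-1,t_{s-1}}\|_L^2\le 2(F(x_{s-1,t_{s-1}})-F^\star)$. Lemma~\ref{l:nonblowoutalgo} then gives $F(x_{s-1,t_{s-1}})\le F(x_0)$, and combining yields $C_s\le \tfrac{32}{\mu_s}(F(x_0)-F^\star)$. This step needs no assumption on $\mu_s$, matching the statement.

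For~\eqref{a:uminusxCk2}, assuming $\mu_s\le\mu_F(L,x_0)$, I would again start from~\eqref{a:TLyleqF1} to pass from the gradient-mapping norm at $x_{s,t}$ to $2(F(x_{s,t})-F^\star)$, invoke Lemma~\ref{l:FboundTk} to bound $F(x_{s,t})-F^\star\le (\theta_{K_s-1}^2/\mu_s)^t(F(x_{s,0})-F^\star)$, use $\theta_{K_s-1}^2/\mu_s\le e^{-1}$ from~\eqref{a:thetakmulessthane} (valid since $K_s=K(\mu_s)$), and finally use $F(x_{s,0})\le F(x_0)$ from Lemma~\ref{l:nonblowoutalgo}; the case $t=0$ is immediate. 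For~\eqref{a:uminusxCk3} the same two steps reduce matters to showing $2(F(x_{s,0})-F^\star)\le C_s$, which is both the $t=0$ case and the key reduction, and this is where the right inequality~\eqref{a:FTLy1} enters: applied at $x_{s-1,t_{s-1}}$ it gives $F(x_{s,0})-F^\star\le \tfrac{8}{\mu_F(L,x_{s-1,t_{s-1}})}\|x_{s-1,t_{s-1}}-x_{s,0}\|_L^2$.

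The one genuinely delicate point is this last display: I need $\mu_F(L,x_{s-1,t_{s-1}})\ge\mu_s$ so that the denominator can be replaced by $\mu_s$ and the bound collapses exactly into $C_s$. This is supplied by the monotonicity of $\mu_F(L,\cdot)$ along sublevel sets (if $F(x)\le F(y)$ then $\mu_F(L,x)\ge\mu_F(L,y)$, immediate from Assumption~\ref{ass2}): Lemma~\ref{l:nonblowoutalgo} gives $F(x_{s-1,t_{s-1}})\le F(x_0)$, hence $\mu_F(L,x_{s-1,t_{s-1}})\ge\mu_F(L,x_0)\ge\mu_s$ under the standing hypothesis. I expect the main source of friction to be the bookkeeping of the epoch-index shift $s-1\mapsto s$ together with correctly tracking which $\mu$ dominates; the inequalities themselves are routine once the right-hand member of Proposition~\ref{prop:estimatedistXstarbb1} is matched to the definition of $C_s$.
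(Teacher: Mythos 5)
Your proposal is correct and follows exactly the route the paper takes: \eqref{a:TLyleqF1} plus Lemma~\ref{l:nonblowoutalgo} for the bound on $C_s$, and \eqref{a:TLyleqF1}, \eqref{a:FTLy1}, Lemma~\ref{l:FboundTk} and the bound $\theta_{K_s-1}^2/\mu_s\leq e^{-1}$ for the other two. The one step the paper leaves implicit --- that $\mu_F(L,x_{s-1,t_{s-1}})\geq\mu_F(L,x_0)\geq\mu_s$ is needed to turn \eqref{a:FTLy1} into $2(F(x_{s,0})-F^\star)\leq C_s$ --- you identify and justify correctly via the sublevel-set monotonicity of $\mu_F(L,\cdot)$.
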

\begin{proof}
The bound on $C_s$ follows from~\eqref{a:TLyleqF1} applied at $x_{s-1,t_{s-1}}$ and Lemma~\ref{l:nonblowoutalgo}. The second bound can be derived from~\eqref{a:TLyleqF1},~\eqref{a:FTLy1} and  Lemma~\ref{l:FboundTk}. For the third one, it suffices to apply Lemma~\ref{l:nonblowoutalgo}, Lemma~\ref{l:FboundTk} together with~\eqref{a:TLyleqF1} and the fact that 
\begin{align}\label{a:thetamue}
\frac{\theta^2_{K_s-1}}{\mu_s}\leq e^{-1},\enspace \forall s\geq 0.
\end{align}
\end{proof}

\begin{proposition}\label{th:adapresdeter}
Consider Algorithm~\ref{alg:approxresdeter12}. If for any $0\leq s\leq \hat s$ we have
$\mu_s\leq \mu_F(L,x_0)$, then  $\hat s=s$ and
\begin{align}\label{a:adapresmuoleq}
t_s\leq \ceil*{\ln \frac{2(F(x_0)-F^\star)}{\epsilon}}
.\end{align}
\end{proposition}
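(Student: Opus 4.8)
The plan is to show that the hypothesis $\mu_s\leq\mu_F(L,x_0)$ forces the inner loop at stage $s$ to exit through the $\epsilon$-test rather than through the failure test on Line~\ref{line:compute_u}, that this makes the outer loop stop at the same stage, and finally to quantify how many inner restarts are needed. Throughout I would rely on Corollary~\ref{coro:uminusxCk}, whose hypothesis is exactly the assumed bound $\mu_s\leq\mu_F(L,x_0)$.

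First I would use~\eqref{a:uminusxCk3}, which gives $\|T(x_{s,t})-x_{s,t}\|_L^2\leq C_s(\theta_{K_s-1}^2/\mu_s)^t$ for every $0\leq t\leq t_s$. This is precisely the negation of the first clause of the stopping test on Line~\ref{line:compute_u}, so that clause can never be the cause of the loop exiting: the loop must terminate through the second clause, i.e.\ with $\|T(x_{s,t_s})-x_{s,t_s}\|_L^2\leq\epsilon$. Next I would propagate this to the outer loop. Since $x_{s+1,0}=T(x_{s,t_s})$, the quantity tested on Line~\ref{line:stop_test} equals $\|x_{s+1,0}-x_{s,t_s}\|_L^2=\|T(x_{s,t_s})-x_{s,t_s}\|_L^2\leq\epsilon$, so the outer \texttt{REPEAT} also stops at this stage. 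This yields $\hat s=s$.

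To bound $t_s$ I would invoke~\eqref{a:uminusxCk2}, namely $\|T(x_{s,t})-x_{s,t}\|_L^2\leq 2e^{-t}(F(x_0)-F^\star)$ for all $0\leq t\leq t_s$. The right-hand side falls to $\epsilon$ as soon as $t\geq\ln\frac{2(F(x_0)-F^\star)}{\epsilon}$, hence at the latest when $t=\ceil*{\ln\frac{2(F(x_0)-F^\star)}{\epsilon}}$. Combined with the fact that the failure clause is inactive, the $\epsilon$-clause must then trigger no later than this index, giving the claimed $t_s\leq\ceil*{\ln\frac{2(F(x_0)-F^\star)}{\epsilon}}$.

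The hard part will be avoiding a circularity between the definition of $t_s$ (the exit time of the inner loop) and the range $0\leq t\leq t_s$ over which the bounds of Corollary~\ref{coro:uminusxCk} are asserted. I would phrase the final step contrapositively: if the loop had \emph{not} exited by the index $\ceil*{\ln\frac{2(F(x_0)-F^\star)}{\epsilon}}$, then this index would lie in the range $0\leq t\leq t_s$, so~\eqref{a:uminusxCk2} would apply there and force $\|T(x_{s,t})-x_{s,t}\|_L^2\leq\epsilon$, making the $\epsilon$-clause hold and contradicting non-exit. One must also keep in mind that $\mu_{s}=\mu_0/2^{s}$ is decreasing, so assuming $\mu_s\leq\mu_F(L,x_0)$ for all $0\leq s\leq\hat s$ is consistent, and the essential work is the single-stage argument above; the remaining verifications (the inactivity of the failure clause throughout the stage, and $F(x_{s,0})\leq F(x_0)$ via Lemma~\ref{l:nonblowoutalgo}) are already packaged inside Corollary~\ref{coro:uminusxCk}.
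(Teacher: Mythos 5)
Your proposal is correct and follows essentially the same route as the paper's own proof: use~\eqref{a:uminusxCk3} to rule out the failure clause so the inner loop must exit via the $\epsilon$-test (which, since $x_{s+1,0}=T(x_{s,t_s})$, also triggers the outer stopping test and gives $\hat s=s$), then bound $t_s$ via~\eqref{a:uminusxCk2}. Your explicit handling of the potential circularity between $t_s$ and the range of validity of Corollary~\ref{coro:uminusxCk} is a careful elaboration of a point the paper leaves implicit, not a different argument.
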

\begin{proof}
If $\mu_s\leq \mu_F(L,x_0)$, by~\eqref{a:uminusxCk3}, when the inner loop terminates we necessarily have $$
\|T(x_{s,t})-x_{s,t}\|_L^2\leq \epsilon.
$$
Therefore $\hat s=s$. Then, \eqref{a:adapresmuoleq} is derived from~\eqref{a:uminusxCk2}.
\end{proof}

\begin{theorem}
\label{th:adapresdeter2}
Suppose Assumptions~\ref{ass1} and~\ref{ass2} hold.
Consider the adaptively restar\-ted accelerated gradient  Algorithm~\ref{alg:approxresdeter12}. 
If the initial estimate of the local quadratic error bound satisfies   $\mu_0\leq \mu_F(L,x_0)$ then the number of iterations $\hat N$ is bounded by 
\begin{align}\label{a:adapresmuoleq33}
\hat N\leq  \ceil*{ {2}\sqrt{\frac{e}{\mu_0}}-1}
\ceil*{\ln \frac{2(F(x_0)-F^\star)}{\epsilon}}+2
.\end{align}
If $\mu_0>\mu_F(L,x_0)$, then 
\begin{align}
\label{a:adapresmuogeq33}
\hat N \leq &1+\ceil*{\log_2 \frac{\mu_0}{\mu_F(L,x_0)}}+{\frac{2\sqrt{e}}{\sqrt{2}-1}\left(\sqrt{\frac{2}{{\mu_F(L,x_0)}}}-\sqrt{\frac{1}{\mu_0}}\right)}\ceil*{\ln \frac{32(F(x_0)-F^\star)}{\epsilon\mu_F(L,x_0)}}\notag \\&\qquad +\ceil*{ {\frac{2\sqrt{2e}}{\sqrt{\mu_F(L,x_0)}}} }\ceil*{\ln \frac{2(F(x_0)-F^\star)}{\epsilon}} .
\end{align}
\end{theorem}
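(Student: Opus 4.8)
The plan is to follow the two-regime split in the statement, treating the initial estimate $\mu_0\le\mu_F(L,x_0)$ separately from $\mu_0>\mu_F(L,x_0)$, and to use Proposition~\ref{th:adapresdeter} as the engine for every stage in which the current estimate has already become small enough. Throughout I would use that the estimates halve, $\mu_s=\mu_0/2^s$, that $K_s=K(\mu_s)$ as in~\eqref{a:defK1}, and that $\hat N=1+\sum_{s=0}^{\hat s}(t_sK_s+1)$.

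For the easy case $\mu_0\le\mu_F(L,x_0)$, I would argue that since $(\mu_s)$ is nonincreasing, $\mu_s\le\mu_F(L,x_0)$ for all $s$, so the hypothesis of Proposition~\ref{th:adapresdeter} holds. That proposition gives $\hat s=0$ together with $t_0\le\lceil\ln(2(F(x_0)-F^\star)/\epsilon)\rceil$. Substituting $\hat s=0$ into the formula for $\hat N$ gives $\hat N=2+t_0K_0$, and bounding $K_0=K(\mu_0)=\lceil2\sqrt{e/\mu_0}-1\rceil$ yields~\eqref{a:adapresmuoleq33} at once.

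The substantive case is $\mu_0>\mu_F(L,x_0)$. I would first control the number of stages: because $\mu_s=\mu_0/2^s$, the first index $\bar s$ with $\mu_{\bar s}\le\mu_F(L,x_0)$ equals $\lceil\log_2(\mu_0/\mu_F(L,x_0))\rceil$ and satisfies $\mu_{\bar s}>\mu_F(L,x_0)/2$; by Proposition~\ref{th:adapresdeter} the algorithm must terminate once the estimate reaches $\mu_{\bar s}\le\mu_F(L,x_0)$, so $\hat s\le\bar s$. This accounts for the $\lceil\log_2(\mu_0/\mu_F(L,x_0))\rceil$ term and the leading additive constant of~\eqref{a:adapresmuogeq33}. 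The heart of the argument, and the step I expect to be the main obstacle, is bounding the number $t_s$ of inner restarts in the \emph{overestimating} stages $s<\bar s$, where $\mu_s>\mu_F(L,x_0)$ and none of the convergence guarantees such as~\eqref{a:uminusxCk3} applies. The key idea is that the stopping test on Line~\ref{line:compute_u} is self-certifying: as long as the inner loop has not stopped at an iteration $t<t_s$, both branches of the UNTIL are false, so
\[
\epsilon<\|T(x_{s,t})-x_{s,t}\|_L^2\le C_s(\theta_{K_s-1}^2/\mu_s)^t\le C_s e^{-t},
\]
where the last inequality is~\eqref{a:thetamue}. Taking $t=t_s-1$ gives $t_s\le\lceil\ln(C_s/\epsilon)\rceil$, and combining with $C_s\le 32(F(x_0)-F^\star)/\mu_s$ from~\eqref{a:uminusxCk1} together with $\mu_s>\mu_F(L,x_0)$ produces the uniform bound $t_s\le\lceil\ln(32(F(x_0)-F^\star)/(\mu_F(L,x_0)\epsilon))\rceil$ for every $s<\bar s$.

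It then remains to assemble the pieces. For $s<\bar s$ I would bound $K_s=K(\mu_s)\le 2\sqrt{e/\mu_s}$ and sum the geometric series $\sum_{s=0}^{\bar s-1}2\sqrt{e/\mu_s}$, which, using $\mu_s=\mu_0/2^s$, sums to $\frac{2\sqrt e}{\sqrt2-1}\big(\mu_{\bar s}^{-1/2}-\mu_0^{-1/2}\big)$; the bound $\mu_{\bar s}>\mu_F(L,x_0)/2$ then yields the factor $\frac{2\sqrt e}{\sqrt2-1}\big(\sqrt{2/\mu_F(L,x_0)}-\sqrt{1/\mu_0}\big)$ multiplying $\lceil\ln(32(F(x_0)-F^\star)/(\mu_F(L,x_0)\epsilon))\rceil$, i.e.\ the middle term. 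For the terminal stage $s=\bar s$, where $\mu_{\bar s}\le\mu_F(L,x_0)$, Proposition~\ref{th:adapresdeter} gives $t_{\bar s}\le\lceil\ln(2(F(x_0)-F^\star)/\epsilon)\rceil$, while $\mu_{\bar s}>\mu_F(L,x_0)/2$ gives $K_{\bar s}\le\lceil 2\sqrt{2e}/\sqrt{\mu_F(L,x_0)}\rceil$, producing the last term. Summing $t_sK_s+1$ over $0\le s\le\hat s$ and adding the leading $1$ in $\hat N$ then delivers~\eqref{a:adapresmuogeq33}, up to the accounting of the per-stage additive constants.
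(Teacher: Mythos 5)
Your proposal is correct and follows essentially the same route as the paper's proof: Proposition~\ref{th:adapresdeter} for the terminal stage and the easy case, the self-certifying nature of the test on Line~\ref{line:compute_u} combined with the bound $C_s\leq 32(F(x_0)-F^\star)/\mu_s$ to bound $t_s$ in the overestimating stages, and the geometric sum of $K_s$ with $\mu_{\bar s}>\mu_F(L,x_0)/2$ to assemble the final estimate. The only differences are cosmetic (your $\bar s$ is the paper's $\bar l$, and the per-stage $+1$ bookkeeping, which you flag, matches the paper's).
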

\begin{proof}
The first case is a direct application of Proposition~\ref{th:adapresdeter}.

Let us now concentrate on the case $\mu_0 > \mu_F(L, x_0)$.
For simplicity we denote $\mu_F=\mu_F(L,x_0)$.
Define $
\bar l  = \ceil*{\log_2\mu_0-\log_2\mu_F} \geq 1.
$ Then $\mu_{\bar l}\leq \mu_F$ and by Proposition~\ref{th:adapresdeter}, we know that $\hat s\leq \bar l$ and if $\hat s=\bar l$,
\begin{align}\label{a:tbarlbound}
t_{\bar l}\leq \ceil*{\ln \frac{2(F(x_0)-F^\star)}{\epsilon}}.
\end{align}
Now we consider $t_s$ for $0\leq s\leq \bar l-1$. Note that 
$\epsilon<\|T(x_{s,t})-x_{s,t}\|^2_L\leq  C_s(\theta^2_{K_s-1}/\mu_s)^t$ cannot hold for $t$  satisfying
\begin{align}\label{a:CsThe}
C_s(\theta^2_{K_s-1}/\mu_s)^t\leq \epsilon.
\end{align}
By~\eqref{a:uminusxCk1}, 
\begin{align}\label{a:Clupperbound}
C_s  \leq  \frac{32(F(x_0)-F^\star)}{\mu_F}, \enspace  0\leq s \leq \bar l-1.
\end{align}
In view of~\eqref{a:thetamue} and~\eqref{a:Clupperbound}, ~\eqref{a:CsThe} holds for any $t\geq 0$ such that 
$$ 32(F(x_0)-F^\star)e^{-t}\leq \epsilon \mu_F. $$ Therefore, 
\begin{align}\label{atlbound}
t_{s}\leq \ceil*{\ln \frac{32(F(x_0)-F^\star)}{\epsilon\mu_F}},\enspace  0\leq s \leq \bar l-1.
\end{align}
By the definition~\eqref{a:defK1},
\begin{align*}
&K_0+\cdots+K_{\bar l-1}=
\sum_{s=0}^{\bar l -1} \ceil*{ 2\sqrt{\frac{e}{\mu_s}} -1} \leq   \sum_{s=0}^{\bar l -1} 2\sqrt{\frac{2^{s}e}{\mu_0}} 
\\& = {\frac{2\sqrt{e/\mu_0}}{\sqrt{2}-1} \left(\sqrt{2^{{\bar l}}}-1\right)}
\leq {\frac{2\sqrt{e}}{\sqrt{2}-1}\left(\sqrt{\frac{2}{{\mu_F}}}-\sqrt{\frac{1}{\mu_0}}\right)}
\end{align*}
Therefore,
\begin{align}\label{a:Kbarlminusone}
\sum_{s=0}^{\bar l-1} (t_s K_s+1)&\leq {\frac{2\sqrt{e}}{\sqrt{2}-1}\left(\sqrt{\frac{2}{{\mu_F}}}-\sqrt{\frac{1}{\mu_0}}\right)}\ceil*{\ln \frac{32(F(x_0)-F^\star)}{\epsilon\mu_F}}+\bar l 
\end{align}
Combining~\eqref{a:tbarlbound} and~\eqref{a:Kbarlminusone} we get
\begin{align*}
\hat N&\leq 1+ \sum_{k=0}^{\bar l} (t_sK_s+1)
\\&\leq 1+\bar l+ {\frac{2\sqrt{e}}{\sqrt{2}-1}\left(\sqrt{\frac{2}{{\mu_F}}}-\sqrt{\frac{1}{\mu_0}}\right)}\ceil*{\ln \frac{32(F(x_0)-F^\star)}{\epsilon\mu_F}}\\&\qquad+\ceil*{ 2\sqrt{\frac{e}{\mu_{\bar l}}} }\ceil*{\ln \frac{2(F(x_0)-F^\star)}{\epsilon}} 
\end{align*}
 Then~\eqref{a:adapresmuogeq33} follows by noting that $\mu_{\bar l}\geq \mu_F/2$.
\end{proof}

In Table~\ref{tab:comprates} we compare the worst-case complexity bound of four algorithms which adaptively restart accelerated algorithms.  Note that the algorithms proposed by Nesterov~\cite{nesterov2013gradient} and by Lin \& Xiao~\cite{Lin2015} require strong convexity of $F$. However, the algorithm of Liu \& Yang~\cite{LiuYang17} also applies to the case when local H\"olderian error bound condition holds. The latter condition requires the existence of $\theta\geq1$ and a constant $\mu_F(L,x_0,\theta)>0$ such that
\begin{align}\label{a:holderian}
F(x)\geq F^\star +\frac{\mu_F(L,x_0,\theta)}{2}\dist_L(x, \cX_\star)^\theta, \enspace \forall x\in [F\leq F(x_0)].
\end{align}
When $\theta=2$, we recover the local quadratic growth condition. In this case, the algorithm of Liu \& Yang~\cite{LiuYang17} has a complexity bound  $\tilde O(1/\sqrt{\mu_F})$ where $\tilde O$ hides logarithm terms.

We can see  that the analysis of our algorithm leads to a worst case complexity that is $\log(1/\mu_F)$ 
times better than previous work. This will be illustrated also in the
experiment section.

\begin{table}
\centering
\begin{tabular}{|l|c|l|}
\hline Algorithm & Complexity bound & Assumption \\
\hline
Nesterov \cite{nesterov2013gradient} & $O\Big(\frac{1}{\sqrt{\mu_F}}\ln\big(\frac{1}{\mu_F}\big)\ln\big(\frac{1}{\mu_F\epsilon}\big)\Big)$ &  strong convexity\\
\hline
Lin \& Xiao \cite{Lin2015} & $O\Big(\frac{1}{\sqrt{\mu_F}}\ln\big(\frac{1}{\mu_F}\big)\ln\big(\frac{1}{\mu_F\epsilon}\big)\Big)$ &  strong convexity\\
\hline
Liu \& Yang \cite{LiuYang17} & $O\Big(\frac{1}{\sqrt{\mu_F}}\ln\big(\frac{1}{\mu_F}\big)^2\ln\big(\frac{1}{\epsilon}\big)\Big)$& H\"olderian error bound~\eqref{a:holderian} \\
\hline
This work & $O\Big(\frac{1}{\sqrt{\mu_F}}\ln\big(\frac{1}{\mu_F\epsilon}\big)\Big)$ & local quadratic error bound~\eqref{a:strconv2} \\
\hline
\end{tabular}\label{tab:comprates}
\caption{Comparison of the iteration complexity of accelerated gradient methods with adaptation to the local error bound.}
\end{table}

\subsection{A stricter test condition}
The test condition (Line~\ref{line:compute_u}) in Algorithm~\ref{alg:approxresdeter12} can be further strengthened as follows. 

For simplicity denote $\mu_F=\mu_F(L,x_0)$ and
$$
\alpha_s(\mu):=\min\left(\frac{\theta^2_{K_s-1}}{\mu}, \frac{1}{1+\frac{\mu}{2\theta^2_{K_s-1}}}\right).
$$
Let any $0\leq s'\leq s \leq \hat s$, and $1\leq t\leq t_s$. Then for the same reason as~\eqref{a:fixedine2}, we have
\begin{align*}
\|T(x_{s,t})-x_{s,t}\|^2_L\leq \frac{16}{\mu_F} \left(\frac{\theta^2_{K_s-1}}{\mu_F}\right)\left(\alpha_s(\mu_F)\right)^{t-1} \prod_{j=s-1}^{s'}\left(\alpha_j(\mu_F)\right)^{t_{j}} \|x_{s',0}-x_{s'-1,t_{s'-1}}\|_L^2.
\end{align*}
This suggests to replace Line~\ref{line:defineCs} of Algorithm~\ref{alg:approxresdeter12} by
\begin{align}\label{a:newCs}
C_s\leftarrow \frac{16}{\mu_s}\min\left\{\prod_{j=s-1}^{s'}\left(\alpha_j(\mu_s)\right)^{t_{j}} \|x_{s',0}-x_{s'-1,t_{s'-1}}\|_L^2: 0\leq s'\leq s\right\}.
\end{align}
As we  only decrease the value of $C_s$, all the theoretical analysis holds and Theorem~\ref{th:adapresdeter2} is still true with the new $C_s$ defined in~\eqref{a:newCs}. Moreover,
this change allows to identify more quickly a too large $\mu_s$ and thus can improve the  practical performance of the algorithm. 

Furthermore, if  we find that
$$\|T(x_{s,t})-x_{s,t}\|_L^2 > { C_s(\theta_{K_s-1}^2/\mu_s)^t},
$$
then before running Algorithm~\ref{FISTA} or Algorithm~\ref{APG} with $\mu_{s+1}:=\mu_s/2$, we can first do a test on $\mu_{s+1}$, i.e., check the condition
\begin{align}\notag
&\|T(x_{s,t})-x_{s,t}\|^2_L \\&\leq \min\left\{ \frac{16}{\mu_{s+1}} \left(\frac{\theta^2_{K_s-1} }{\mu_{s+1}}\right)\left(\alpha_s(\mu_{s+1})\right)^{t-1}\right. \notag\\&\qquad\qquad \left.\prod_{j=s-1}^{s'}\left(\alpha_j(\mu_{s+1})\right)^{t_{j}} \|x_{s',0}-x_{s'-1,t_{s'-1}}\|_L^2: 0\leq s'\leq s\right\}. \label{a:musplusonetest}
\end{align}
If~\eqref{a:musplusonetest} holds, then we go to Line~\ref{line:start}. Otherwise, $\mu_{s+1}$ is still too large and we decrease it further by one half.

\subsection{Looking for an $\epsilon$-solution}

Instead of an $x$ such that $\norm{T(x)-x}^2 \leq \epsilon$, 
we may be interested in an $x$ such that $F(x) - F^\star \leq \epsilon'$, that is an $\epsilon'$-solution.

In view of \eqref{a:FTLy1}, if $\norm{x - T(x)}_L^2 \leq \frac{\epsilon' \mu_F(L, x)}{8}$, then $F(T(x)) - F^\star \leq \epsilon'$.
As a result, except from the fact that we cannot terminate the algorithm in Line~\ref{line:stop_test},
Algorithm~\ref{alg:approxresdeter12} is applicable with $\epsilon = \frac{\epsilon' \mu_F(L, x)}{8}$.

We then will obtain an $\epsilon'$-solution after a number of iterations at most equal to 
\begin{align*}
& 1+ \ceil*{\frac{2\sqrt{e}}{\sqrt{2}-1}\left(\sqrt{\frac{2}{{\mu_F(L,x_0)}}}-\sqrt{\frac{1}{\mu_0}}\right)}\ceil*{\ln \frac{2^8(F(x_0)-F^\star)}{\epsilon'\mu_F(L,x_0)^2}}\notag \\&\qquad +\ceil*{ {\frac{16\sqrt{2e}}{\sqrt{\mu_F(L,x_0)}}} }\ceil*{\ln \frac{2(F(x_0)-F^\star)}{\epsilon' \mu_F(l, x_0)}} 
\end{align*}
Note that compared to the result of Theorem~\ref{th:adapresdeter2}, we only add a constant factor.

\section{Numerical experiments}
\label{sec:expe}

In this section we present some numerical results to demonstrate the effectiveness of the proposed algorithms.
We apply Algorithm~\ref{alg:approxresdeter12} to solve regression and classification problems which typically take the form of
\begin{align}\label{a:gA}
\min_{x\in \R^n} F(x):= g(Ax)+\psi(x)
\end{align}
where $A\in \R^{m\times n}$, $g:\R^m\rightarrow \R$ has Lipschitz continuous gradient and $\psi: \R^n\rightarrow \R\cup\{ +\infty\}$ is simple. The model includes in particular the 
$L^1$-regularized least squares problem (Lasso) 
and the $L^1$-$L^2$-regularized logistic regression problem. Note that the following problem is dual  to~\eqref{a:gA},
$$
\max_{y\in \R^m} G(y):=-\psi^*(A^\top y)-g^*(-y),
$$ 
where $g^*$ (resp. $\psi^*$) denotes the convex conjugate function of $g$ (resp. $\psi$).
We define the primal dual gap associated to a point $x\in \dom (\psi)$ as:
\begin{align}\label{a:primaldualgap}
F(x)-G(-\alpha(x) A^\top\nabla g(Ax))
\end{align}
where $\alpha(x) \in [0,1]$ is chosen as the largest $\alpha \in [0,1]$ such that 
$G(-\alpha A^\top\nabla g(Ax))<+\infty$.
Note that $x\in \dom(\psi)$ is an optimal solution of~\eqref{a:gA} if and only if the associated primal dual gap~\eqref{a:primaldualgap} equals 0.

We compare five methods: Gradient Descent (GD),  FISTA~\cite{beck2009fista}, AdapAPG~\cite{Lin2015}, AdaAGC~\cite{LiuYang17}, and AdaRES (Algorithm~\ref{alg:approxresdeter12} using FISTA in Line~\ref{line:fistaorapg}). We plot the primal dual gap~\eqref{a:primaldualgap} versus running time. Note that GD and FISTA do not depend on the initial guess of the value $\mu_F(L,x_0)$.

\subsection{Lasso problem}

We  present in Figure~\ref{fig:cputime} the  experimental results for  solving the $L^1$-regularised least squares problem (Lasso):
\begin{align}\label{a:lasso}
\min_{x \in \mathbb{R}^n} \frac{1}{2} \norm{A x - b}^2_2 + \frac{\|A^\top b\|_{\infty}}{\lambda_1} \norm{x}_1,
\end{align}
on the dataset cpusmall$\_$scale with $n=12$ and $m=8192$. The value $L$ is set to be $\tr(A^\top A)$.
We test with $\lambda_1=10^4$, $\lambda_1=10^5$ and $\lambda_1=10^6$. For each value of $\lambda_1$, we vary the initial guess $\mu_0$ from 0.1 to $10^{-5}$.  Compared with AdaAPG and AdaAGC,   Algorithm~\ref{alg:approxresdeter12} seems to be more efficient and less sensitive to the guess of $\mu_F$.

\subsection{Logistic regression}

We present in Figure~\ref{fig:dorothea} the  experimental results for solving the $L^1$-$L^2$ regularized logistic regression problem:
\begin{align}\label{alr}
\min_{x \in \mathbb{R}^n} \frac{\lambda_1}{2 \|A^\top b\|_{\infty}} \sum_{j=1}^m \log(1 + \exp(b_j a_j^\top x)) + \norm{x}_1+\frac{\lambda_2}{2}\|x\|^2,\end{align}
on the dataset dorothea with $n=100,000$ and $m=800$.
In our experiments, we set
$$
\lambda_2=\frac{L}{ 10n}
$$
where $$
L:=\frac{\lambda_1}{ 8\|A^\top b\|_{\infty}}\sum_{j=1}^n\sum_{i=1}^m (b_jA_{ij})^2
$$
is an upper bound of the Lipschitz constant of the function 
$$
f(x)\equiv \frac{\lambda_1}{ 2\|A^\top b\|_{\infty}} \sum_{j=1}^m \log(1 + \exp(b_j a_j^\top x)).
$$
Thus $\mu_F\geq 1/(10n)=10^{-6}$. We test with $\lambda_1=10$, $\lambda_1=100$ and $\lambda_1=1000$. For each value of $\lambda_1$, we vary the initial guess $\mu_0$ from 0.01 to $10^{-6}$. On this problem Algorithm~\ref{alg:approxresdeter12} also outperforms AdaAPG and AdaAGC in all the cases.

\newcommand{\sizefigure}{0.29\linewidth}
\afterpage{
\clearpage
\thispagestyle{empty}

\begin{figure}[htbp]
\centering
\includegraphics[width=\sizefigure]{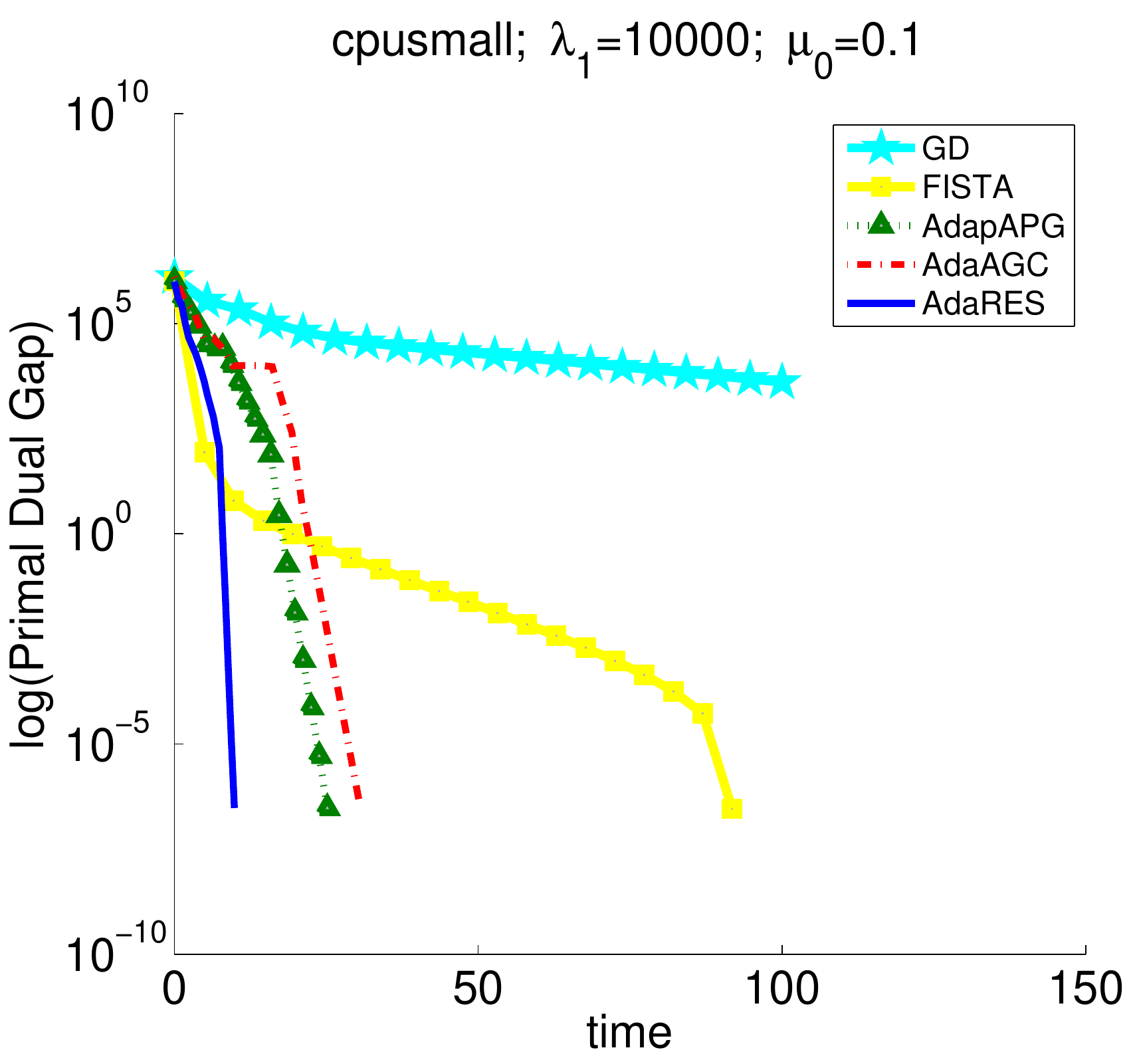}
\includegraphics[width=\sizefigure]{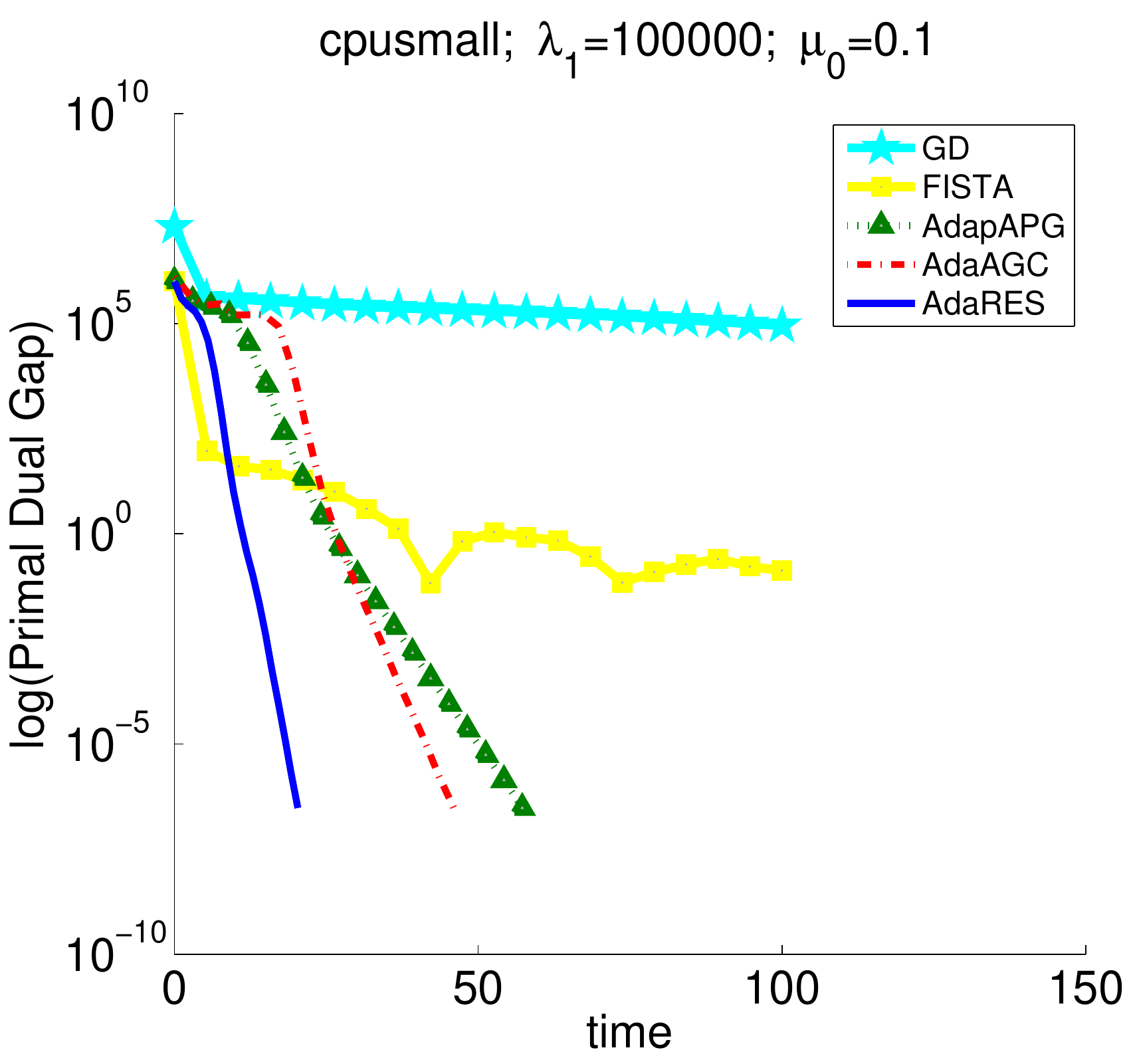}
\includegraphics[width=\sizefigure]{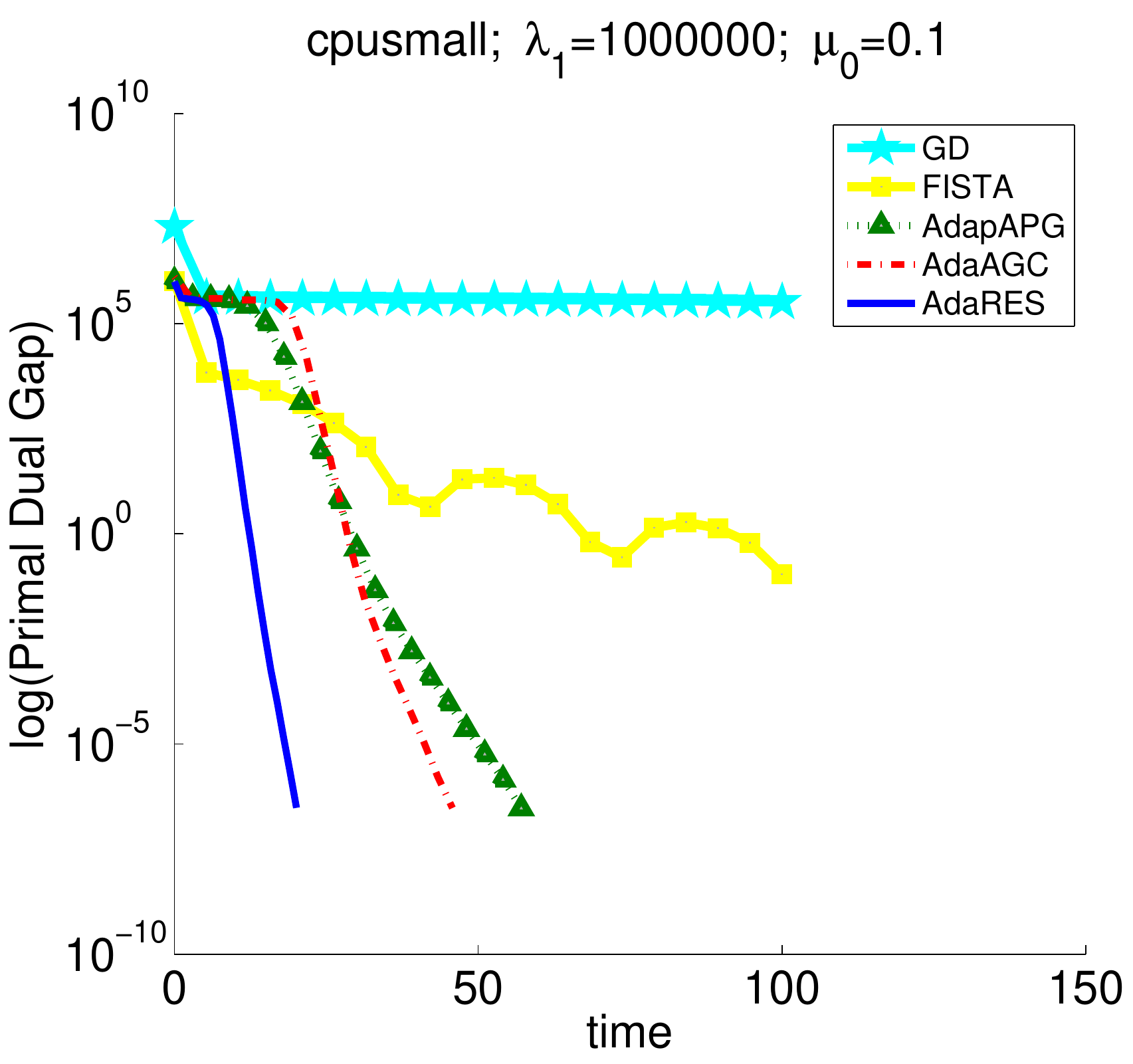}
\includegraphics[width=\sizefigure]{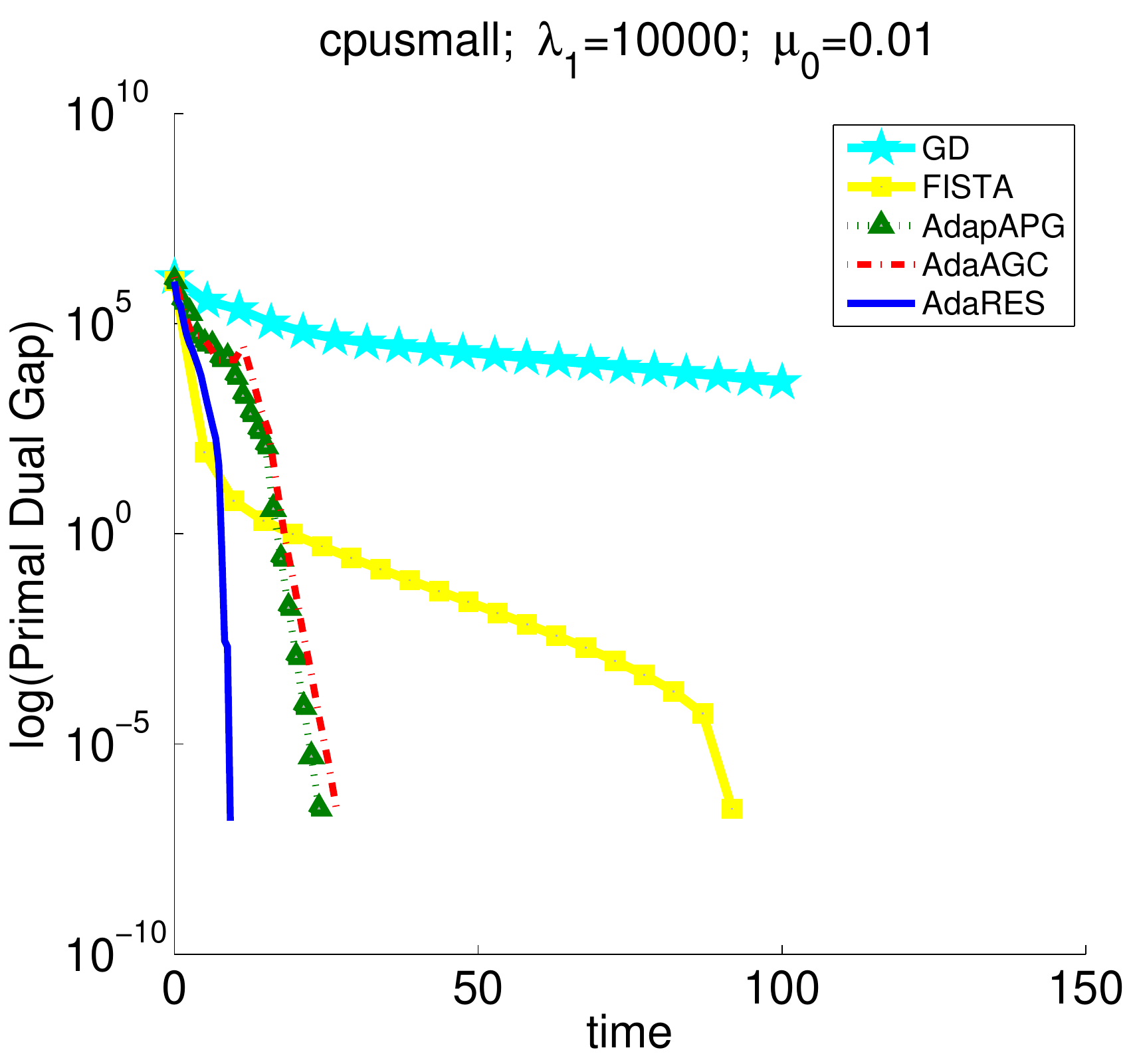}
\includegraphics[width=\sizefigure]{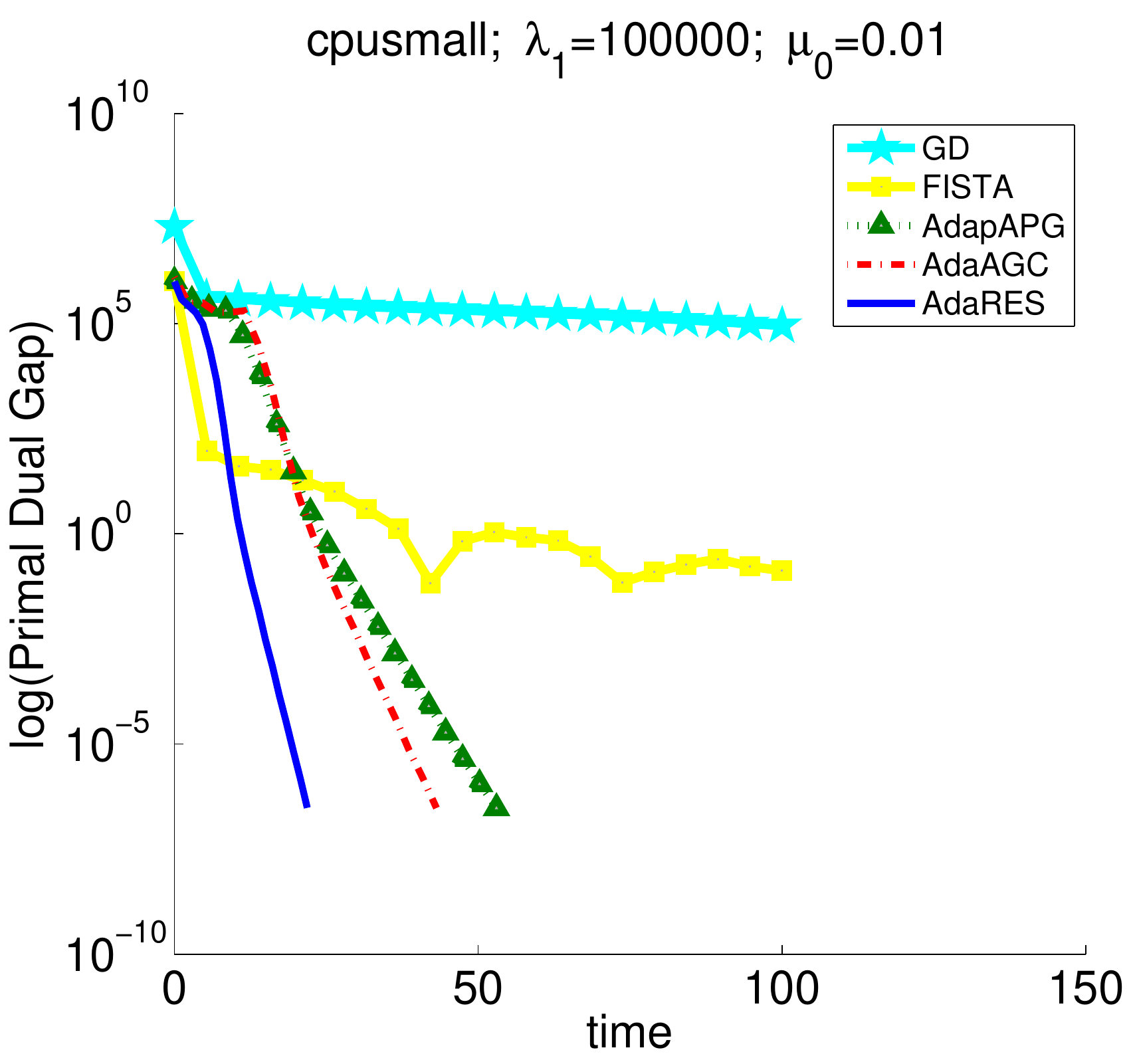}
\includegraphics[width=\sizefigure]{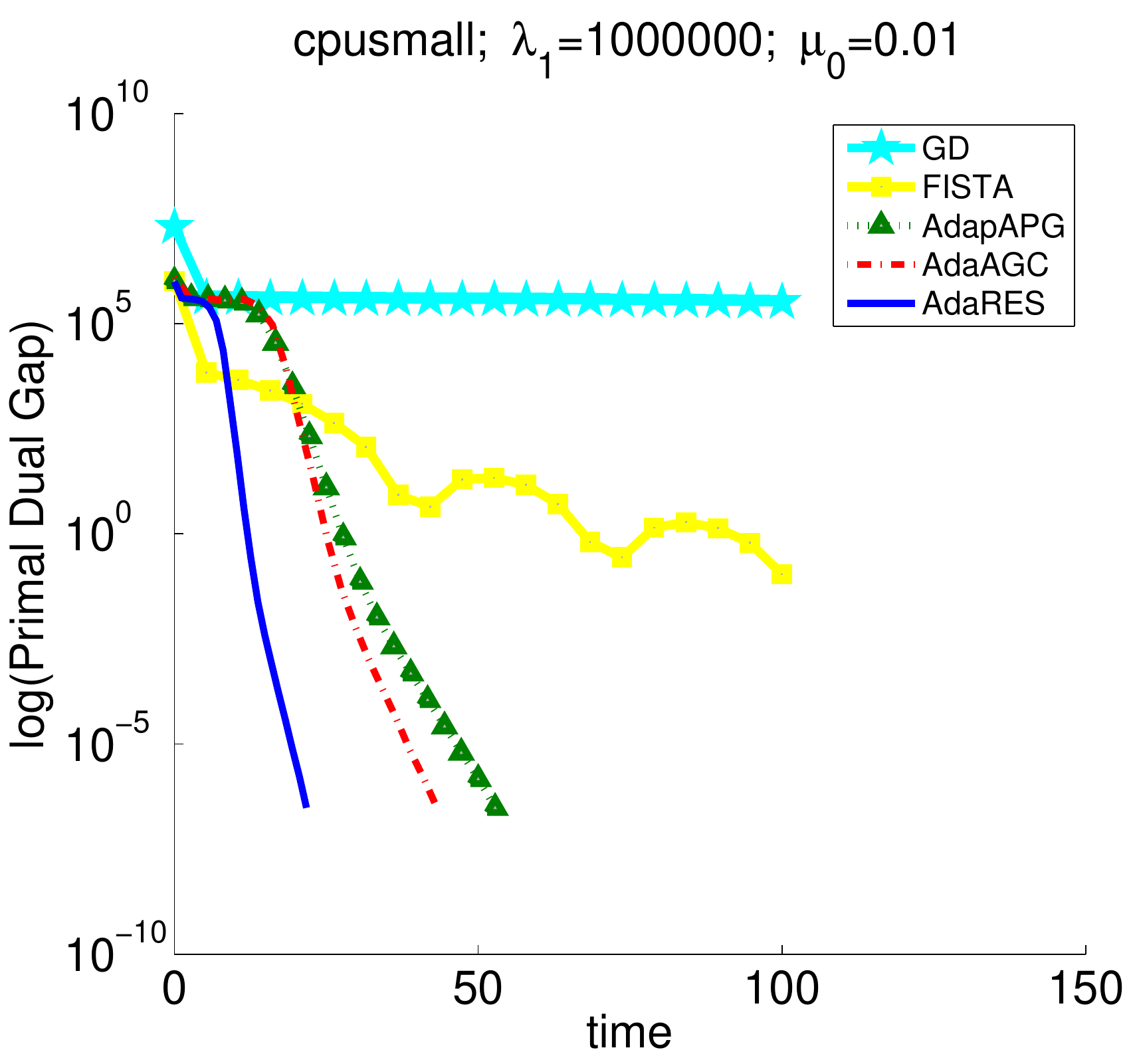}
\includegraphics[width=\sizefigure]{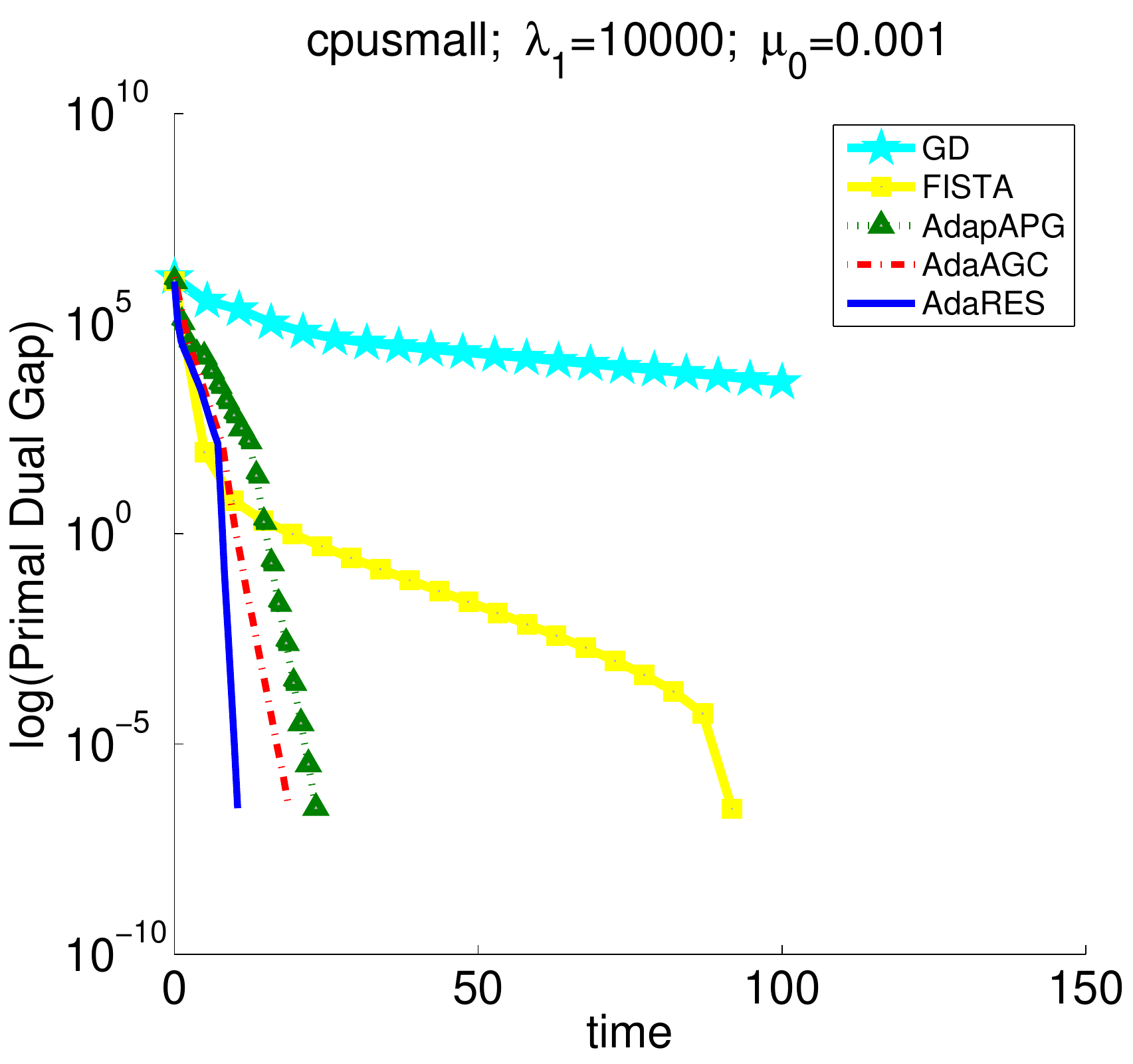}
\includegraphics[width=\sizefigure]{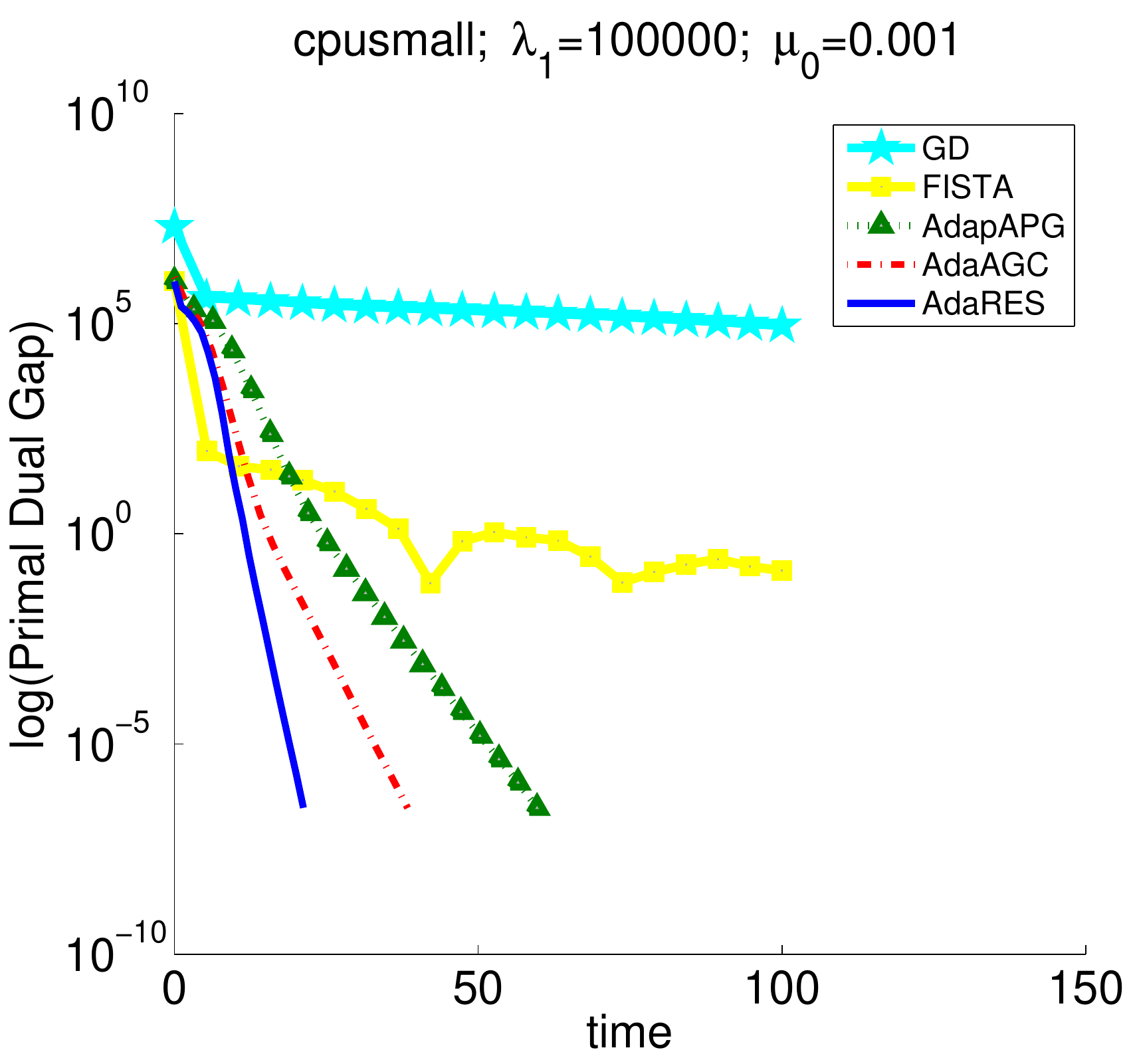}
\includegraphics[width=\sizefigure]{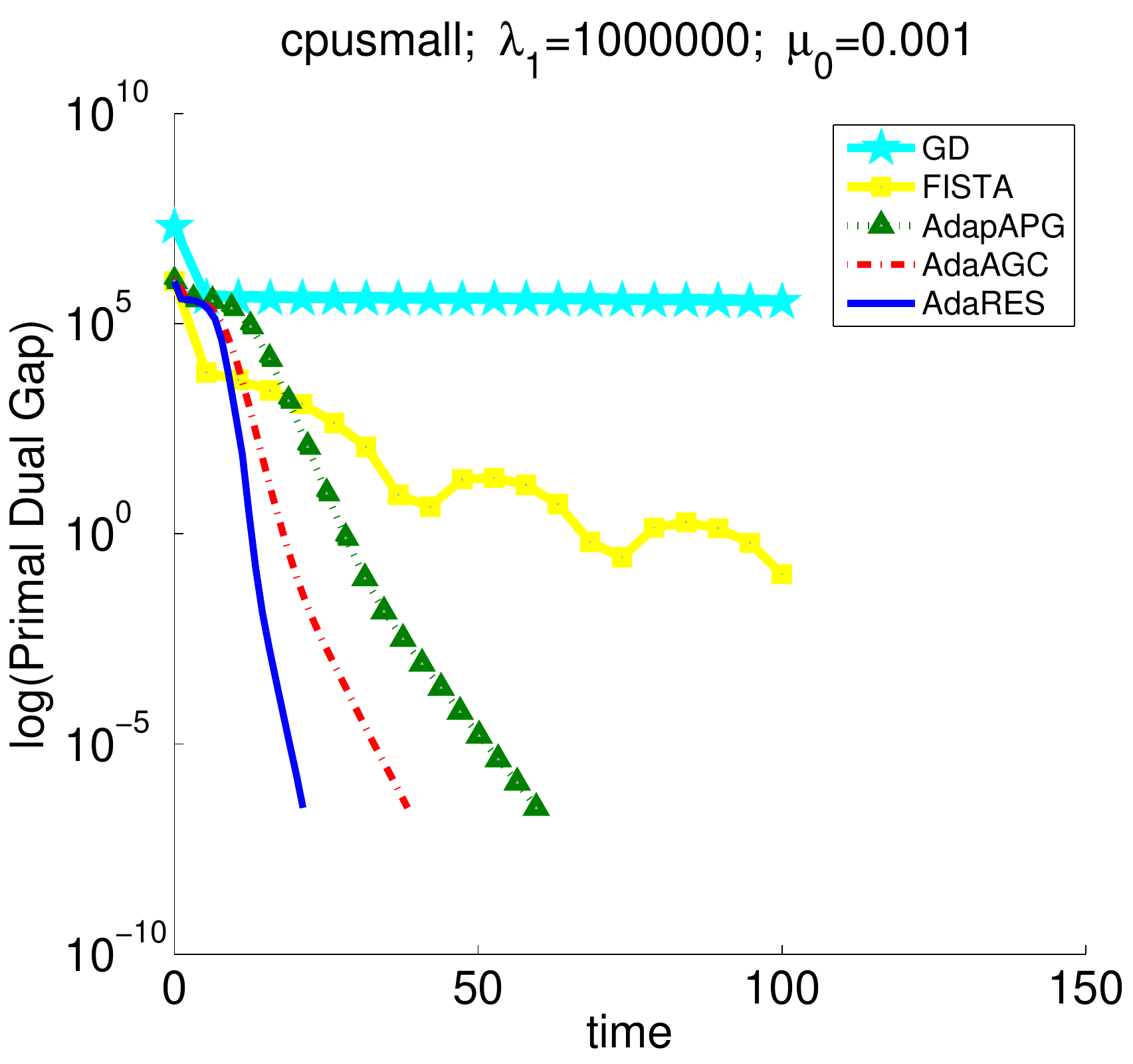}
\includegraphics[width=\sizefigure]{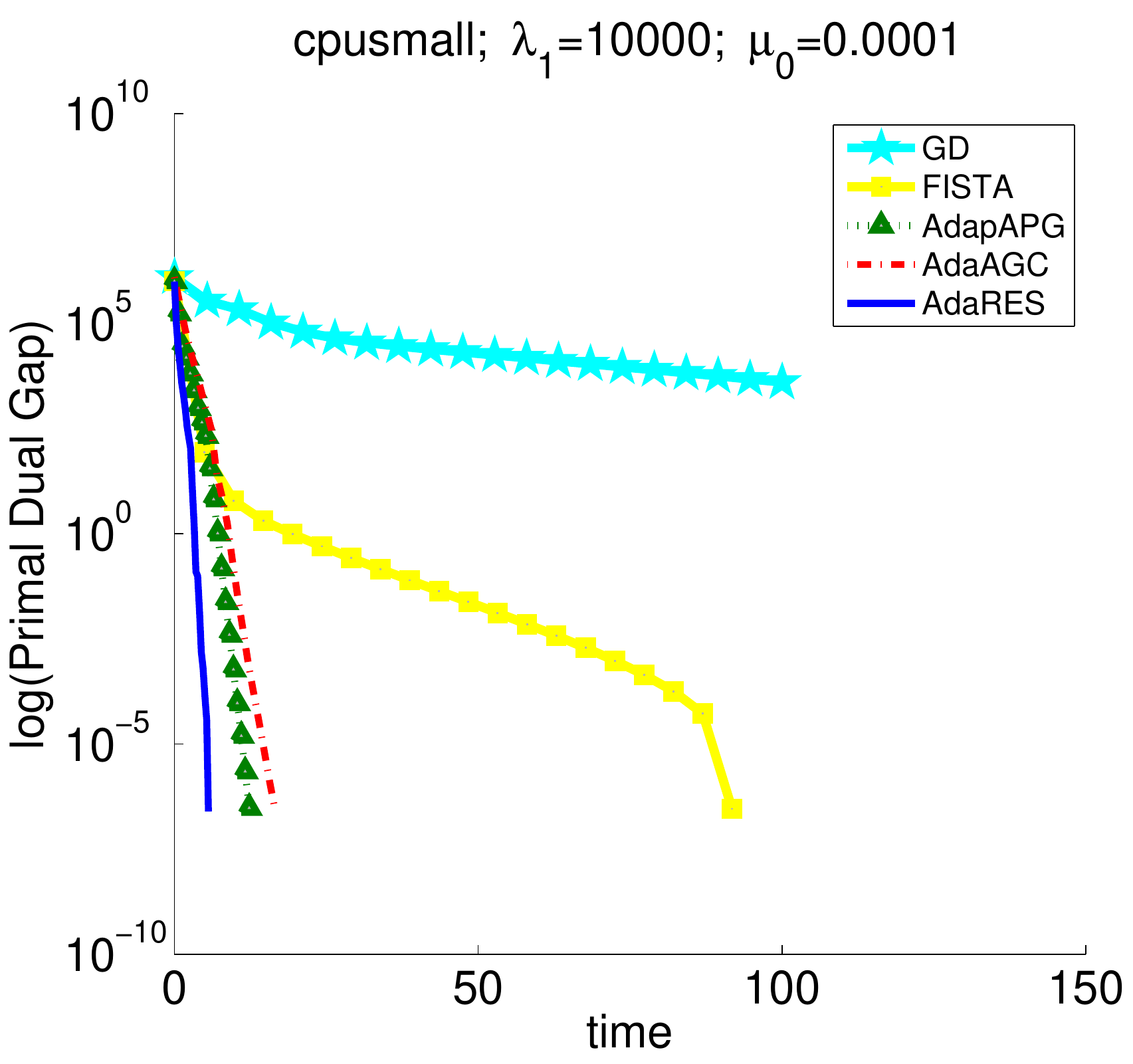}
\includegraphics[width=\sizefigure]{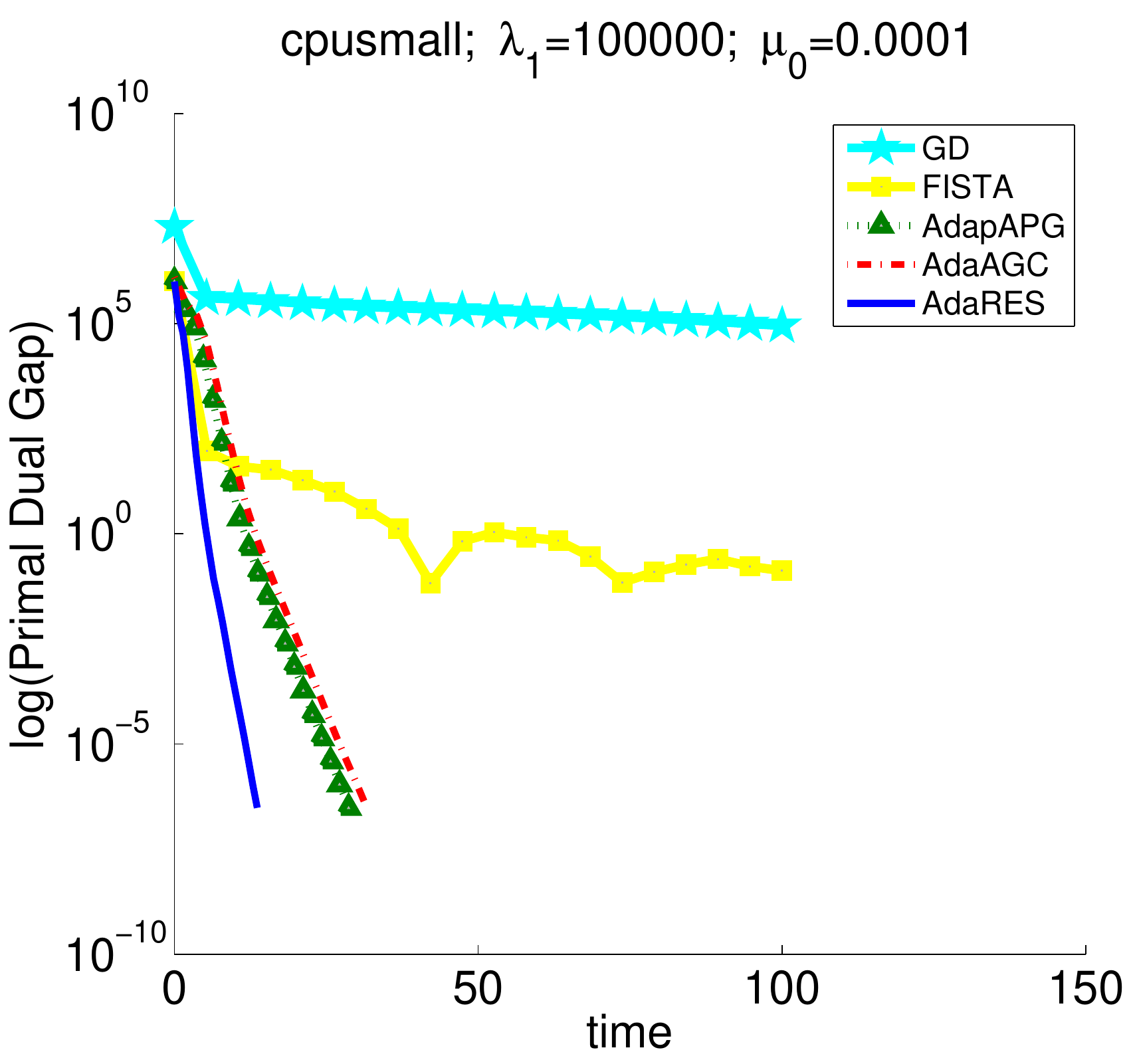}
\includegraphics[width=\sizefigure]{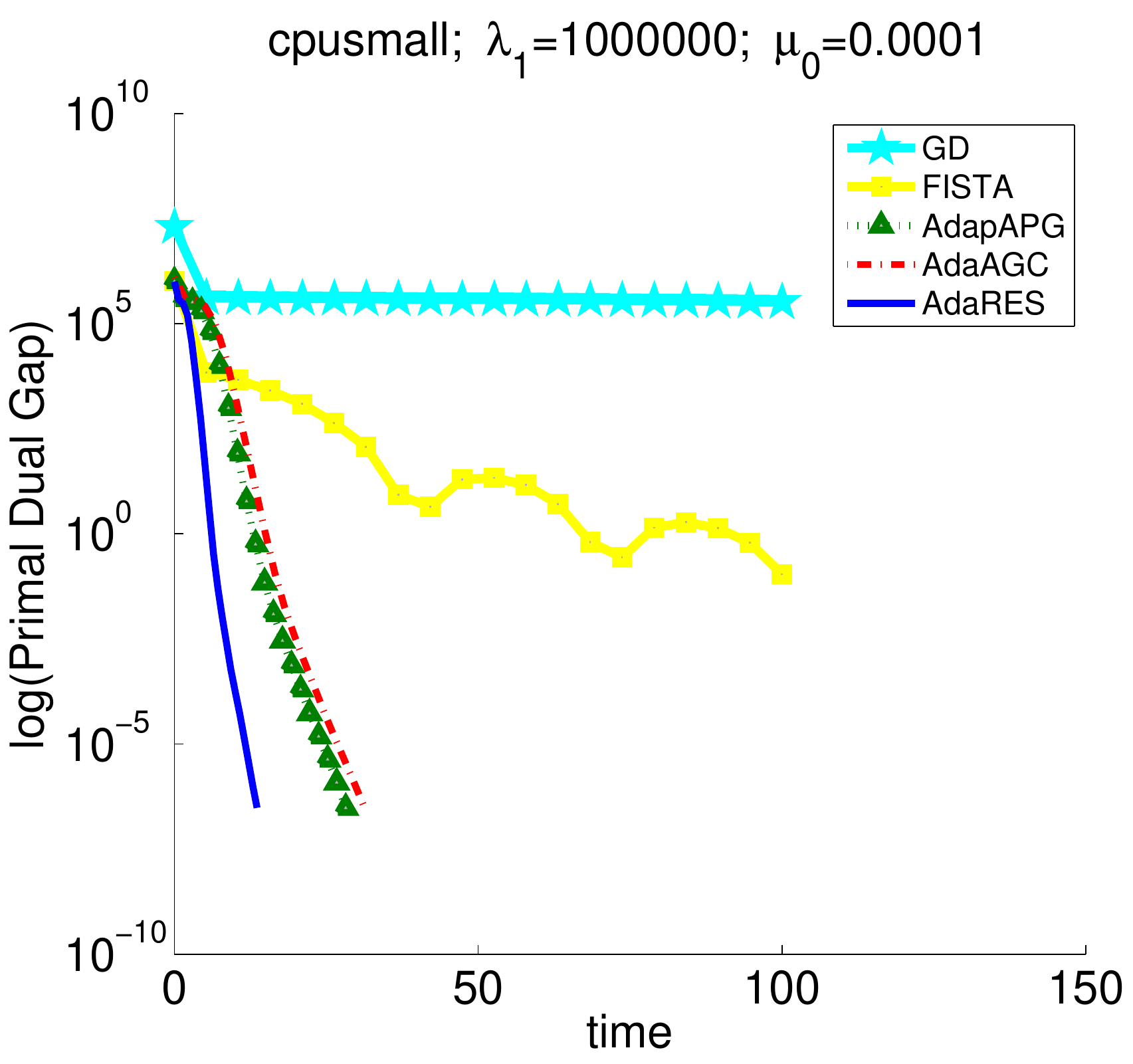}
\includegraphics[width=\sizefigure]{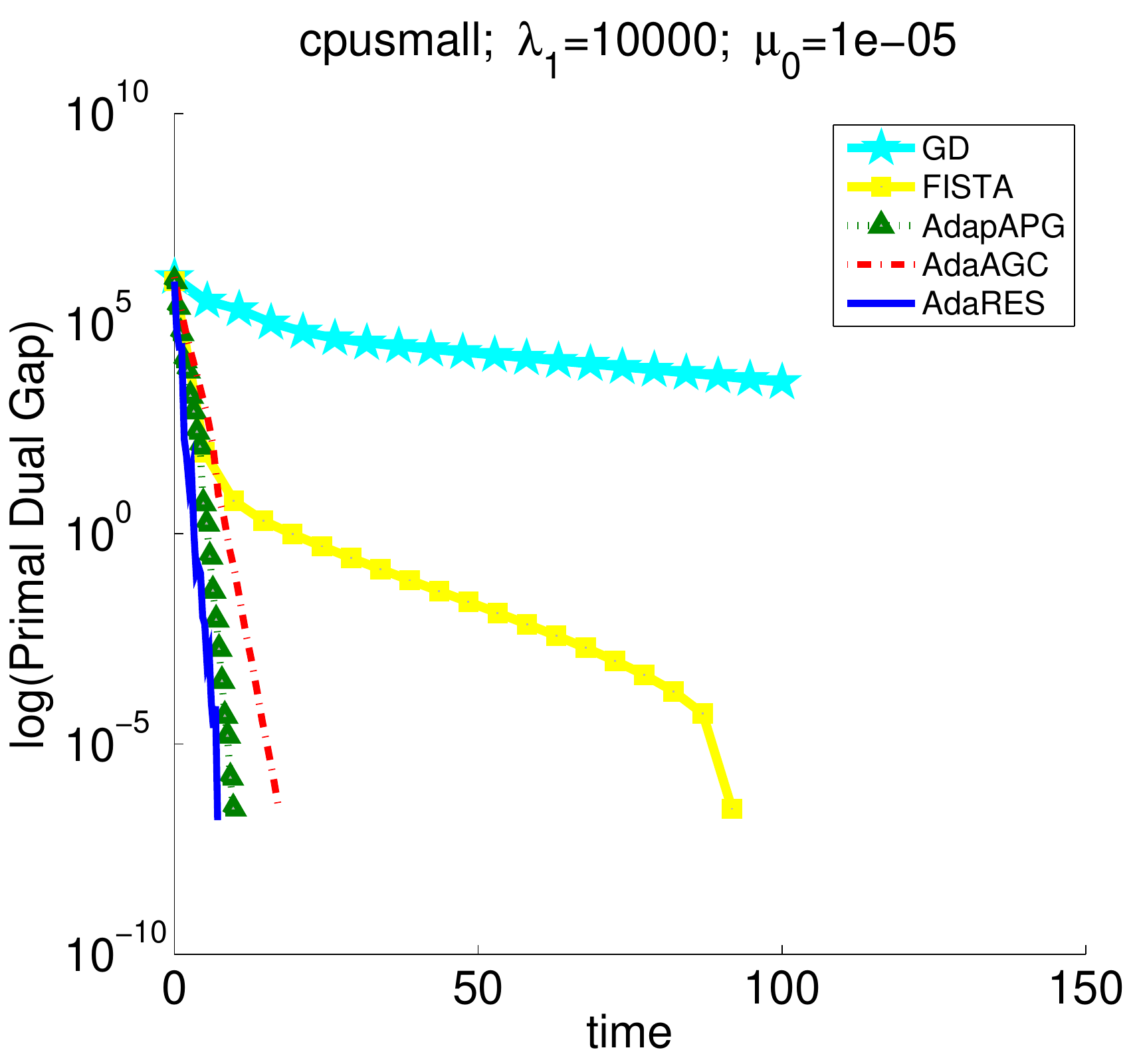}
\includegraphics[width=\sizefigure]{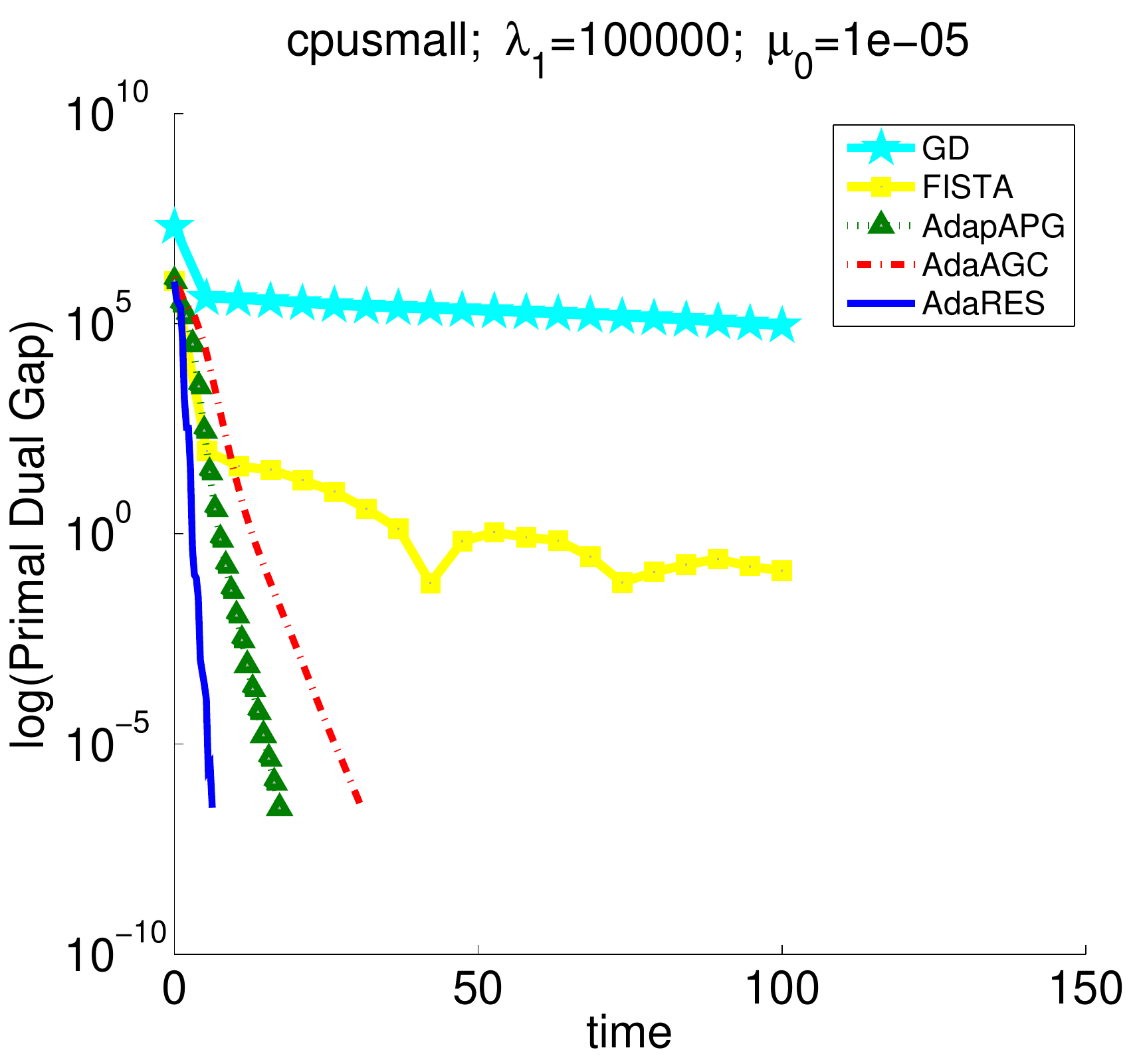}
\includegraphics[width=\sizefigure]{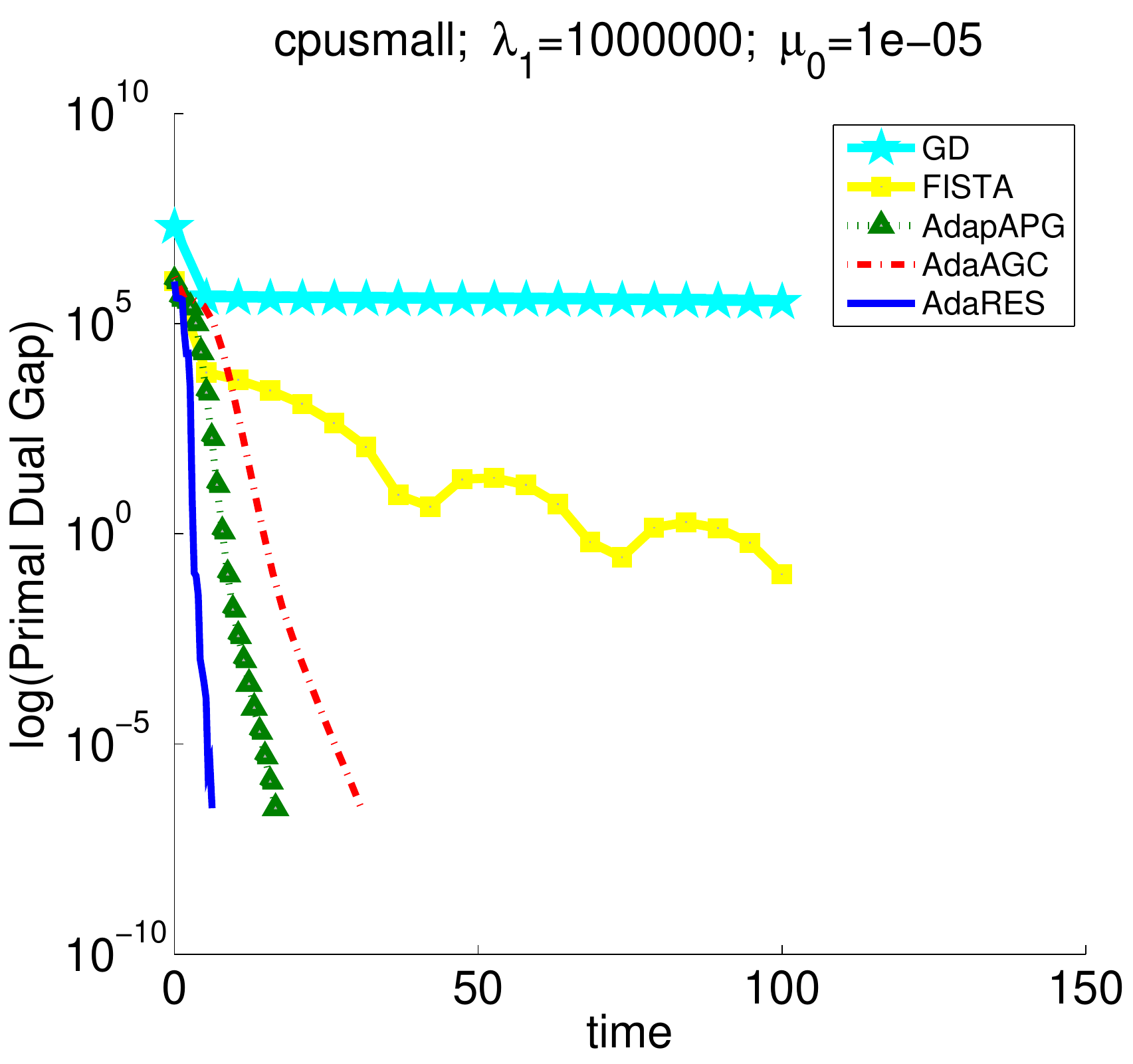}
\caption{Experimental results on the Lasso problem~\eqref{a:lasso} and the dataset cpusmall. Column-wise: we solve the same problem with a different 
a priori on the quadratic error bound. Row-wise: we use the same a priori
on the quadratic error bound but the weight of the 1-norm is varying.
}
\label{fig:cputime}
\end{figure}

\clearpage
}

\afterpage{
\clearpage
\thispagestyle{empty}
\begin{figure}[htbp]
\centering
\includegraphics[width=\sizefigure]{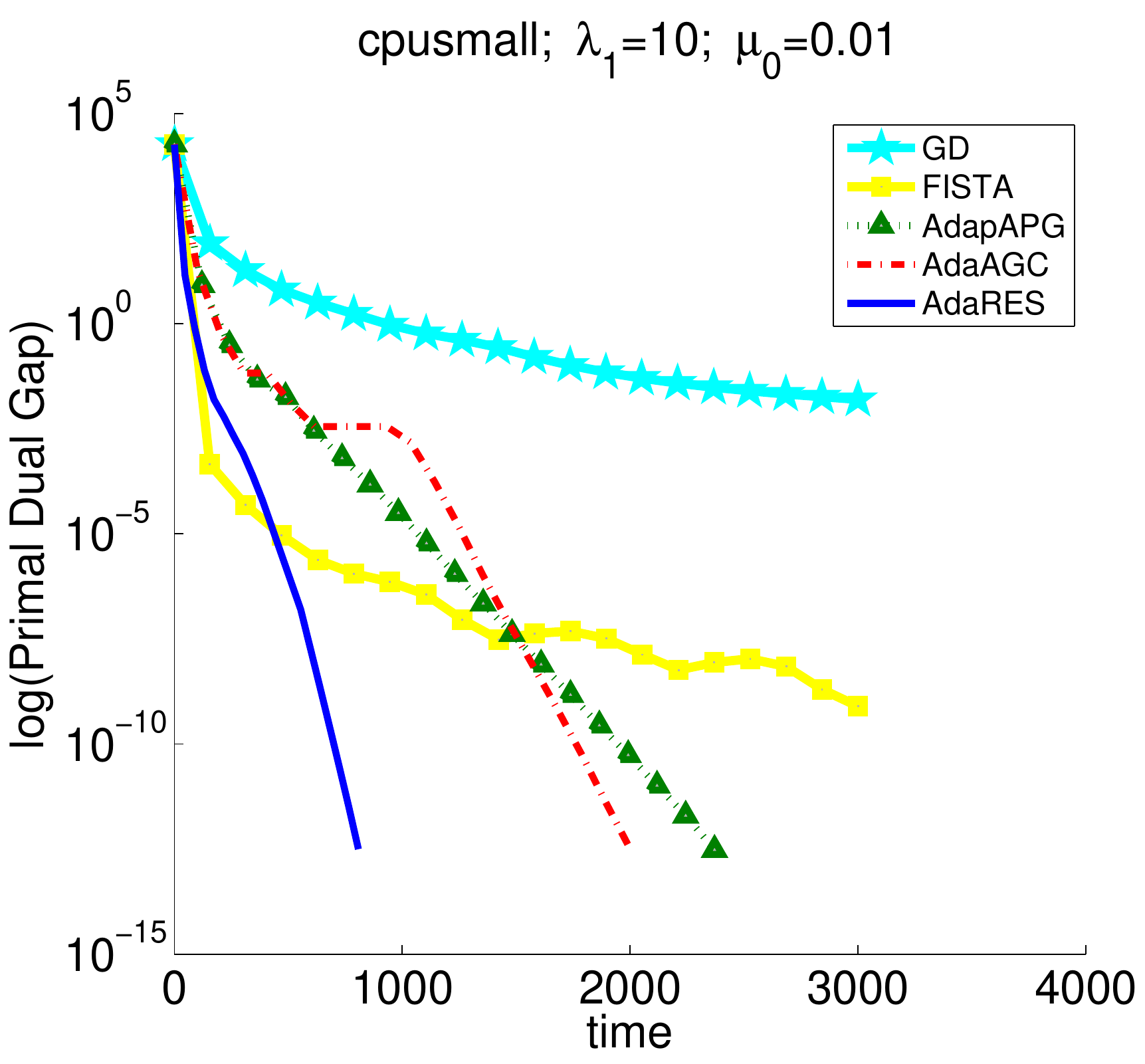}
\includegraphics[width=\sizefigure]{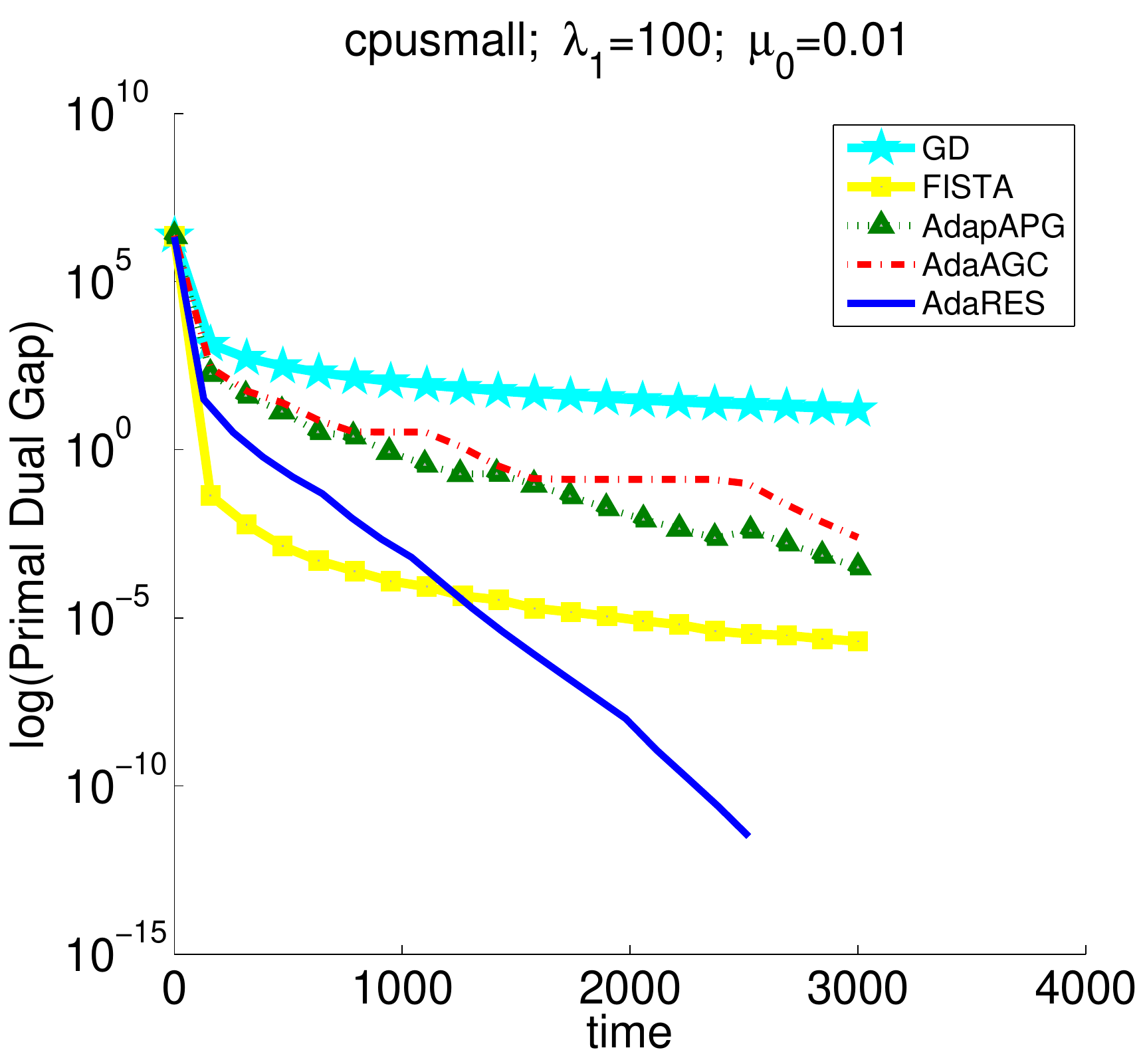}
\includegraphics[width=\sizefigure]{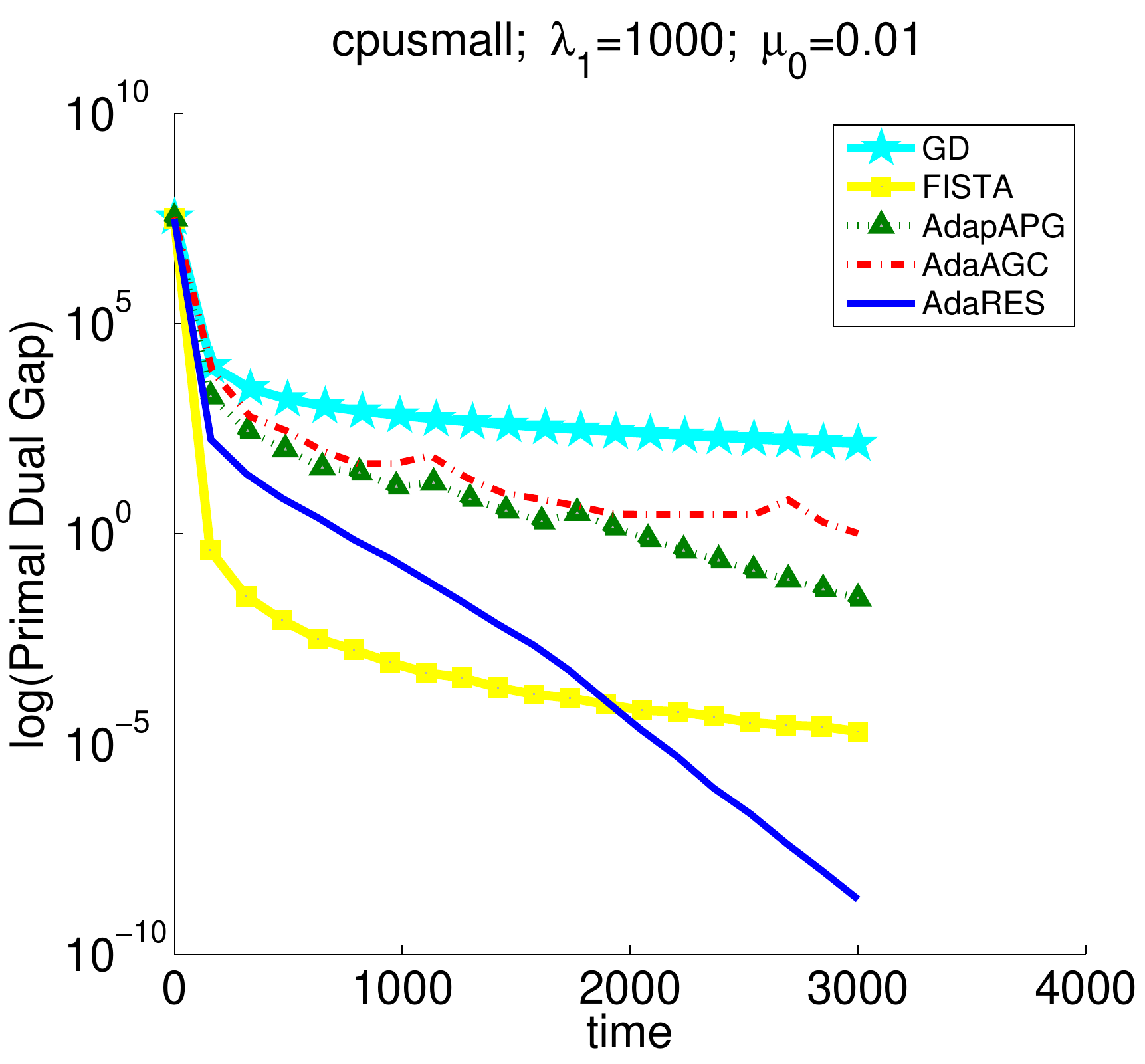}
\includegraphics[width=\sizefigure]{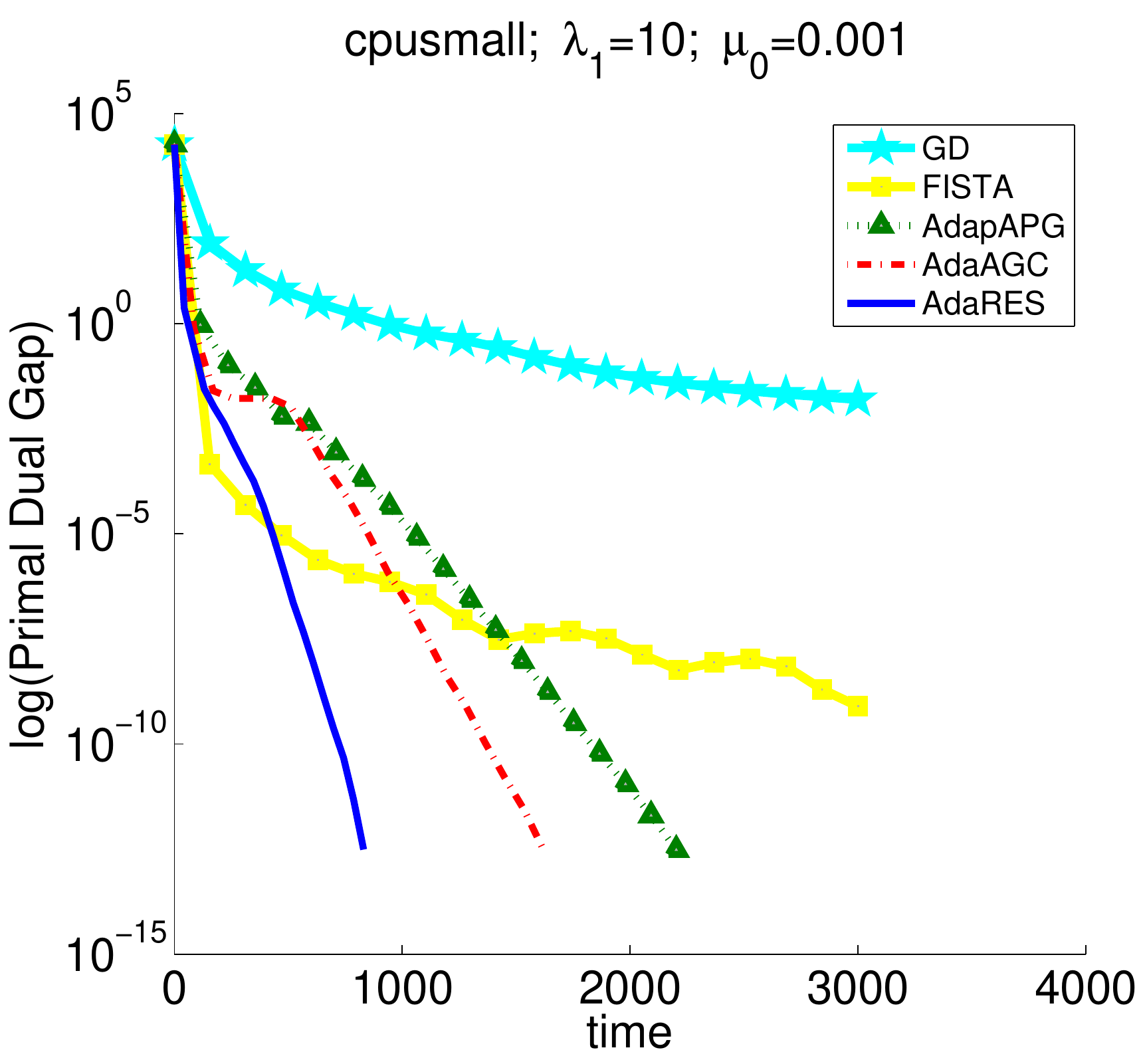}
\includegraphics[width=\sizefigure]{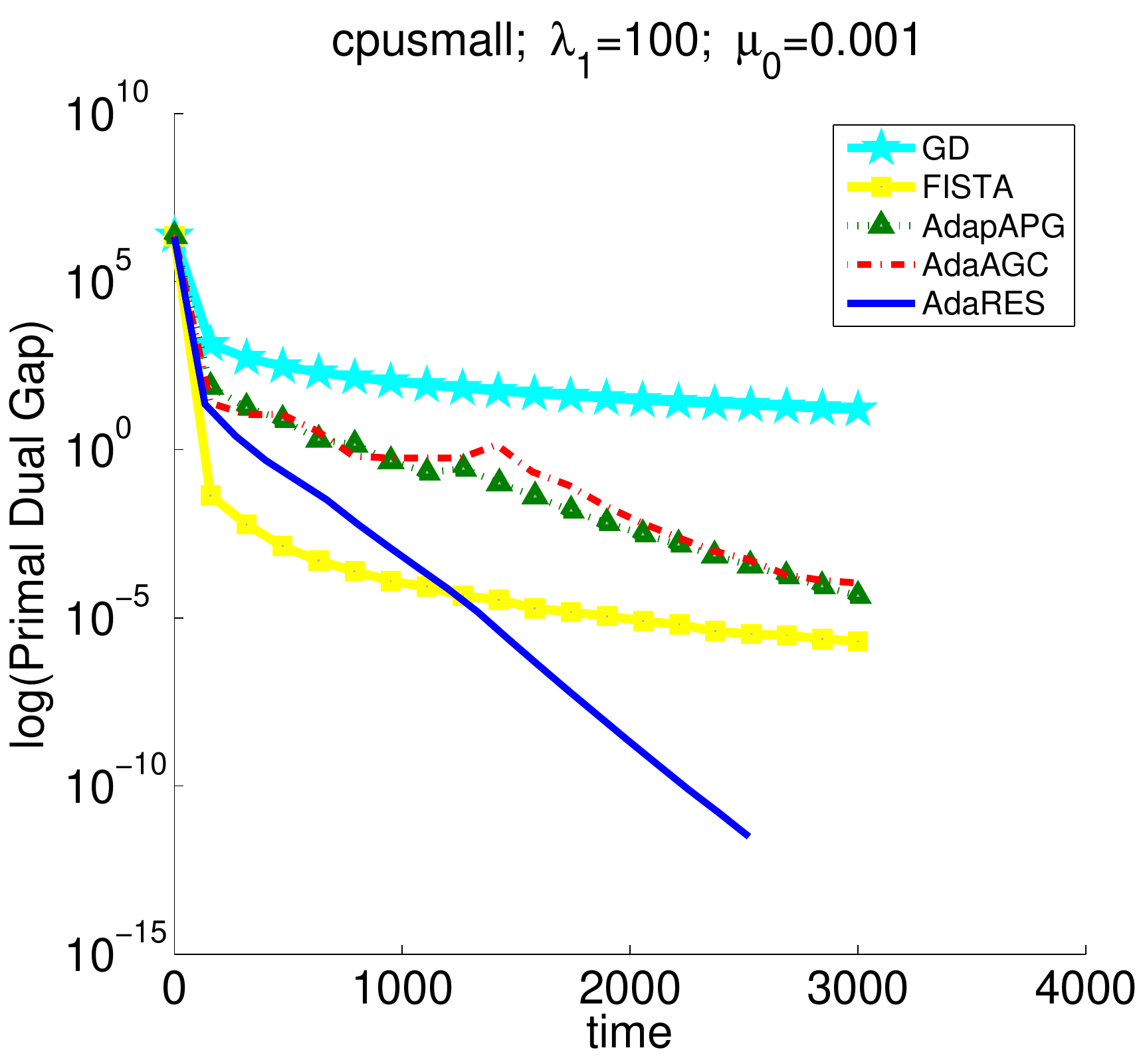}
\includegraphics[width=\sizefigure]{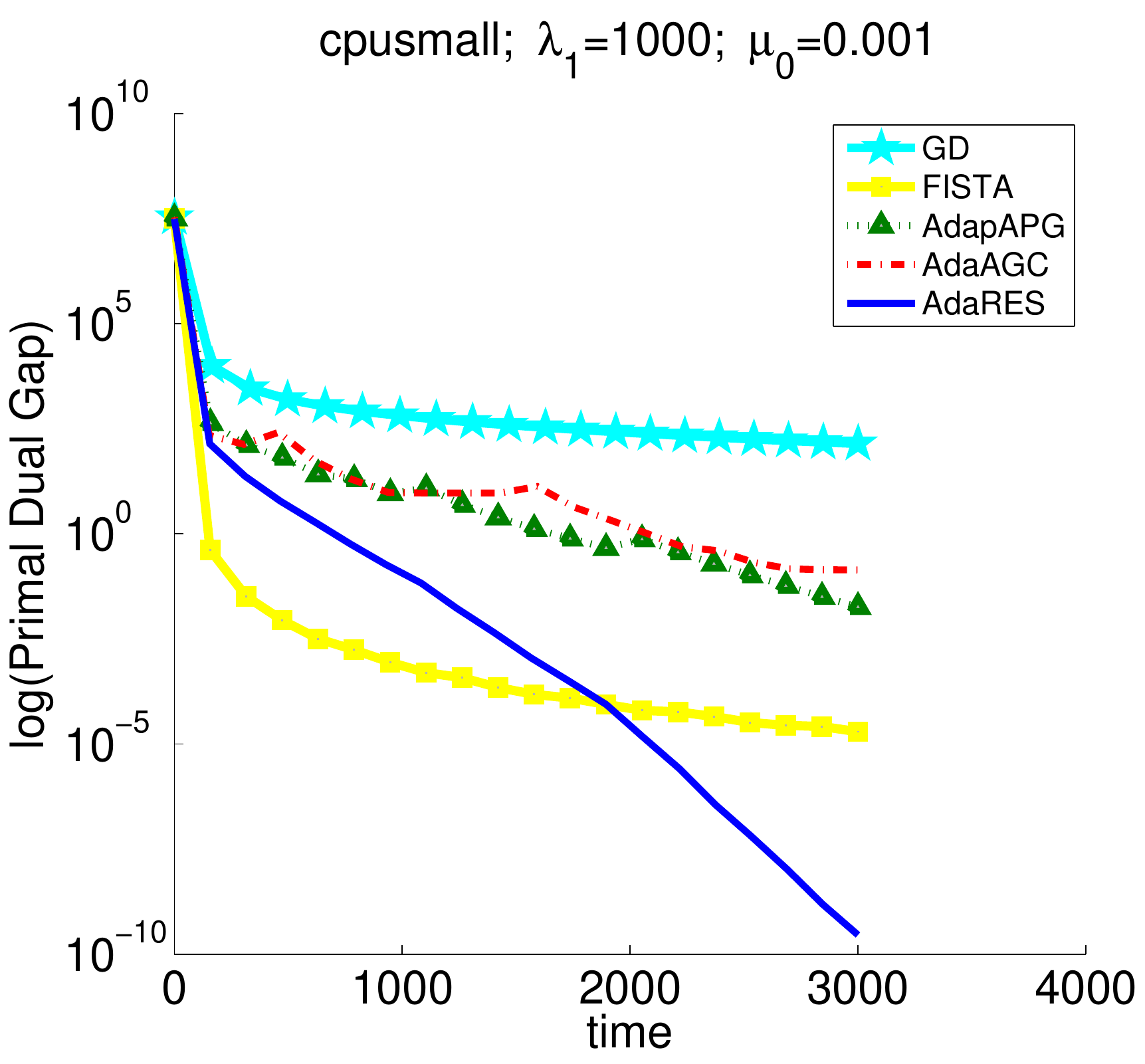}
\includegraphics[width=\sizefigure]{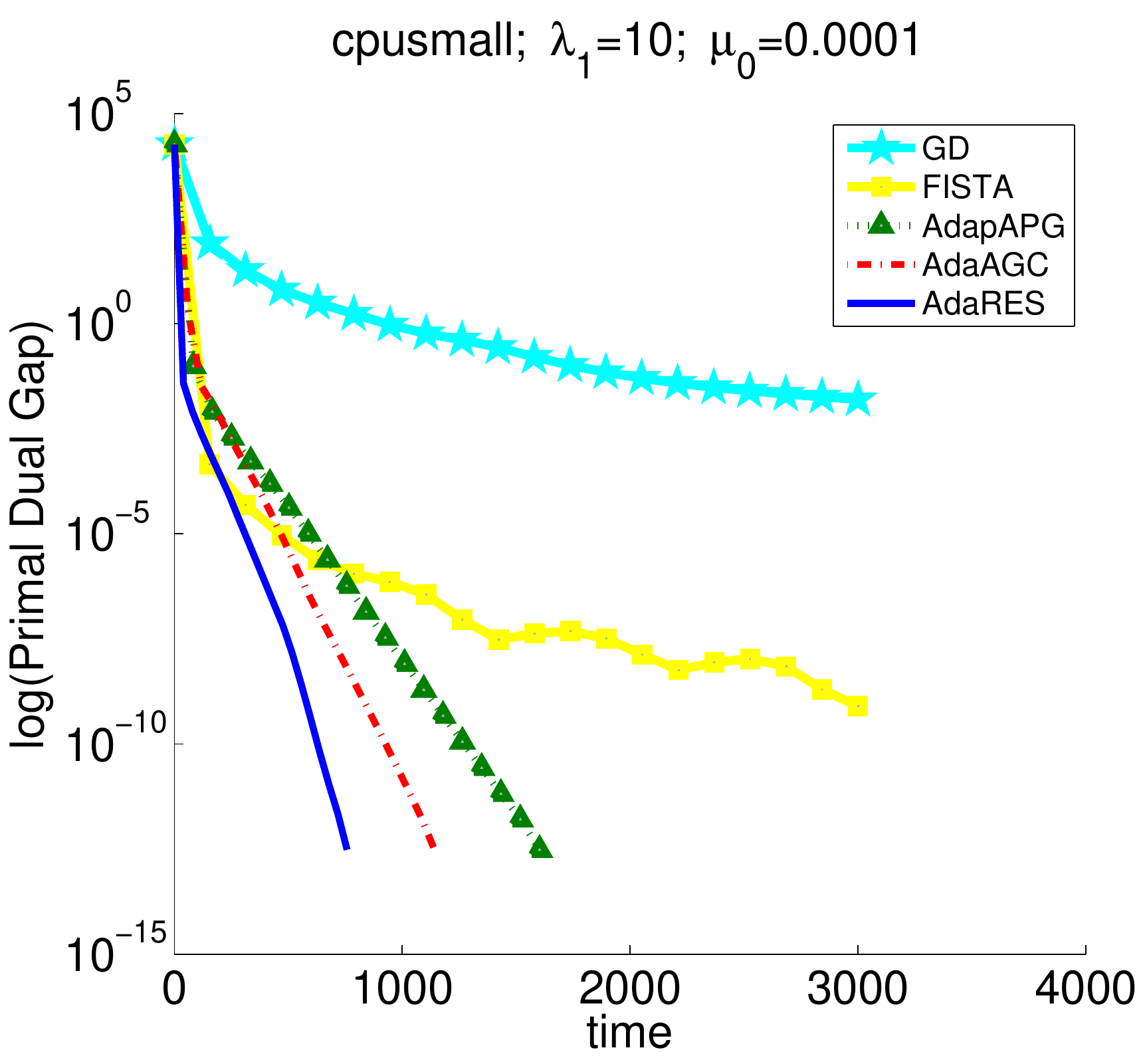}
\includegraphics[width=\sizefigure]{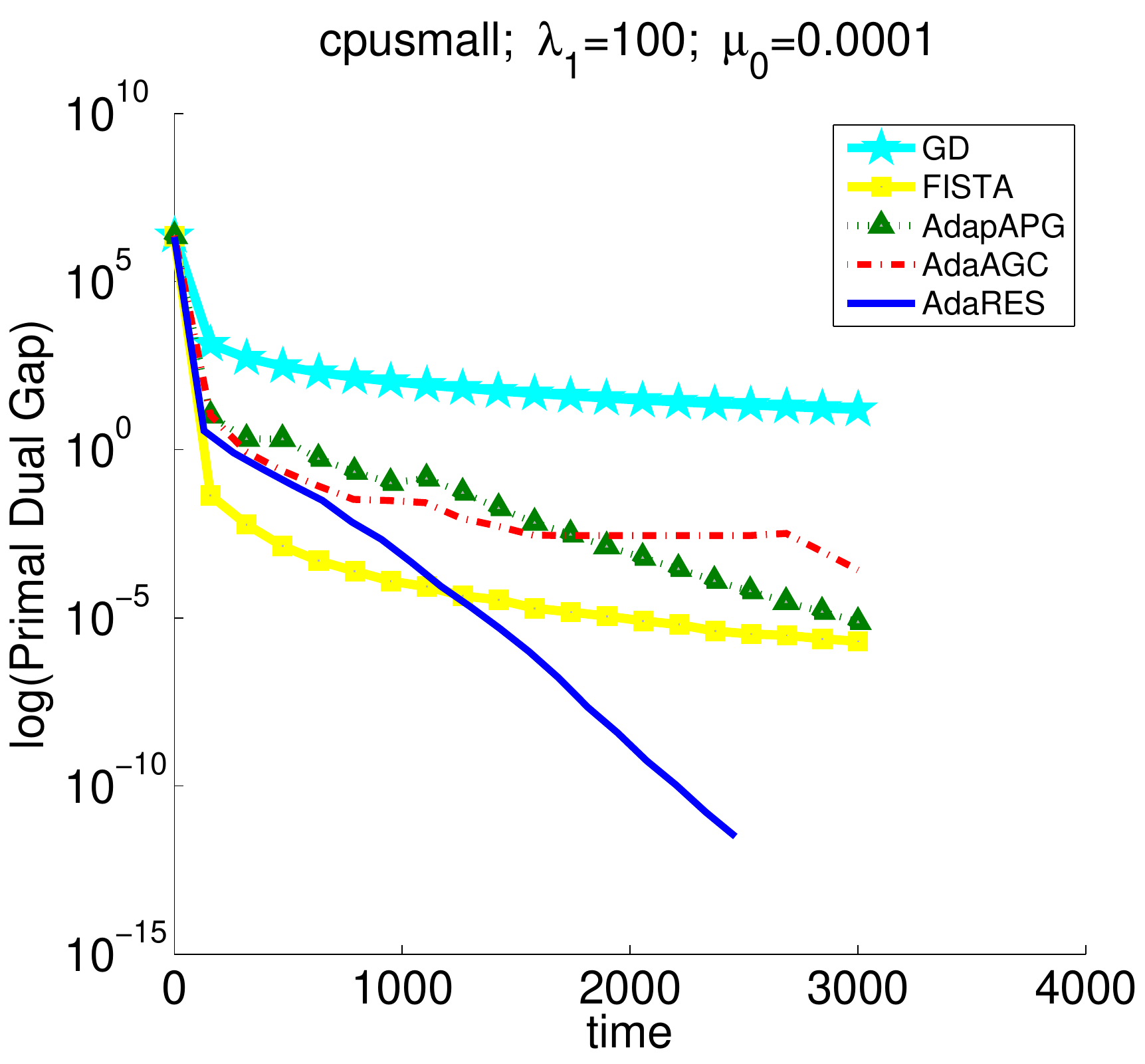}
\includegraphics[width=\sizefigure]{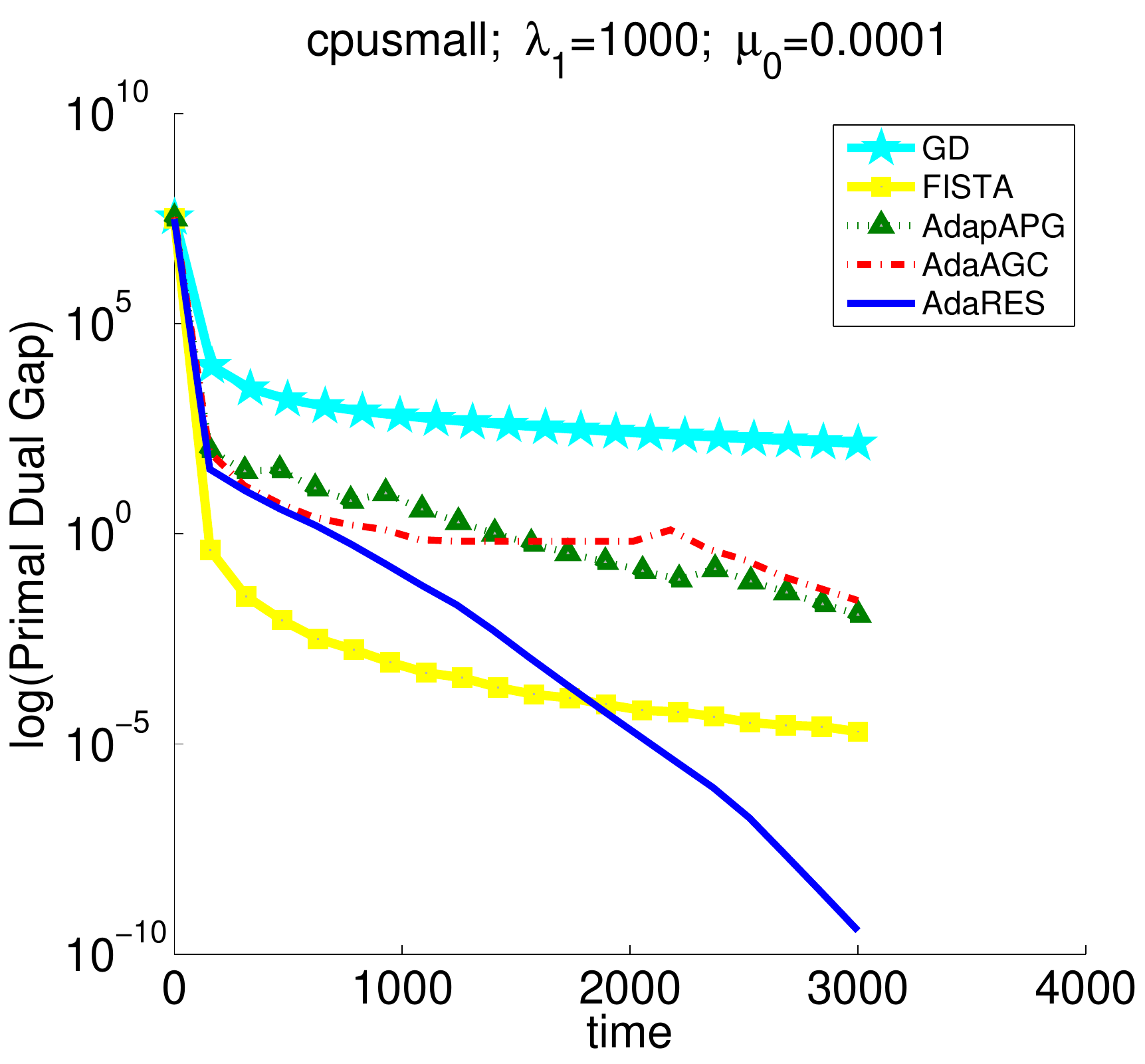}
\includegraphics[width=\sizefigure]{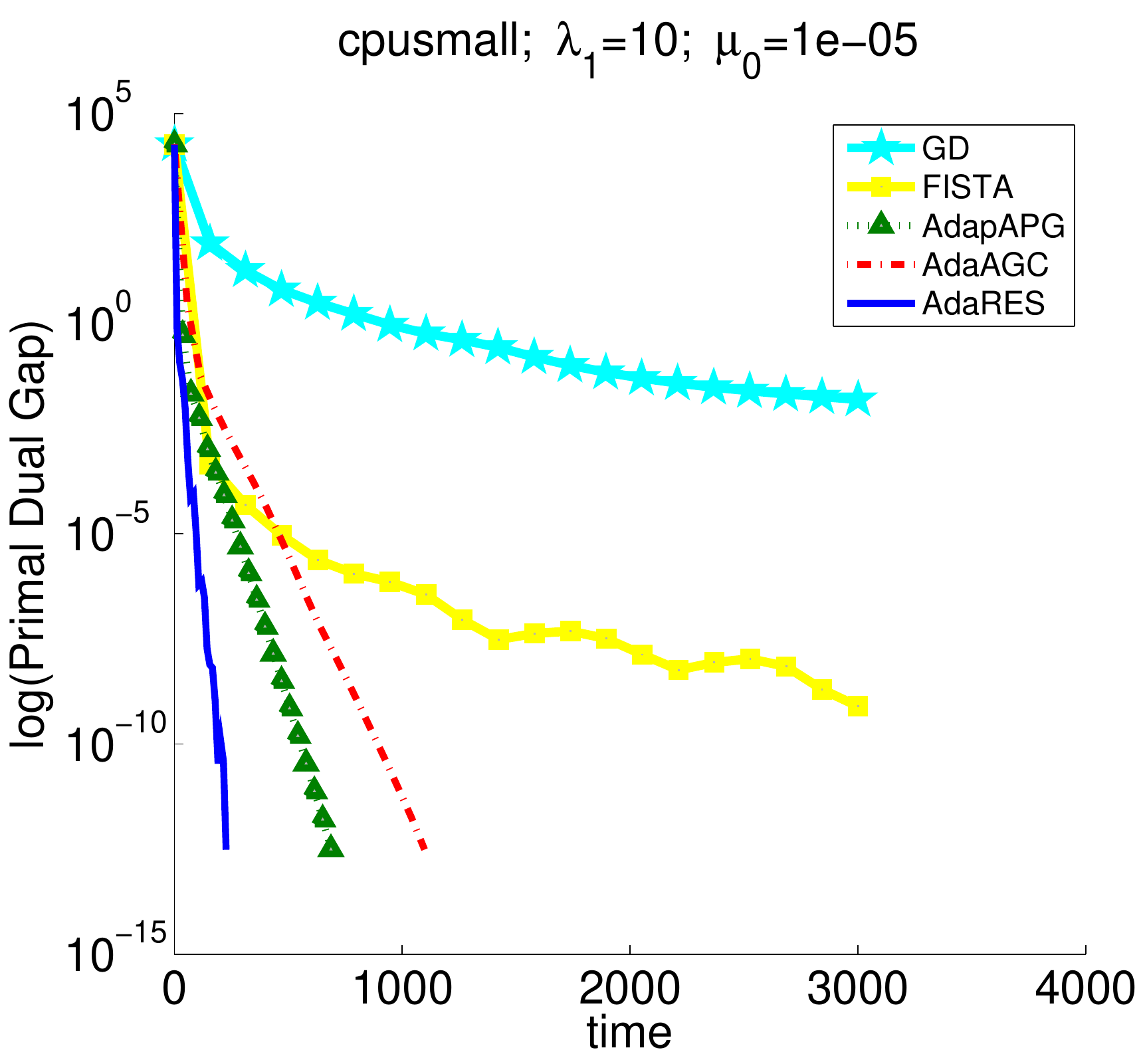}
\includegraphics[width=\sizefigure]{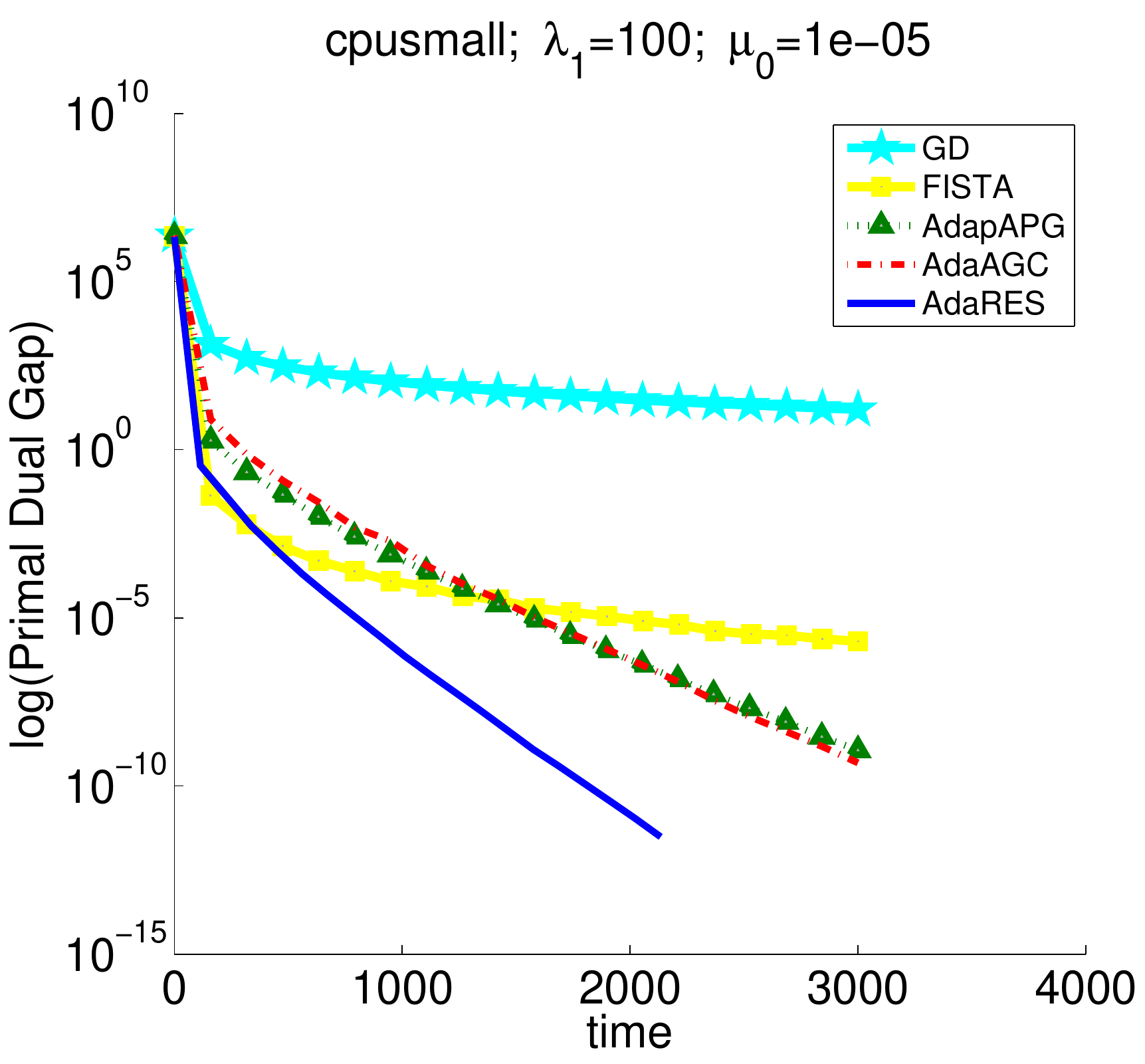}
\includegraphics[width=\sizefigure]{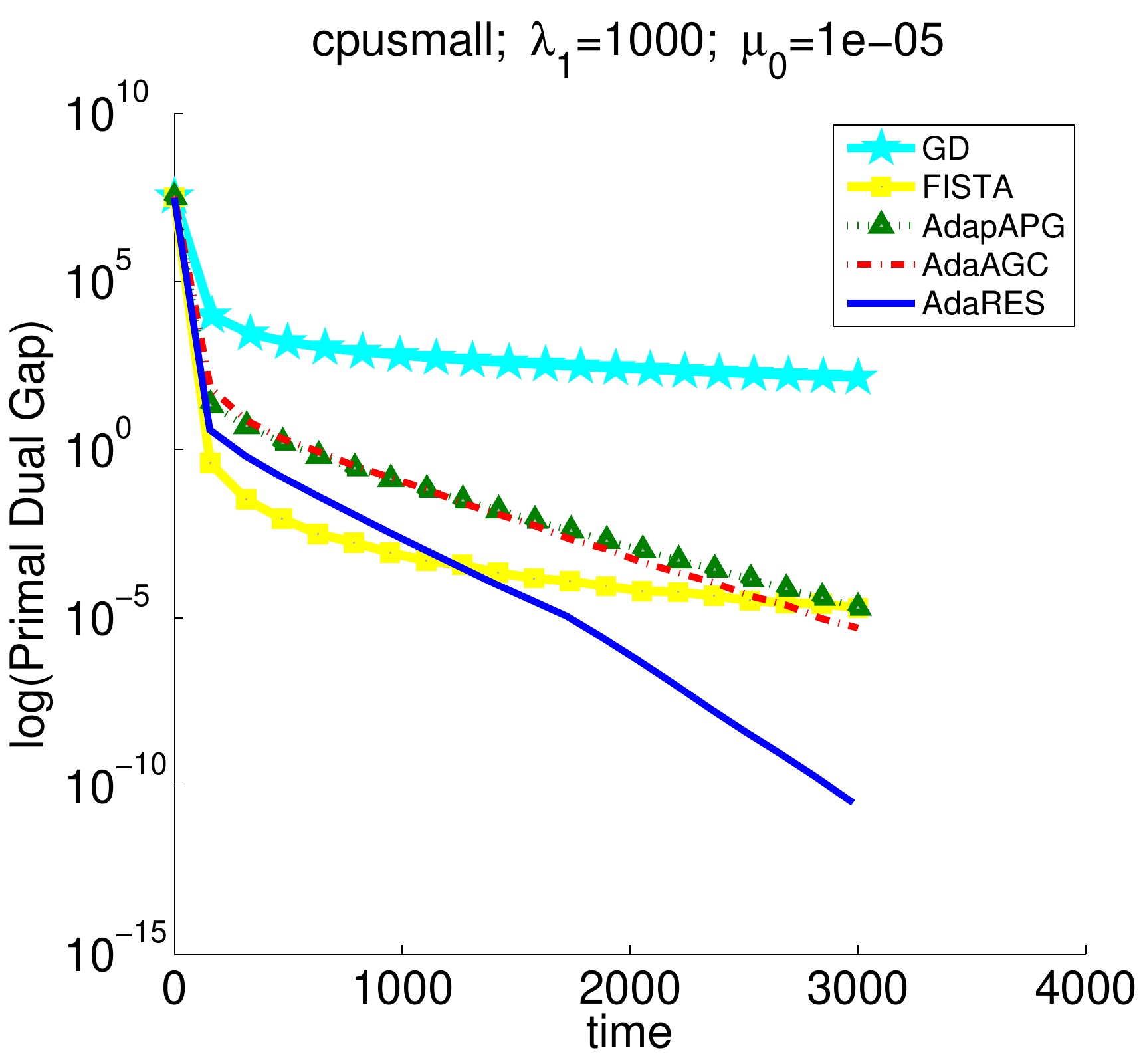}
\includegraphics[width=\sizefigure]{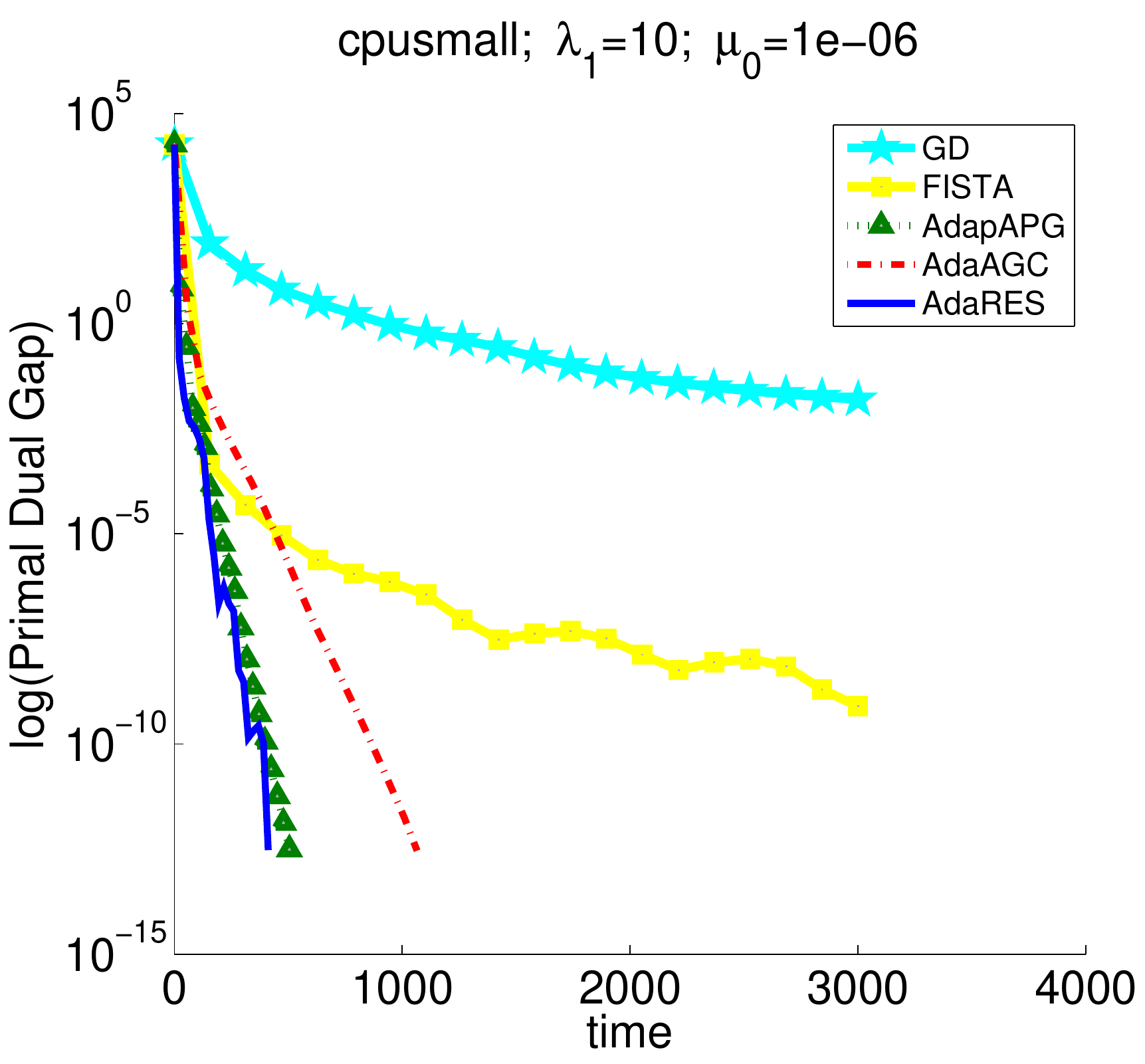}
\includegraphics[width=\sizefigure]{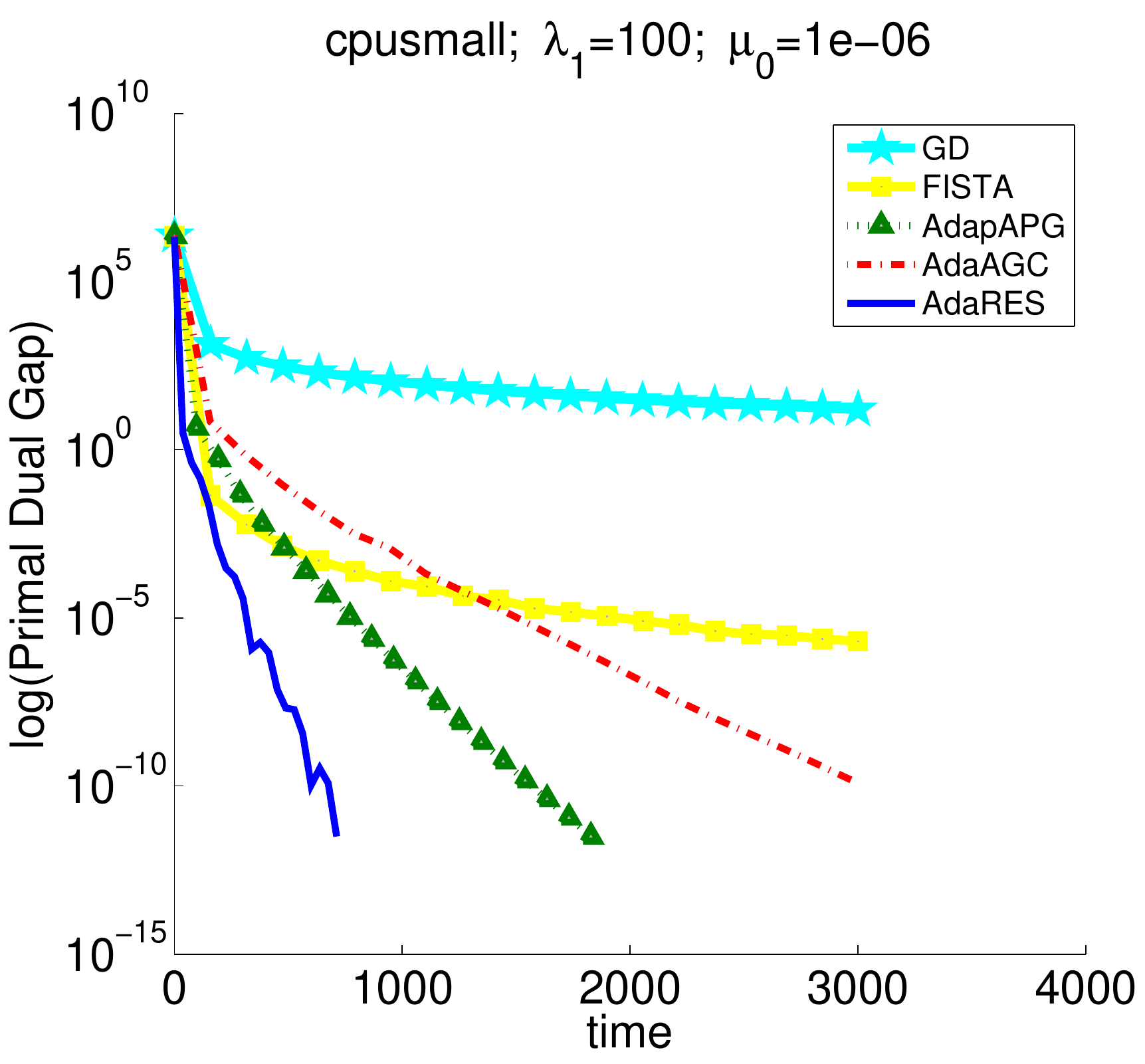}
\includegraphics[width=\sizefigure]{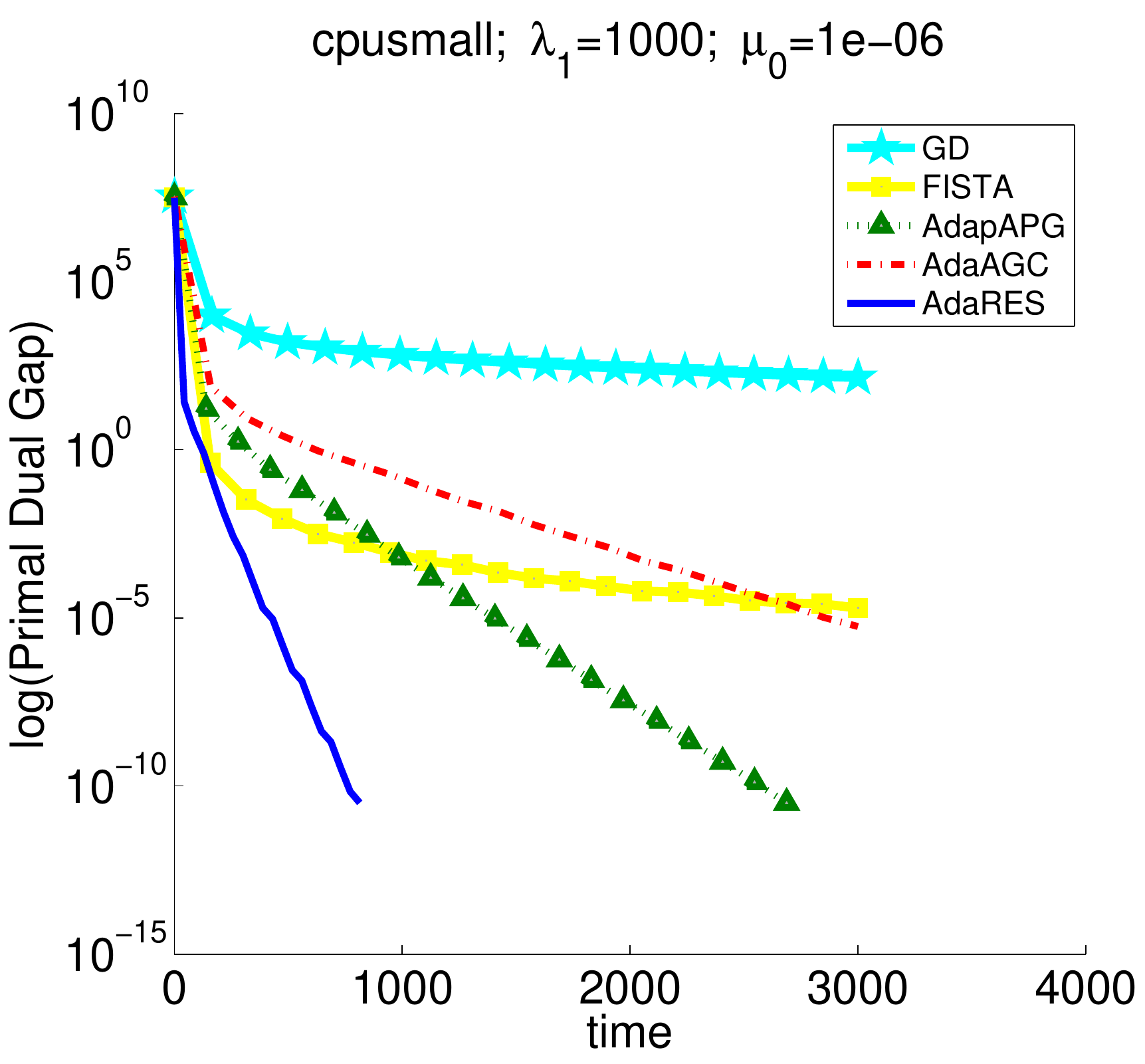}
\caption{Experimental results on the logistic regression problem~\eqref{alr} and the dataset dorothea. Column-wise: we solve the same problem with a different 
a priori on the quadratic error bound. Row-wise: we use the same a priori
on the quadratic error bound but the weight of the regularization is varying.
}
\label{fig:dorothea}
\end{figure}

\clearpage
}

\section{Conclusion}

In this work, we show that global linear convergence is guaranteed if we restart  at any frequency accelerated gradient methods  under a local quadratic growth condition. We then propose an adaptive restarting strategy  based on the decrease of the norm of proximal gradient mapping. Compared with similar methods dealing with unknown local error bound condition number, our algorithm has a better worst-case complexity bound and practical performance. 

Our algorithm can be further extended to a more general setting when H\"olderian error bound~\eqref{a:holderian} is satisfied. 
Another avenue of research is that the accelerated coordinate descent method~\cite{FR:2013approx} 
faces the same issue as full gradient
methods: to get an accelerated rate of convergence, on needs to estimate the
strong convexity coefficient~\cite{lin2014accelerated}.
In~\cite{fercoqqu2016restarting}, an algorithm with fixed periodic restart was proposed.
We may also consider adaptive restart for
the accelerated coordinate descent method, to get more efficiency in large-scale computation.

\appendix

\section{proof of Lemma~\ref{l:thetak}, Lemma~\ref{l:xkzk}, Proposition~\ref{prop:nonblowout} and Proposition~\ref{prop:fista_basic}}
\begin{proof}[proof of Lemma~\ref{l:thetak}]
The equation~
\eqref{arectheta} holds because $\theta_{k+1}$ is the unique positive square root
to the polynomial $P(X) = X^2 + \theta_k^2 X - \theta_k^2$. \eqref{atheradecr} is a direct
consequence of \eqref{arectheta}.

Let us prove \eqref{athetabd} by induction. 
It is clear that $\theta_0 \leq \frac{2}{0+2}$. 
Assume that $\theta_k \leq \frac{2}{k+2}$.
We know that $P(\theta_{k+1}) = 0$ and that $P$ is an increasing function on $[0 , +\infty]$. So we just need to show that $P\big(\frac{2}{k+1+2}\big)\geq 0$.
\begin{align*}
P\Big(\frac{2}{k+1+2}\Big) = \frac{4}{(k+1+2)^2} 
+ \frac{2}{k+1+2}\theta_k^2 - \theta_k^2
\end{align*}
As $\theta_k \leq \frac{2}{k+2}$ and $\frac{2}{k+1+2}-1 \leq 0$,
\begin{align*}
P\Big(\frac{2}{k+1+2}\Big) &\geq \frac{4}{(k+1+2)^2} 
+ \Big(\frac{2}{k+1+2} - 1\Big) \frac{4}{(k+2)^2} \\
 &= \frac{4}{(k+1+2)^2 (k+2)^2} \geq 0.
\end{align*}

For the other inequality,  $\frac{1}{0+1} \leq \theta_0$.
We now assume that $\theta_k \geq \frac{1}{k+1}$ but
that $\theta_{k+1} < \frac{1}{k+1+1}$. 
Remark that $(x \mapsto (1-x)/x^2)$ is strictly decreasing for $x \in (0,2)$. Then, using \eqref{arectheta}, 
we have
\begin{align*} 
(k+1+1)^2 -(k+1+1) <\frac{1-\theta_{k+1}}{\theta_{k+1}^2}  \overset{\eqref{arectheta}}{=} \frac{1}{\theta_k^2} \leq (k+1)^2.
\end{align*}
This is equivalent to
\begin{equation*}
(2 - 1) (k+1) + 1 < 1
\end{equation*}
which obviously does not hold for any $k \geq 0$.
So $\theta_{k+1} \geq \frac{1}{k+1+1}$.
\end{proof}

\begin{proof}[proof of Lemma~\ref{l:xkzk}]
The relation~\eqref{a:xkzk} follows by combining line 3 and line 5   of Algorithm~\ref{FISTA} and Algorithm~\ref{APG}.
\end{proof}

\begin{proof}[proof of Proposition~\ref{prop:nonblowout}]
 FISTA can be written equivalently using only $(x_k, y_k)$ and 
$$
y_k=x_k+\beta_k (x_{k}-x_{k-1})
$$ 
where $\beta_k=\theta_k(\theta_{k-1}^{-1}-1)\leq 1$.
\begin{align*}
F(&x_{k+1})=f(y_k+(x_{k+1}-y_k))+\psi(x_{k+1})
\\&\leq f(y_k)+\<\nabla f(y_k), x_{k+1}-y_k>+\frac{1}{2}\|x_{k+1}-y_k\|_v^2+\psi(x_{k+1})
\\& =f(y_k)+\<\nabla f(y_k), x_k-y_k>+\<\nabla f(y_k), x_{k+1}-x_k>+\frac{1}{2}\|x_{k+1}-y_k\|_v^2+\psi(x_{k+1})
\\&\leq f(x_k)+\<\nabla f(y_k), x_{k+1}-x_k>+\frac{1}{2}\|x_{k+1}-y_k\|_v^2+\psi(x_{k+1})
\end{align*}
The update of FISTA implies the existence of $\xi \in \partial \psi(x_{k+1})$ such that
$$
\nabla f(y_k)+v\cdot (x_{k+1}-y_k)+\xi=0.
$$
Therefore,
\begin{align*}F(x_{k+1}) 
&\leq f(x_k)+\<\xi+v\cdot (x_{k+1}-y_k), x_k-x_{k+1}>+\frac{1}{2}\|x_{k+1}-y_k\|_v^2+\psi(x_{k+1})
\\&\leq F(x_k) +\<v\cdot (x_{k+1}-y_k), x_k-x_{k+1}>+\frac{1}{2}\|x_{k+1}-y_k\|_v^2
\\&=F(x_k)+\frac{1}{2}\|x_k-y_k\|_v^2-\frac{1}{2}\|x_k-x_{k+1}\|_v^2
\\&=F(x_k)+\frac{\beta_k^2}{2}\|x_k-x_{k-1}\|_v^2-\frac{1}{2}\|x_k-x_{k+1}\|_v^2
\\&\leq F(x_k)+\frac{1}{2}\|x_k-x_{k-1}\|_v^2-\frac{1}{2}\|x_k-x_{k+1}\|_v^2.
\end{align*}
By applying the last inequality recursively we get $F(x_k)\leq F(x_0)$.

Next consider the iterates of APG.
By~\cite[Proposition 1]{tseng2008accelerated}, for any $x\in \R^n$,
the iterates of APG satisfy the following property:
$$
F(x_{k+1})-F(x)+\frac{\theta_k^2}{2}\|x-z_{k+1}\|^2_v
\leq (1-\theta_{k})\left(F(x_{k})-F(x)\right)+\frac{\theta_k^2}{2}\|x-z_{k}\|^2_v.
$$
By taking $x=x_k$ we obtain:
$$
F(x_{k+1})-F(x_k)
\leq \frac{\theta_k^2}{2}\|x_k-z_{k}\|^2_v-\frac{\theta_k^2}{2}\|x_k-z_{k+1}\|^2_v
$$
The update of APG implies:
$$
x_{k+1}=(1-\theta_k)x_k+\theta_k z_{k+1},
$$
which yields 
$$
x_{k+1}-z_{k+1}=(1-\theta_k)(x_k-z_{k+1}).
$$
Therefore,  for any $k\geq 1$,
\begin{align}\label{a:FISTAnonblowout}
F(x_{k+1})-F(x_k)
\leq \frac{\theta_k^2}{2}\|x_k-z_{k}\|^2_v-\frac{\theta_k^2}{2(1-\theta_k)^2}\|x_{k+1}-z_{k+1}\|^2_v
\end{align}
Applying~\eqref{a:FISTAnonblowout} recursively and using the decreasing property~\eqref{atheradecr} we obtain:
$$
F(x_{k+1})-F(x_0)\leq -\frac{1}{2}\|x_0-z_1\|_v^2\leq 0.
$$
\end{proof}

\begin{proof}[proof of Proposition~\ref{prop:fista_basic}]
We first prove~\eqref{eq:itcompl_fista}  for FISTA. Let any $x_\star\in\cX_\star$.
Since $z_{k+1} = z_{k} + \theta_{k}^{-1}(x_{k+1} - y_{k}) = \theta_{k}^{-1} x_{k+1} - (\theta_{k}^{-1} - 1) x_{k}$, Inequality \eqref{eq:itcompl_fista} is a simple consequence of Lemma~4.1 in~\cite{beck2009fista} (Note that we have a shift of indices for our variables $(x_{k+1}, z_{k+1})$ vs $(x_k, u_k)$ in~\cite{beck2009fista}). 

To prove~\eqref{eq:itcompl_fista}  for APG, we notice that it is a special case of  Theorem~3 in~\cite{FR:2013approx}. 
\end{proof}

\section*{Acknowledgement}

The first author's work was supported by the
EPSRC  Grant EP/K02325X/1
{\em Accelerated Coordinate Descent Methods for Big Data Optimization},
the Centre for Numerical Algorithms and Intelligent Software (funded by EPSRC grant EP/G036136/1 and the Scottish Funding Council), the Orange/Telecom ParisTech think tank Phi-TAB and the ANR grant ANR-11-LABX-0056-LMH, LabEx LMH as part of the Investissement d'avenir project.  The second author's work was
supported  by Hong Kong Research Grants Council Early Career Scheme 27303016.
This research was conducted using the HKU Information Technology Services research computing facilities that are supported in part by the Hong Kong UGC Special Equipment Grant (SEG HKU09).

\bibliographystyle{alpha}
\bibliography{../literature}

\end{document}